\newcommand{\subtitle}[1]{%
  \posttitle{%
    \par\end{center}
    \begin{center}\Large#1\end{center}
    \vskip0.5em}%
}
\def\blfootnote{\xdef\@thefnmark{}\@footnotetext}
\date{\today%
    \protect\blfootnote{\copyright{\ N.~Heuer, C.~L\"oh 2019}. 
    This work was supported by the CRC~1085 \emph{Higher Invariants} 
    (Universit\"at Regensburg, funded by the DFG).
    \\
    MSC~2010 classification: 57N65, 57M07, 20J05}}
\long\def\forget#1{}
\def\args{\;\cdot\;}
\newtheorem{thm}{Theorem}[section]
\newtheorem{lemma}[thm]{Lemma}
\newtheorem{prop}[thm]{Proposition}
\newtheorem{corr}[thm]{Corollary}
\newtheorem{claim}[thm]{Claim}
\theoremstyle{remark}
\newtheorem{rmk}[thm]{Remark}
\theoremstyle{definition}
\newtheorem{defn}[thm]{Definition}
\newtheorem{exmp}[thm]{Example}
\newtheorem{setup}[thm]{Setup}
\theoremstyle{theorem}
\newtheorem{theorem}{Theorem}
\newenvironment{equ*}[1]{\begin{IEEEeqnarray*}{#1}}{\end{IEEEeqnarray*}}
\newcommand{\R}{\mathbb{R}}
\newcommand{\Q}{\mathbb{Q}}
\newcommand{\Z}{\mathbb{Z}}
\newcommand{\N}{\mathbb{N}}
\DeclareMathOperator{\scl}{scl}
\DeclareMathOperator{\cl}{cl}
\DeclareMathOperator{\rk}{rk}
\newcommand{\eurm}{\mathrm{eu}}
\def\eub#1#2{%
  {}^{#2}\eurm_b^{#1}}
\def\eu#1#2{%
  {}^{#2}\eurm^{#1}}
\newcommand{\orrm}{\mathrm{or}}
\def\orb#1#2{%
  {}^{#2}\orrm_b^{#1}}
\DeclareMathOperator{\Eurm}{Eu}
\newcommand{\Homeo}{\mathrm{Homeo}^+}
\newcommand{\sign}{\mathrm{sign}}
\newcommand{\Or}{\mathrm{Or}}
\newcommand{\gvrm}{\overline{\mathrm{gv}}}
\DeclareMathOperator{\PL}{PL}
\DeclareMathOperator{\rot}{rot}
\newcommand{\alt}{\mathrm{alt}}
\newcommand{\Arm}{\mathrm{A}}
\DeclareMathOperator{\SV}{SV}
\DeclareMathOperator{\vol}{vol}
\DeclareMathOperator{\fillc}{fill}
\DeclareMathOperator{\sfill}{sfill}
\DeclareMathOperator{\id}{id}
\DeclareMathOperator{\Hom}{Hom}
\DeclareMathOperator{\map}{map}
\newcommand{\hMCG}[1]{\mathcal{H}_{#1}}
\newcommand{\MCG}[1]{\mathcal{M}_{#1}}
\newcommand{\inj}{\hookrightarrow}
\newcommand{\col}{\colon}
\def\qand{\quad\text{and}\quad}
\def\fa#1{%
  \forall_{#1}\;\;\;}
\def\exi#1{%
  \exists_{#1}\;\;\;}
\def\longrightarrow{\to}
\def\longmapsto{\mapsto}
\def\varepsilon{\epsilon}
\title{The spectrum of simplicial volume}
\author{Nicolaus Heuer, Clara L\"oh}
\begin{document}

\maketitle

\begin{abstract}
New constructions in group homology allow us to manufacture high-dimensional manifolds with controlled simplicial volume. We prove that for every dimension bigger than~$3$ the set of simplicial volumes of orientable closed connected manifolds is dense in~$\R_{\geq 0}$. In dimension~$4$ we prove that every non-negative rational number is the simplicial volume of some orientable closed connected $4$-manifold.
Our group theoretic results relate stable commutator length to the $l^1$-semi-norm of certain singular homology classes in degree~$2$. The output of these results is translated into manifold constructions using cross-products and Thom realisation.
\end{abstract}

\section{Introduction}
The simplicial volume~$\| M \|$ of an orientable closed connected (occ) manifold~$M$ is a homotopy invariant that captures the complexity of representing fundamental classes
by singular cycles with real coefficients (see Section~\ref{sec:simvol} for a precise definition and basic terminology).
Simplicial volume is known to be positive in the
presence of enough negative
curvature~\cite{vbc,thurston,inoueyano,lafontschmidt} and known to vanish in the
presence of enough amenability~\cite{vbc,ivanov,yano,bucherconnelllafont}.
Moreover, it provides a topological lower bound for the minimal Riemannian volume (suitably normalised) in the case of smooth manifolds~\cite{vbc}.

Until now, for large dimensions~$d$, very little was known about the precise structure of the set~$\SV(d) \subset \R_{\geq 0}$ of simplicial volumes of occ $d$-manifolds. The set~$\SV(d)$ is countable and closed
under addition (Remark~\ref{rem:svsetbasics}). 
However, the set of simplicial volumes is fully understood only in dimensions $2$ and~$3$ with $\SV(2) = \N[4]$ (Example \ref{exa:gap2}) and $\SV(3)= \N[\frac{\vol(M)}{v} \mid M]$, where $M$ ranges over all complete finite-volume hyperbolic $3$-manifolds with toroidal boundary and where $v>0$ is a constant (Example \ref{exa:gap3}).

This reveals that there is a \emph{gap} of simplicial volume in dimensions $2$ and~$3$: For $d \in \{2,3\}$ there is a constant $C_d>0$ such that the simplicial volume of an occ $d$-manifold either vanishes or is at least~$C_d$.
It was an open question~\cite[p.~550]{sambusetti} whether such a gap exists in higher dimensions.
For example, until now the lowest known simplicial volume of an occ $4$-manifold has been~$24$~\cite{bucherprodsurf} (Example \ref{exa:gap4}).

In the present paper, we show that dimensions $2$ and $3$ are the \emph{only} dimensions with such a gap.
\begin{theorem}[no-gap; Section~\ref{subsec:proofA}]\label{theorem:nogap}
Let $d \geq 4$ be an integer. For every $\epsilon > 0$ there is an orientable closed connected $d$-manifold $M$ such that $\protect{0<\| M \| \leq \epsilon}$.
Hence, the set of simplicial volumes of orientable closed connected $d$-mani\-folds is dense in~$\R_{\geq 0}$.
\end{theorem}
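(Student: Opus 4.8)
The plan is to reduce Theorem~\ref{theorem:nogap} to a statement in group homology and then geometrise it. The group-homological input — which I expect to be the main obstacle and the genuinely new ingredient — is: for every $\epsilon>0$ there is a finitely presented group $G$ and a class $\alpha\in H_2(G;\Z)$ with $0<\|\alpha\|_1\le\epsilon$, where $\|\cdot\|_1$ denotes the $l^1$-semi-norm on singular homology. Producing such $\alpha$ is delicate, precisely because the $l^1$-semi-norm of an integral degree-$2$ class can be far smaller than the simplicial volume of any surface representing it; one accesses these small values through a precise comparison between $\|\cdot\|_1$ on $H_2$ and stable commutator length (via Bavard duality together with the duality between $\|\cdot\|_1$ and the semi-norm on bounded cohomology $H^2_b$), combined with the fact that $\scl$ — in contrast to the simplicial volume of surfaces — has no gap and attains arbitrarily small positive values on suitable finitely presented groups (for instance on free products of finite cyclic groups).

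Granting this, I would raise the dimension by cross products while keeping the semi-norm small but nonzero. Fix a closed hyperbolic surface $\Sigma$ and, for the odd case, a closed hyperbolic $3$-manifold $N$; both have positive simplicial volume. Put $\alpha_4:=\alpha\times[\Sigma]\in H_4(G\times\pi_1\Sigma;\Z)$ and $\alpha_5:=\alpha\times[N]\in H_5(G\times\pi_1 N;\Z)$. Submultiplicativity of the $l^1$-semi-norm under cross products, $\|\beta\times\gamma\|_1\le\binom{p+q}{p}\|\beta\|_1\|\gamma\|_1$, bounds $\|\alpha_4\|_1$ and $\|\alpha_5\|_1$ by a constant multiple of $\|\alpha\|_1$, hence makes them as small as we please. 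For the reverse inequality one passes to bounded cohomology: since $\|\alpha\|_1>0$, duality supplies $\omega\in H^2_b(G;\R)$ with $\langle\omega,\alpha\rangle\ne0$, and likewise bounded classes detecting $[\Sigma]$ and $[N]$; their cross products lie in $H^4_b$ resp.\ $H^5_b$ and pair nontrivially with $\alpha_4$ resp.\ $\alpha_5$ by multiplicativity of the Kronecker product under cross products, so $\|\alpha_4\|_1,\|\alpha_5\|_1>0$.

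Next I would realise these classes by manifolds with the \emph{correct} simplicial volume. Since $4,5\le6$, Thom realisation represents $\alpha_4$ (resp.\ $\alpha_5$) by a smooth occ manifold $W$ together with a map to $BG'$ carrying $[W]$ to the class, where $G'=G\times\pi_1\Sigma$ (resp.\ $G\times\pi_1 N$). As $G'$ is finitely presented and $\dim W\ge4$, standard surgery below the middle dimension — add $1$-handles to make $\pi_1(W)\to G'$ surjective, then surger embedded circles generating the (normally finitely generated) kernel — turns $W$ into an occ manifold $M$ with $\pi_1(M)\cong G'$ whose classifying map $c$ still satisfies $c_*[M]=\alpha_4$ (resp.\ $\alpha_5$); the surgeries do not disturb the image of the fundamental class, being performed along circles that are null-homotopic in $BG'$. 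Now $c$ induces an isometric isomorphism on bounded cohomology (Gromov's mapping theorem), so $\|M\|=\|c_*[M]\|_1$ equals the small positive number produced above — whereas a bare push-forward argument would only give the useless bound $\|M\|\ge\|c_*[M]\|_1$, which is why arranging $\pi_1(M)\cong G'$ is essential. This settles dimensions $4$ and $5$. For $d\ge6$ I avoid high-degree Thom realisation altogether: take the $4$-dimensional manifold $M$ just built and form $M\times H$ with $H$ a closed hyperbolic $(d-4)$-manifold; the same two estimates (cross-product submultiplicativity for the upper bound, cross products of bounded classes for positivity) give $0<\|M\times H\|\le\epsilon$, provided $\|\alpha\|_1$ was chosen small enough at the outset — there is no circularity here, since for fixed $d$ the relevant constants are fixed before $G$ and $\alpha$ are chosen.

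Finally, density is formal: by Remark~\ref{rem:svsetbasics}, $\SV(d)$ is a submonoid of $(\R_{\ge0},+)$ (it contains $0=\|S^d\|$ and, via connected sum, is closed under addition, $\|M\# M'\|=\|M\|+\|M'\|$ for $d\ge3$), and we have just shown it contains positive elements arbitrarily close to $0$; a submonoid of $\R_{\ge0}$ with that property is dense in $\R_{\ge0}$, because $\lfloor t/\|M\|\rfloor\cdot\|M\|\in\SV(d)$ approximates any $t\ge0$ to within $\|M\|$. This proves Theorem~\ref{theorem:nogap}.
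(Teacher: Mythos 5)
The reduction you give is correct and is essentially the paper's own: cross the small integral $2$-class with the fundamental class of a closed hyperbolic surface (resp.\ a closed hyperbolic $(d-2)$-manifold), control the product norm from above by the general cross-product inequality and from below by pairing with cross-products of bounded cocycles (Proposition~\ref{prop:l1productgeneric}), realise the resulting integral class by a closed oriented manifold via Thom realisation in low degree, surger to make the classifying map a $\pi_1$-isomorphism, and apply the mapping theorem (Corollary~\ref{cor:l1mappingtheorem}) to get $\|M\|=\|\alpha'\|_1$; density then follows from additivity of $\SV(d)$ (Remark~\ref{rem:svsetbasics}). Your treatment of $d\ge 6$ by crossing the resulting $4$-manifold with a closed hyperbolic $(d-4)$-manifold is a harmless variant of the paper's argument, which instead uses the normed Thom realisation (Theorem~\ref{thm:thom}) in all degrees and simply absorbs the bounded multiplier $K_d$ into the dimension constant.

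The genuine gap is the statement you explicitly take as input, namely that for every $\epsilon>0$ there is a finitely presented group $G$ and an integral class $\alpha\in H_2(G;\R)$ with $0<\|\alpha\|_1\le\epsilon$ (the paper's Theorem~\ref{theorem:nogapgroup}); this is the heart of the matter, and the route you sketch towards it would fail. First, free products of finite cyclic groups do not realise arbitrarily small positive values of $\scl$: these groups have a uniform spectral gap (Duncan--Howie give $1/2$ for free groups, and the gap theorems for free products give a universal positive lower bound for elements of infinite order), which is precisely why the paper works instead with the universal central extension $E$ of Thompson's group $T$ --- finitely presented, with $H_2(E;\Z)\cong 0$, and realising every non-negative rational as an $\scl$-value (Proposition~\ref{prop:scl on central extension of thompsons group}). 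Second, even given an element of small positive $\scl$, Bavard duality together with $l^1$/bounded-cohomology duality does not by itself produce an integral $2$-class of small positive norm: $\scl$ is a relative (filling) quantity, and the conversion requires the identification of $\scl$ with a relative $l^1$-semi-norm (Corollary~\ref{cor:cylrelgen}), the doubling construction $D(G,g)$ together with the amenable glueing theorem, and crucially the hypothesis $H_2(G;\R)\cong 0$, yielding an absolute integral class with $\|\alpha_g\|_1=8\cdot\scl_G g$ (Theorem~\ref{thm:decomprelgen}, Corollary~\ref{cor:double}); without the $H_2$-vanishing one cannot control integrality and the norm simultaneously. So your argument establishes the geometric reduction but leaves the key new ingredient unproved, and the indicated source of small $\scl$-values is not viable.
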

In dimension~$4$, we get the following refinement of Theorem \ref{theorem:nogap}.
\begin{theorem}[rational realisation; Section~\ref{subsec:proofB}]\label{theorem:simvolQ}
For every~$q \in \Q_{\geq 0}$ there is an orientable closed connected   $4$-manifold~$M_q$
  with~$\mbox{$\| M_q \|=q$}$.
\end{theorem}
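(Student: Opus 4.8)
The plan is to reduce Theorem~\ref{theorem:simvolQ} to a statement about the $\ell^1$-seminorm of degree-$4$ group homology classes and to feed the paper's results on stable commutator length into it. For $q=0$ take $M_0=S^4$. For $q>0$ the aim is to exhibit a finitely presented group $\Gamma$ together with an integral class $\beta\in H_4(\Gamma;\Z)$ such that $\|\beta\|_1=q$ and such that $\beta$ is \emph{realised} by an orientable closed connected $4$-manifold $M$, meaning that the classifying map $c_M\colon M\to B\Gamma$ satisfies $(c_M)_*[M]=\beta$. Since the $\ell^1$-seminorm of a singular homology class only depends on the fundamental group --- by the mapping theorem, $c_M$ induces an isometric isomorphism on bounded cohomology --- such an $M$ automatically satisfies
\[
  \|M\| \;=\; \bigl\|(c_M)_*[M]\bigr\|_1 \;=\; \|\beta\|_1 \;=\; q,
\]
so that we may set $M_q := M$.

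To obtain $\beta$ I would proceed in two steps. \emph{Step~1: a degree-$2$ class of prescribed rational norm.} The group-theoretic part of the paper relates $\scl$ to the $\ell^1$-seminorm of certain classes in degree $2$; together with the fact (also exploited there) that $\scl$ attains every value in $\Q_{\geq 0}$ on suitable finitely presented groups, this produces, for each $r\in\Q_{\geq 0}$, a finitely presented group $G_r$ and a class $\alpha_r\in H_2(G_r;\Z)$ with $\|\alpha_r\|_1=r$. \emph{Step~2: raising the degree by a cross-product.} Put $\Gamma:=G_r\times\pi_1(\Sigma_2)$ and $\beta:=\alpha_r\times[\Sigma_2]\in H_4(\Gamma;\Z)$, where $\Sigma_2$ is a closed oriented surface of genus~$2$. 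By the Bucher-type product formula for the $\ell^1$-seminorm of a cross-product with a surface fundamental class, $\|\beta\|_1 = \tfrac{3}{2}\,\|\alpha_r\|_1\cdot\|\Sigma_2\| = 6r$, so choosing $r:=q/6$ gives $\|\beta\|_1 = q$.

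It remains to realise $\beta$ by a manifold. As we are in dimension $4$, Thom realisation yields a smooth orientable closed connected $4$-manifold $N$ and a map $f_0\colon N\to B\Gamma$ with $(f_0)_*[N]=\beta$. Taking connected sums of $N$ with copies of $S^1\times S^3$, each mapped into $B\Gamma$ through its $S^1$-factor (and hence contributing $0$ in $H_4(B\Gamma;\Z)$), we may assume $f_0$ is surjective on $\pi_1$; then, $\Gamma$ being finitely presented, $\ker(\pi_1(N)\to\Gamma)$ is normally generated by finitely many elements, which we represent by disjoint embedded circles and kill by surgery. These surgeries lie below the middle dimension, so they preserve the bordism class of $f_0\colon N\to B\Gamma$ and in particular the image of the fundamental class in $H_4(B\Gamma;\Z)$. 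The outcome is an orientable closed connected $4$-manifold $M$ with a map $f\colon M\to B\Gamma$ that is an isomorphism on $\pi_1$ and still satisfies $f_*[M]=\beta$; such an $f$ is the classifying map, whence $(c_M)_*[M]=\beta$ and $M$ is as required.

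The crux is Step~2: the \emph{rational} realisation needs the \emph{exact} value $\|\alpha_r\times[\Sigma_2]\|_1$, whereas the elementary estimate $\|\alpha_r\|_1\,\|\Sigma_2\| \le \|\alpha_r\times[\Sigma_2]\|_1 \le \binom{4}{2}\,\|\alpha_r\|_1\,\|\Sigma_2\|$ --- which already suffices for the density statement of Theorem~\ref{theorem:nogap} --- leaves a factor-$\binom{4}{2}$ ambiguity. Closing it amounts to upgrading Bucher's identity $\|\Sigma_g\times\Sigma_h\| = \tfrac{3}{2}(4g-4)(4h-4)$ from products of two surface fundamental classes to the product of an \emph{arbitrary} degree-$2$ class with a surface class: the upper bound by an efficient, foliated chain representative in the spirit of Bucher's argument, and the matching lower bound by pairing $\beta$ with the cup product of a bounded $2$-cocycle dual to $\alpha_r$ and a bounded $2$-cocycle realising $\|\Sigma_2\|$, while keeping the sup-norm of that cup product under control. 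A further point needing care is that the surgeries producing $M$ affect only $\pi_1$ and leave the image of the fundamental class in $H_4(B\Gamma;\Z)$ fixed, so that $\|M\|$ equals $\|\beta\|_1$ exactly and not merely up to the obvious lower bound.
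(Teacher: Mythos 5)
Your proposal is correct and follows essentially the same route as the paper: produce integral $2$-classes of every rational $\ell^1$-seminorm from $\scl$ (Theorem~\ref{theorem:nogapgroup}), multiply by~$[\Sigma_2]_\R$ and use the exact product formula $\|\alpha\times[\Sigma_2]_\R\|_1=\tfrac32\|\alpha\|_1\|\Sigma_2\|$ (Theorem~\ref{theorem:prodnorm}), and realise the resulting integral $4$-class by a closed $4$-manifold via Thom realisation plus surgery to a $\pi_1$-isomorphism, so that the mapping theorem gives $\|M\|=\|\beta\|_1$. Your explicit surgery discussion is just an unpacking of the paper's normed Thom realisation (Theorem~\ref{thm:thom} with $K_4=1$), and your sketch for closing the factor-$\binom{4}{2}$ gap matches the paper's proof of Theorem~\ref{thm:prodnorm}.
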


\subsection*{Method}

We first compute the $l^1$-semi-norm of certain integral $2$-classes in finitely presented groups
by relating these semi-norms to stable commutator length.

To formulate this connection, we recall some definitions. 
For a group $G$ and a class $\alpha \in H_d(G;\R)$, the \emph{$l^1$-semi-norm $\| \alpha \|_1$ of $\alpha$} is the semi-norm induced by the $l^1$-norm of chains in the singular chain complex of any model of~$BG$ (Section~\ref{subsec:l1-semi-norm and simvol}). The class~$\alpha$ is \emph{integral} if it lies in the image under the change of coefficients map 
map induced by~$\Z \to \R$.
  
For an element $g \in [G,G]$ in the commutator subgroup of $G$, the \emph{commutator length $\cl_G g$} of $g$ is the minimal number of commutators in $G$ needed to express~$g$ as their product. The \emph{stable commutator length (scl)} of~$g$ is the limit $\scl_G g := \lim_{n \to \infty} \cl_G(g^n) /n$. 
Stable commutator length is now well-understood for many classes of groups thanks largely to Calegari and others \cite{Calegari}. 

\begin{theorem}[Corollary \ref{cor:double}] \label{theorem:doubling}
 \sloppy Let $G$ be a finitely presented group with $H_2(G;\R) \cong 0$ and let $g \in [G,G]$ be an element
  of infinite order. Then there is a finitely presented group $D(G,g)$ and an integral class~$\alpha_g \in H_2(D(G,g);\R)$ such that
  \[ \| \alpha_g \|_1 = 8 \cdot \scl_G g.
  \]
\end{theorem}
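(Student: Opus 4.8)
The plan is to build $D(G,g)$ as a suitable "doubling" of $G$ along the cyclic subgroup generated by $g$, so that $g$ becomes a commutator-like element whose $l^1$-semi-norm of an associated $2$-class is governed by $\scl_G g$. First I would recall the standard relationship between stable commutator length and the $l^1$-semi-norm in degree $2$: by the work of Calegari (and the Bavard duality picture), for a group $H$ and $h \in [H,H]$ one has $\scl_H h = \tfrac12 \| \partial^{-1}[h] \|$ in a suitable relative or filling sense, and more usefully here, if $h$ is nullhomologous then the class it fills can be compared to its $l^1$-norm via a factor of $4$. So the "$8 = 2 \cdot 4$" in the statement already suggests the strategy: one factor of $2$ comes from the scl-to-filling-norm conversion, and another factor of $4$ (or $2$, applied twice) comes from the doubling construction, which contributes two "copies" of the relevant chain.

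The construction I would use: let $D(G,g) := G *_{\langle g \rangle} G$, the amalgamated free product of two copies of $G$ over the infinite cyclic subgroup $\langle g \rangle$ (this uses that $g$ has infinite order, so $\langle g \rangle \cong \Z$, embeds in each factor, and the amalgam is well-defined and finitely presented). By a Mayer--Vietoris argument, $H_2(D(G,g);\R)$ fits into an exact sequence involving $H_2(G;\R)^{\oplus 2} = 0$, $H_2(\Z;\R) = 0$, $H_1(\Z;\R) = \R$, and $H_1(G;\R)^{\oplus 2}$; the connecting map $H_1(\langle g\rangle;\R) \to H_1(G;\R)\oplus H_1(G;\R)$ sends the generator to $([g],-[g]) = (0,0)$ since $g \in [G,G]$, so a new class $\alpha_g \in H_2(D(G,g);\R)$ appears, and it is integral because it is the image of the generator of $H_2(D(G,g);\Z)$ coming from the same Mayer--Vietoris sequence with $\Z$-coefficients. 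Concretely $\alpha_g$ is represented by a $2$-cycle obtained by gluing two chains $c_1, c_2$ in the two copies of $BG$ along a loop representing $g$, where each $c_i$ is a chain realising $g$ as a boundary.

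To compute $\|\alpha_g\|_1$: for the upper bound, take chains $c_i$ in each copy of $BG$ with $\partial c_i$ a loop representing $g$ and with $l^1$-norm close to $4\cdot \scl_G g$ (using that the filling norm of a homologically trivial $g$ equals $4\,\scl_G g$, a form of Bavard-type duality; here $H_1(G;\R)$ may be nonzero but $g$ is a commutator so the relevant filling is homologically meaningful) — gluing the two copies with opposite orientations gives a cycle representing $\alpha_g$ of norm $\le 8\,\scl_G g$. For the lower bound, I would use the retraction $D(G,g) \to G$ collapsing one factor to the amalgamated $\langle g\rangle$, or better a Bavard-duality argument: construct a quasimorphism on $D(G,g)$ from an extremal homogeneous quasimorphism $\phi$ on $G$ for $g$, extending it across the amalgam (doubling it appropriately so it is defined consistently on both factors and on $\langle g\rangle$), and pair it with $\alpha_g$; the defect of the extended quasimorphism and the value $\phi(g)$ together yield $\|\alpha_g\|_1 \ge 8\,\scl_G g$ via the duality between $\ell^1$ and $\ell^\infty$ (bounded cohomology).

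The main obstacle I expect is the \emph{lower bound}, specifically extending an extremal quasimorphism (or the corresponding bounded cocycle) from $G$ to $D(G,g) = G *_{\langle g\rangle} G$ without losing control of the defect, and tracking the exact constant $8$ rather than something larger. The combinatorics of quasimorphisms on amalgamated products is delicate (one typically needs a Bass--Serre tree argument, and the naive extension can double the defect in an uncontrolled way); getting precisely $8 \cdot \scl_G g$ — so that the upper and lower bounds match — is where the real work lies. The hypothesis $H_2(G;\R) = 0$ is what guarantees $\alpha_g$ is the \emph{only} new class and that nothing else contaminates the norm computation, so I would be careful to use it exactly at the Mayer--Vietoris step.
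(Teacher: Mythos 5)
Your construction and upper bound run parallel to the paper's: the paper also takes the double $D(G,g)=(G\ast G)/\langle g_{\mathrm{left}}\cdot g_{\mathrm{right}}\rangle^{\triangleleft}\cong G\ast_{\Z}G$, obtains $\alpha_g$ by gluing two relative ``cylinder'' classes along the circle corresponding to $g$, checks integrality by comparing the $\Z$- and $\R$-coefficient exact sequences, and gets $\|\alpha_g\|_1\le 8\cdot\scl_G g$ from efficient fillings. One small imprecision on your side: the identity you invoke is only a \emph{stable} one, $\sfill_G g=4\cdot\scl_G g$ (the unstable filling norm is governed by $\cl_G$), so the upper bound has to pass through fillings of $g^n$, which represent $n\cdot\alpha_g$, and then divide by $n$; this is harmless but must be said.

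The genuine gap is the lower bound $\|\alpha_g\|_1\ge 8\cdot\scl_G g$, which you yourself flag as ``where the real work lies'' and do not supply; moreover the mechanism you sketch does not work as stated. A homogeneous quasimorphism $\phi$ on $D(G,g)$ only produces a bounded class $[\delta^1\phi]\in H^2_b(D(G,g);\R)$ lying in the kernel of the comparison map, and the Kronecker pairing of a bounded class with $\alpha_g$ factors through the comparison map, so any such class pairs to zero with $\alpha_g$: duality (Proposition~\ref{prop:duality simvol bc}) requires a bounded cocycle whose \emph{ordinary} cohomology class evaluates nontrivially on $\alpha_g$, and manufacturing one with the sharp constant out of an extremal quasimorphism for $g$ on $G$, extended over the amalgam, is exactly the step you leave open. (The fold retraction $D(G,g)\to G$ is also useless here, since it sends $\alpha_g$ into $H_2(G;\R)\cong 0$.) The paper closes this bound by a different route: it identifies $\scl$ with a quarter of the stable filling norm (Lemma~\ref{lem:sclfill}), proves via Gromov's equivalence theorem and the uniform boundary condition that the relative class $\beta_i$ over each mapping cylinder satisfies $\|\beta_i\|_1=\sfill=4\cdot\scl_G g$ (Proposition~\ref{prop:relsvfill}, Corollary~\ref{cor:cylrelgen}; this is where $H_2(G;\R)\cong 0$ is used), and then applies the amenable gluing theorem along the $\pi_1$-injective circle to get additivity, $\|\alpha_g\|_1=\|\beta_1\|_1+\|\beta_2\|_1=8\cdot\scl_G g$, transporting the computation to $BD(G,g)$ by the mapping theorem (Corollary~\ref{cor:l1mappingtheorem}). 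Without an ingredient of this kind (or a fully worked-out bounded-cocycle construction on the amalgam), your proposal only establishes $\|\alpha_g\|_1\le 8\cdot\scl_G g$.
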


We apply Theorem \ref{theorem:doubling} to the universal central extension $E$ of Thompson's group $T$. Recall that $T$ is the group of piecewise linear homeomorphisms of the circle with dyadic breakpoints and whose slopes are integer powers of~$2$. 
In Propostion~\ref{prop:scl on central extension of thompsons group}, we show that the universal central extension $E$ of $T$ is a finitely presented group with $H_2(E;\R) \cong 0$ and that every non-negative rational number may be realised by the stable commutator length of some element in~$E$. Using Theorem~\ref{theorem:doubling} this shows:
\begin{theorem}[Corollary~\ref{cor:controlled integral 2 classes}]\label{theorem:nogapgroup} 
For every $q \in \Q_{\geq 0}$ there is a finitely presented group $G_q$ and an integral class $\alpha_q \in H_2(G_q;\R)$ such that $\| \alpha_q \|_1 = q$. In particular, for every $\epsilon > 0$ there is a finitely presented group $G_\epsilon$ and an integral class $\alpha_\epsilon \in H_2(G_\epsilon;\R)$ such that $0 < \| \alpha_\epsilon \|_1 \leq \epsilon$.
\end{theorem}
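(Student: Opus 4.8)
The plan is to combine Theorem~\ref{theorem:doubling} with a suitable source group in which stable commutator length takes all non-negative rational values. As indicated in the text, the natural candidate is the universal central extension $E$ of Thompson's group $T$. First I would recall (or cite) the relevant facts about $E$: that $T$ is finitely presented and that its Schur multiplier $H_2(T;\Z)$ is infinite cyclic, so the universal central extension $E \twoheadrightarrow T$ exists, is finitely presented, and is perfect; moreover the five-term exact sequence together with $H_2(T;\Z) \cong \Z$ and perfectness forces $H_2(E;\Z) \cong 0$, hence $H_2(E;\R) \cong 0$. This is exactly the content invoked from Proposition~\ref{prop:scl on central extension of thompsons group}, and I would simply quote it: $E$ is finitely presented, $H_2(E;\R) \cong 0$, and for every $q \in \Q_{\geq 0}$ there is an element $g_q \in [E,E] = E$ of infinite order with $\scl_E g_q = q/8$.

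Given this, the theorem is essentially immediate. Fix $q \in \Q_{\geq 0}$. If $q = 0$, one may take $G_0$ to be the trivial group (or any finitely presented group with $H_2 = 0$) and $\alpha_0 = 0$; the norm of the zero class is~$0$. For $q > 0$, choose $g \in E$ of infinite order with $\scl_E g = q/8$, which exists by Proposition~\ref{prop:scl on central extension of thompsons group}. Now apply Theorem~\ref{theorem:doubling} to the pair $(E, g)$: since $E$ is finitely presented with $H_2(E;\R) \cong 0$ and $g$ has infinite order, we obtain a finitely presented group $G_q := D(E,g)$ together with an integral class $\alpha_q := \alpha_g \in H_2(G_q;\R)$ satisfying
\[
  \| \alpha_q \|_1 = 8 \cdot \scl_E g = 8 \cdot \frac{q}{8} = q.
\]
This proves the first assertion. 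For the ``in particular'' clause, given $\epsilon > 0$ pick any rational $q$ with $0 < q \leq \epsilon$ (e.g.\ $q = 1/n$ for $n$ large), and set $G_\epsilon := G_q$, $\alpha_\epsilon := \alpha_q$; then $0 < \|\alpha_\epsilon\|_1 = q \leq \epsilon$.

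The only genuine input beyond Theorem~\ref{theorem:doubling} is the scl computation on $E$, i.e.\ Proposition~\ref{prop:scl on central extension of thompsons group}, so the ``hard part'' of the overall argument has been isolated there rather than here. I expect the main obstacle in that proposition to be two-fold: verifying that $E$ is finitely presented and has vanishing real second homology (a Schur-multiplier computation for $T$, using that $T$ is a known finitely presented simple-by-something group with $H_2(T;\Z)\cong\Z$), and establishing that $\scl_E$ surjects onto $\Q_{\geq 0}$. For the latter, the standard toolkit is Calegari's rationality theorem for scl in free and related groups together with the behaviour of scl under the central extension $E \to T$ (central extensions can only change scl in a controlled, rational way), combined with known rotation-number/Euler-class computations that produce elements of $T$ with prescribed rational scl. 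But all of that lives in Proposition~\ref{prop:scl on central extension of thompsons group}, which we are permitted to assume; modulo that input, the present theorem follows directly by the substitution above.
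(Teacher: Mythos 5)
Your proposal is correct and follows essentially the same route as the paper's proof of Corollary~\ref{cor:controlled integral 2 classes}: take the zero class in a finitely presented group for $q=0$, and for $q>0$ pick $r_q \in E$ with $\scl_E r_q = q/8$ (Proposition~\ref{prop:scl on central extension of thompsons group}) and apply the doubling result (Corollary~\ref{cor:double}) to $D(E,r_q)$. The only point you pass over lightly, that the chosen element has infinite order, is exactly as in the paper justified by $\scl_E r_q > 0$, so there is nothing to add.
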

We can now take cross-products in homology to obtain integral classes in degree greater than $3$ with crude norm control. An application of a normed version of
Thom realisation (Theorem \ref{thm:thom}) proves Theorem~\ref{theorem:nogap}.

In dimension~$4$, we refine this construction by taking products with surfaces and
using an exact computation of the product norm. This
generalises a result of Bucher~\cite{bucherprodsurf}. Theorem \ref{theorem:simvolQ} will follow from these computations.

\begin{theorem}[Corollary~\ref{corr:exact 4-classes as product with surface}]\label{theorem:prodnorm}
 Let $G$ be a group, let $\alpha \in H_2(G;\R)$, and let $\Gamma_g$ be the fundamental group of the oriented closed connected surface~$\Sigma_g$ of genus~$g \geq 2$ with fundamental class~$[\Sigma_g]_\R \in H_2(\Gamma_g;\R)$.
   Then the $l^1$-semi-norm of~$\alpha \times [\Sigma_g]_\R \in H_4(G \times \Gamma_g;\R)$ satisfies
   $$
   \bigl\| \alpha \times [\Sigma_g]_\R \bigr\|_1 =  6 \cdot (g - 1) \cdot \| \alpha \|_1.
   $$
\end{theorem}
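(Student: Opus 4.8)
The plan is to prove the two inequalities separately, with the upper bound coming from an explicit cross-product of fundamental cycles and the lower bound coming from bounded cohomology (duality).

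For the upper bound, $\| \alpha \times [\Sigma_g]_\R \|_1 \le 6(g-1) \|\alpha\|_1$, I would use the standard estimate for the $l^1$-semi-norm of a homological cross-product: if $a$ is a cycle in $C_2(BG;\R)$ representing $\alpha$ and $s$ is a cycle in $C_2(B\Gamma_g;\R)$ representing $[\Sigma_g]_\R$, then the image of $a \times s$ under the Eilenberg--Zilber/shuffle map represents $\alpha \times [\Sigma_g]_\R$ and has $l^1$-norm at most $\binom{4}{2} \cdot |a|_1 \cdot |s|_1 = 6 \cdot |a|_1 \cdot |s|_1$. Taking infima over representatives, and using $\|[\Sigma_g]_\R\|_1 = 4g - 4 = 4(g-1)$ (the Gromov norm of a genus-$g$ surface), this would naively give the constant $24(g-1)$. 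To get the sharp constant $6(g-1)$ one must be more careful: the shuffle product of a $2$-cycle with a $2$-cycle only involves $\binom{4}{2}=6$ shuffle permutations, so in fact $|a\times s|_1 \le 6\,|a|_1\,|s|_1$, and then one wants $|s|_1$ close to... no — the point is that the efficient surface cycle for $\Sigma_g$ uses $4g-4$ triangles, so this gives $6(4g-4)\|\alpha\|_1$, still off by a factor. The sharp upper bound must instead exploit that $\Sigma_g$ is an iterated/connected construction: decompose $\Sigma_g$ (up to the degree-$(g-1)$ cover, or via a pair of pants decomposition) so that one reduces to $g=2$ and a multiplicativity statement, OR use that $\Sigma_g$ admits a self-cover of degree $g-1$ by $\Sigma_2$, reducing everything to the case $g=2$ where one checks $\|\alpha \times [\Sigma_2]_\R\|_1 \le 6\|\alpha\|_1$ by an explicit optimal chain. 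I expect this sharp upper bound to be the main obstacle, and I would look for the efficient cycle coming from the fact that $[\Sigma_g]$ pulls back from $[\Sigma_2]$ under a degree $(g-1)$ map, combined with a clever explicit $6$-simplex representative of $[\Sigma_2]_\R \times (\text{generator})$.

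For the lower bound, $\| \alpha \times [\Sigma_g]_\R \|_1 \ge 6(g-1)\|\alpha\|_1$, I would use the duality between the $l^1$-semi-norm in homology and the $l^\infty$-semi-norm in bounded cohomology: $\|\beta\|_1 = \sup \{ \langle \varphi, \beta\rangle : \varphi \in H^n_b(-;\R),\ \|\varphi\|_\infty \le 1\}$ for integral (or real) classes. Given a bounded cocycle $\psi$ on $BG$ with $\|\psi\|_\infty \le 1$ and $\langle \psi, \alpha\rangle$ close to $\|\alpha\|_1$, I would pair the cross-product class $\alpha\times[\Sigma_g]_\R$ with the bounded cohomology cross-product $\psi \times \omega_g$, where $\omega_g \in H^2_b(\Gamma_g;\R)$ is a bounded Euler-type cocycle dual to $[\Sigma_g]_\R$. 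The computation $\langle \psi \times \omega_g, \alpha \times [\Sigma_g]_\R\rangle = \langle \psi,\alpha\rangle \cdot \langle \omega_g, [\Sigma_g]_\R\rangle$ together with $\langle \omega_g,[\Sigma_g]_\R\rangle = 4(g-1)$ and a sharp bound $\|\psi\times\omega_g\|_\infty$ would give the estimate; here the key input is that on $2$-classes the $l^\infty$-norm of the cross product behaves better than the general submultiplicativity, essentially because $\Sigma_g$ is aspherical with amenable-free-product-free fundamental group so that Gromov's equality $\|\Sigma_g\|=4(g-1)$ is attained by an $l^\infty$-efficient cocycle. Combining with the exact value of $\|[\Sigma_g]_\R\|_1$ pins down the constant.

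Finally I would assemble the two inequalities. The cleanest route, if the direct sharp-constant computation proves recalcitrant, is: first establish the theorem for $g=2$ by a self-contained argument (this is essentially Bucher's theorem, or a normed refinement thereof), then bootstrap to general $g\ge 2$ using the degree-$(g-1)$ covering $\Sigma_2 \to$ ... rather: the fact that there is a $\pi_1$-surjection and a degree-$(g-1)$ map $\Sigma_g \to \Sigma_2$, which under $\id_G \times (\text{that map})$ sends $\alpha \times [\Sigma_g]_\R$ to $(g-1)\cdot(\alpha\times[\Sigma_2]_\R)$; naturality and monotonicity of $\|\cdot\|_1$ under such maps (degree multiplies, $\pi_1$-maps are norm-nonincreasing) then upgrade the $g=2$ case to $\|\alpha\times[\Sigma_g]_\R\|_1 = (g-1)\|\alpha\times[\Sigma_2]_\R\|_1 = 6(g-1)\|\alpha\|_1$. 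The main obstacle remains verifying the base case constant $6$ exactly; I expect this to require both a hands-on optimal chain (upper bound) and a matching bounded cocycle built from the bounded Euler class of $\Gamma_2$ (lower bound).
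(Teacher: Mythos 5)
Your plan has the right overall shape (cross-product upper bound, duality with an Euler/orientation-type cocycle for the lower bound), but both sharp constants --- which are the entire content of the theorem --- are left unproven, and in each case the idea that actually closes the gap is missing. For the upper bound you correctly observe that the naive shuffle estimate gives $24(g-1)\|\alpha\|_1$ and then defer to ``a clever explicit $6$-simplex representative'' for a base case $g=2$; but no such chain is produced, and the $g=2$ case is \emph{not} Bucher's theorem, since $\alpha$ is an arbitrary $2$-class of an arbitrary group. The paper's route is different and much softer: by Proposition~\ref{prop:l1 of 2-classes} every real $2$-class is an $l^1$-efficient linear combination of pushed-forward surface fundamental classes, so the upper bound reduces by functoriality and the triangle inequality to Bucher's surface-product computation (Theorem~\ref{thm:bucher_prod_surf}), yielding $\|\alpha\times\beta\|_1\le\frac32\|\alpha\|_1\|\beta\|_1$ for \emph{all} $2$-classes (Corollary~\ref{cor:l1prod2improvedupperbound}); with $\|[\Sigma_g]_\R\|_1=4g-4$ this is exactly $6(g-1)\|\alpha\|_1$, and no covering bootstrap is needed. (If you do insist on the covering reduction, note that monotonicity under $\id_G\times p$ only gives one inequality; the other direction needs a transfer argument, which you do not state.)

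The lower bound has the more serious gap. Pairing $\alpha\times[\Sigma_g]_\R$ with $\psi\times\omega_g$, where $\psi$ is extremal for $\alpha$ and $\omega_g$ is extremal for $[\Sigma_g]_\R$, only gives $\|\alpha\times[\Sigma_g]_\R\|_1\ge\|\alpha\|_1\cdot(4g-4)$, because the only available norm bound is $\|\psi\times\omega_g\|_\infty\le\|\psi\|_\infty\cdot\|\omega_g\|_\infty\le 1$; extremality of $\omega_g$ (your ``$l^\infty$-efficient cocycle'' remark) controls the pairing value, not the sup-norm of the product, so it cannot produce the missing factor $\frac32$. The mechanism in the paper (Theorem~\ref{thm:prodnorm}, Claim~\ref{claim:norm of theta 2/3}) is to take $\omega_g=\rho^*\Or$, the orientation cocycle of the boundary action --- whose specific structure (values in $\{-1,0,1\}$, vanishing on triples with repeated points) is essential --- and to replace $\psi\times\rho^*\Or$ by its alternation $\Theta=\alt^4_b(\psi\times\rho^*\Or)$, which represents the same bounded class; a case-by-case combinatorial estimate over the coincidence patterns of the five circle points then shows $\|\Theta\|_\infty\le\frac23$, and duality (Proposition~\ref{prop:duality simvol bc}) gives $\|\alpha\times[\Sigma_g]_\R\|_1\ge\frac32\|\alpha\|_1\cdot(4g-4)$. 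Neither the alternation step nor any substitute for this estimate appears in your proposal, so as written the argument proves only the inequalities $4(g-1)\|\alpha\|_1\le\|\alpha\times[\Sigma_g]_\R\|_1\le 24(g-1)\|\alpha\|_1$.
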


In particular, we establish the following connection between stable commutator length and simplicial volume in dimension~$4$:

\begin{theorem}[Corollary \ref{cor:exact4}] \label{theorem:exact4mfd}
 Let $G$ be a finitely presented group that satisfies $H_2(G;\R) \cong 0$ and let $g \in [G,G]$ be an element in the commutator subgroup. Then there is an orientable closed connected $4$-manifold~$M_g$ with
  \[ \|M_g \| = 48 \cdot \scl_G g.
  \]
\end{theorem}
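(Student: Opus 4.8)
The plan is to chain together three results already at our disposal: the doubling construction of Theorem~\ref{theorem:doubling}, the exact surface-product formula of Theorem~\ref{theorem:prodnorm}, and the normed version of Thom realisation (Theorem~\ref{thm:thom}). First I would dispose of the degenerate case: if $g$ does not have infinite order, then $g$ is a torsion element of $[G,G]$, so $\scl_G g = 0$, and $M_g := S^4$ already satisfies $\| M_g \| = 0 = 48 \cdot \scl_G g$. Hence from now on assume $g \in [G,G]$ has infinite order. Since $G$ is moreover finitely presented with $H_2(G;\R) \cong 0$, Theorem~\ref{theorem:doubling} applies and produces a finitely presented group $D := D(G,g)$ together with an integral class $\alpha_g \in H_2(D;\R)$ satisfying $\| \alpha_g \|_1 = 8 \cdot \scl_G g$.

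Next I would cross this class with a genus-$2$ surface. Let $\Sigma_2$ be the oriented closed connected surface of genus~$2$, with fundamental group $\Gamma_2$ and fundamental class $[\Sigma_2]_\R \in H_2(\Gamma_2;\R)$; this class is integral. Set $Q := D \times \Gamma_2$, which is again finitely presented, and consider the homology cross-product $\beta := \alpha_g \times [\Sigma_2]_\R \in H_4(Q;\R)$. As a cross-product of integral classes, $\beta$ is integral, and Theorem~\ref{theorem:prodnorm}, applied to the group~$D$, the class~$\alpha_g$, and genus~$2$, evaluates its $l^1$-semi-norm as $\| \beta \|_1 = 6 \cdot (2-1) \cdot \| \alpha_g \|_1 = 48 \cdot \scl_G g$.

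Finally I would realise $\beta$ geometrically. Since $Q$ is finitely presented and $\beta \in H_4(Q;\R)$ is integral, the normed Thom realisation of Theorem~\ref{thm:thom} yields an orientable closed connected $4$-manifold $M_g$ together with a continuous map $f \colon M_g \to BQ$ with $f_*[M_g]_\R = \beta$ and, crucially, $\| M_g \| = \| \beta \|_1 = 48 \cdot \scl_G g$. This is exactly the asserted identity, so the proof would be complete.

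The bookkeeping — the constant $8$ coming out of the doubling construction and the factor $6 \cdot (g-1)$ coming out of the surface product, specialised at $g = 2$ — together with the torsion case is routine, and all the real content is imported from the three cited theorems. The one step that genuinely needs care is the last: one must know that Theorem~\ref{thm:thom} is \emph{norm-preserving}, i.e.\ that the realising $4$-manifold can be chosen with simplicial volume equal to $\| \beta \|_1$ on the nose, and not merely comparable to it up to a dimensional constant (which is the ``crude'' control sufficient for the density statement of Theorem~\ref{theorem:nogap}). It is precisely this exactness — available because $\beta$ is an integral class, lives in degree~$4$, and sits inside a classifying space — that upgrades density to the exact equality claimed here.
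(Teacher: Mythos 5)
Your proposal is correct and follows essentially the same route as the paper: handle the torsion case via $S^4$, double to get $D(G,g)$ with the integral class of norm $8\cdot\scl_G g$ (Corollary~\ref{cor:double}), cross with $[\Sigma_2]_\R$ and use Corollary~\ref{corr:exact 4-classes as product with surface} to get the factor $6$, then apply normed Thom realisation with $K_4=1$. The paper merely packages the last two steps as Proposition~\ref{prop:simvolfromgroups}, so there is no substantive difference.
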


\subsection*{Organisation of this article}

Sections \ref{sec:simvol}, \ref{sec:bc}, and \ref{sec:scl} recall basic properties and known results on simplicial volume, bounded cohomology and stable commutator length, respectively.

In Section \ref{sec:thompson} we compute $\scl$ on the universal
central extension~$E$ of Thompson's group~$T$
(Proposition~\ref{prop:scl on central extension of thompsons group}).
This will be used in Section~\ref{sec:fillings} to construct
integral $2$-classes with controlled $l^1$-semi-norms. There we also
show Theorems~\ref{theorem:doubling} and~\ref{theorem:nogapgroup}.

In Section \ref{sec:l1 norm of prod with surfaces} we get the refinement for dimension $4$ in group homology: We compute the $l^1$-semi-norm of cross-products of general $2$-classes with certain Euler-extremal $2$-classes (Theorem \ref{thm:prodnorm}). As a corollary we obtain Theorem~\ref{theorem:prodnorm}. 

All manifolds constructed in this article will arise via a suitable version of Thom's realisation theorem in Section~\ref{sec:manufac sim vol}.
This allows us to manufacture manifolds with controlled simplicial volume and to prove Theorems~\ref{theorem:nogap}, \ref{theorem:simvolQ}, and~\ref{theorem:exact4mfd}.

A discussion of related problems may be found in Section~\ref{subsec:related problems}.

\subsection*{Acknowledgements}
The authors are grateful to the anonymous referee for carefully reading the paper and for many 
constructive suggestions. The authors would also like to thank Roberto Frigerio for pointing out an inconsistent use of the Euler cocycle in a previous version of this article.
The first author would like to thank Martin Bridson for his invaluable support and very helpful discussions.
He would further like to thank Lvzhou (Joe) Chen for many helpful discussions on stable commutator length.

\section{Simplicial volume}\label{sec:simvol}

We recall the $l^1$-semi-norm on homology and simplicial volume and
establish some notation. In particular, we collect basic properties
related to classes in degree~$2$.

\subsection{The $l^1$-semi-norm and simplicial volume} \label{subsec:l1-semi-norm and simvol}

The notion of simplicial volume of manifolds is based on the
$l^1$-semi-norm on singular homology. More precisely: Let $X$
be a topological space and let $d \in \N$. Then the \emph{$l^1$-semi-norm}
on~$H_d(X;\R)$ is 
\begin{align*}
  \| \cdot \|_1 \colon H_d(X;\R) & \longrightarrow \R_{\geq 0}
  \\
  \alpha & \longmapsto \inf \bigl\{ |c|_1 \bigm| c \in
  C_d(X;\R),\ \partial c = 0,\ [c] = \alpha \bigr\};
\end{align*}
here, $C_d(X;\R)$ is the singular chain module of~$X$ in degree~$d$
with $\R$-coefficients and $|\cdot|_1$ denotes the $l^1$-norm
on~$C_d(X;\R)$ associated with the basis of singular simplices.
More generally, if $A \subset X$ is a subspace, one can also consider
the \emph{relative $l^1$-semi-norm} on~$H_d(X,A;\R)$ induced
by the $l^1$-semi-norm on~$C_d(X;\R)$.

The $l^1$-semi-norm is a functorial semi-norm in the sense of
Gromov~\cite[p.~302]{gromov_metric}:

\begin{rmk} \label{rmk:l1functorial}
  If $f \colon X \longrightarrow Y$ is continuous, $d \in \N$,
  and $\alpha \in H_d(X;\R)$, then
  \[ \bigl\| H_d(f;\R)(\alpha) \bigr\|_1 \leq \|\alpha\|_1.
  \]
\end{rmk}

\begin{defn}[simplicial volume~\cite{vbc}]
  Let $M$ be an oriented closed connected $d$-dimensional manifold.
  Then the \emph{simplicial volume of~$M$} is defined by
  \[ \|M\| := \bigl\| [M]_\R \bigr\|_1,
  \]
  where $[M]_\R \in H_d(M;\R)$ denotes the $\R$-fundamental class
  of~$M$.
  
  More generally, if $(M,\partial M)$ is an oriented compact connected
  $d$-manifold with boundary, then one defines the \emph{relative
    simplicial volume of~$(M,\partial M)$} by
  \[ \|M,\partial M\| := \bigl\| [M,\partial M]_\R \bigr\|_1,
  \]
  where $[M,\partial M]_\R \in H_d(M,\partial M;\R)$ denotes the
  relative $\R$-fundamental class of~$(M,\partial M)$.

  Because the definition of simplicial volume is independent of the
  chosen orientation, we will also speak of the simplicial volume of
  orient\emph{able} manifolds.
\end{defn}

On the one hand, simplicial volume clearly is a topological invariant
of (orientable) compact manifolds that is compatible with mapping
degrees. On the other hand, simplicial volume is related in a
non-trivial way to Riemannian volume, e.g., in the case of hyperbolic
manifolds~\cite{vbc,thurston}. Therefore, simplicial volume
is a useful invariant in the study of rigidity properties of manifolds.

Basic examples of simplicial volumes are listed in
Examples~\ref{exa:gap2}, \ref{exa:gap3}, and~\ref{exa:gap4}. In
addition to geometric arguments, a key tool for working with
simplicial volume is bounded cohomology (see
Proposition~\ref{prop:duality simvol bc} below).

\subsection{Simplicial volume in low dimensions and gaps}

We collect the low-dimensional examples of simplicial volume as stated in the introduction.
Recall that for~$d\in \N$ we define~$\SV(d) \subset \R_{\geq 0}$ via
$$
 \SV(d) := \bigl\{ \|M\| \bigm| \text{$M$ is an orientable closed connected $d$-manifold} \bigr\}.
$$

\begin{rmk}\label{rem:svsetbasics}
  As there are only countably many homotopy types of orientable closed
  connected (occ) manifolds~\cite{mathercounting}, the set~$\SV(d)$ is
  countable for every~$d \in \N$. 
  
  The set~$\SV(d)$ is also closed
  under addition. For $d \geq 3$, this follows from the additivity of simplicial volume under connected sums~\cite{vbc}\cite[Corollary~7.7]{Frigerio} and for $d=2$ this follows from the explicit computation of $\SV(2)$ as seen in Example~\ref{exa:gap2}.
\end{rmk}

Clearly, $\SV(0) = \{1\}$ (the only relevant manifold being a single point) and
$\SV(1) = \{0\}$ (the only manifold being the circle).

\begin{exmp}[dimension~$2$]\label{exa:gap2}
For an orientable closed connected surface $\Sigma_g$ of genus $g \geq 1$ we have~$\| \Sigma_g \| = 2 \cdot \bigl|\chi(\Sigma_g)\bigr| = 4 \cdot (g-1)$ \cite{vbc,benedettipetronio}\cite[Corollary~7.5]{Frigerio}. 
Hence, 
$$
\SV(2) = \{ 0, 4, 8, \ldots \} = \N [4].
$$
We observe that the \emph{gap} in simplicial volume of dimension $2$ is $4$.
\end{exmp}

\begin{exmp}[dimension~$3$]\label{exa:gap3}
  We have~\cite{vbc,soma}\cite[Corollary~7.8]{Frigerio}
  \begin{align*}
    \SV(3) = \N
     \Bigl[ \frac{\vol(M)}{v_3}
       \Bigm| \; & \text{$M$ is a complete hyperbolic $3$-manifold}\\
                 & \text{with toroidal boundary and finite volume}
     \Bigr]
  \end{align*}
  and where $v_3$ is the maximal volume of an ideal simplex in~$\mathbb{H}^3$.
  This shows that there is a gap of simplicial volume in dimension~$3$,
  namely~$w/v_3 \approx 0.928\dots$, where $w$ is the volume of the Weeks
  manifold~\cite{gabaimeyerhoffmilley}. Moreover, the set~$\SV(3)$
  has countably many accumulation points (because the 
  set of hypbolic volumes has the order type~$\omega^\omega$~\cite{thurston}). 
\end{exmp}

\begin{exmp}[dimension~$4$]\label{exa:gap4}
The smallest known Riemannian volume $\vol(M)$ of an occ \emph{hyperbolic} $4$-manifold is~$64 \cdot
  \pi^2/3$~\cite{condermaclachlan}.
  In view of the computation of the simplicial volume of hyperbolic
  manifolds~\cite{vbc,thurston}\cite[Chapter~7.3]{Frigerio} this 
means that the smallest known simplicial volume of a hyperbolic occ $4$-manifold is $\frac{64 \cdot \pi^2}{3 \cdot v_4} \in [700,800]$ where $v_4$ is the maximal volume of an ideal $4$-simplex in $\mathbb{H}^4$.

If $\Sigma_g$, $\Sigma_h$ are orientable closed
  connected surfaces of genus~$g,h \geq 1$,
  respectively, then Bucher \cite{bucherprodsurf} showed that $\|\Sigma_g \times \Sigma_h\| = \frac32 \cdot \|\Sigma_g\| \cdot \|\Sigma_h\|$.
Hence, $\| \Sigma_2 \times \Sigma_2 \| = 24$.
This has been the smallest known non-trival simplicial volume of a $4$-manifold.
More general surface bundles over surfaces do not yield lower estimates~\cite{hosterkotschick,bucherbundles}. Also the recent
  computations/estimates for mapping tori in
  dimension~$4$~\cite{bucherneofytidis} do not produce improved gap
  bounds.
\end{exmp}

\subsection{The $l^1$-semi-norm in degree~$2$}

As classes in degree~$2$ will play an important role in our
constructions, we collect some basic properties concerning the
$l^1$-semi-norm in degree~$2$.

\begin{prop}[$l^1$-semi-norm in degree~$2$;
  \protect{\cite[Proposition~2.4]{crowleyloeh}}] \label{prop:l1 of 2-classes}
  Let $X$ be a topological space and let $\alpha \in H_2(X;\R)$. Then
  \begin{align*}
    \| \alpha \|_1
      = \inf \biggl\{ \sum_{j=1}^k |a_j| \cdot \| \Sigma_{(j)}\|
      \biggm| \; & k \in \N, a_1,\dots, a_k \in \R\setminus\{0\},
      \\
      & \text{$\Sigma_{(1)}, \dots, \Sigma_{(k)}$ orientable closed connected surfaces,}
      \\
      & \text{$f_1 \colon \Sigma_{(1)} \rightarrow X, \dots, f_k \colon \Sigma_{(k)} \rightarrow X$ continuous}
      \\
      & \text{with $\sum_{j=1}^k a_j \cdot H_2(f_j;\R)[\Sigma_{(j)}]_\R = \alpha$ in~$H_2(X;\R)$}
             \biggr\}.
  \end{align*}
\end{prop}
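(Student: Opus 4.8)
The plan is to prove both inequalities separately.

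For the inequality $\| \alpha \|_1 \leq \inf\{\dots\}$: given any representation $\sum_{j=1}^k a_j \cdot H_2(f_j;\R)[\Sigma_{(j)}]_\R = \alpha$, I would push forward a cycle representing $[\Sigma_{(j)}]_\R$ of nearly minimal $l^1$-norm (i.e.\ with $l^1$-norm close to $\|\Sigma_{(j)}\|$) along $f_j$, scale by $a_j$, and sum. Functoriality of the $l^1$-semi-norm (Remark~\ref{rmk:l1functorial}) and the triangle inequality give $\|\alpha\|_1 \leq \sum_{j=1}^k |a_j| \cdot \|\Sigma_{(j)}\|$; taking the infimum over all such representations yields the bound. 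This direction is routine.

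For the reverse inequality $\| \alpha \|_1 \geq \inf\{\dots\}$: this is the substantive part, and the main obstacle is turning an arbitrary real $2$-cycle into a weighted sum of \emph{surface} fundamental classes. First I would reduce to the case where $X$ is a $2$-dimensional CW-complex (or even a finite one): since $\alpha$ is supported on a compact set, it factors through a finite subcomplex of a CW-model, and by cellular approximation / Hurewicz-type arguments classes in $H_2$ only see the $2$-skeleton; the $l^1$-semi-norm can only decrease under such reductions, so it suffices to realise the bound there. The key classical input is that over $\R$ (or $\Q$) any $2$-dimensional homology class of a space is a rational (hence real) combination of images of surface fundamental classes — this is a standard consequence of the fact that closed surfaces generate $H_2$ with $\Q$-coefficients (e.g.\ via Steenrod's realisation of homology classes in low degrees by manifolds, or directly by resolving a simplicial $2$-cycle into a branched-surface / pseudo-surface and then normalising).

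The delicate point is the \emph{norm bookkeeping}: I must choose such a surface decomposition whose weighted total $2 \sum |a_j|(g_j - 1)$ is controlled by $|c|_1$ for a near-optimal cycle $c$. The idea is to start from a cycle $c = \sum_i \lambda_i \sigma_i$ with $|c|_1 = \sum_i |\lambda_i|$ close to $\|\alpha\|_1$, and after subdividing and collecting faces to make $c$ into an (integral multiple of a) genuine simplicial cycle with prescribed rational coefficients, glue the $2$-simplices along matching edges to produce a pseudomanifold $S$ mapping to $X$; its (possibly disconnected, possibly with pinch points) components can be separated and normalised to honest closed surfaces $\Sigma_{(j)}$ with multiplicities $a_j$, and the bound $\sum_j |a_j| \cdot \|\Sigma_{(j)}\| \leq |c|_1$ follows because each surface of genus $g_j$ built from $t_j$ triangles satisfies $2(g_j-1) \leq t_j$ (equivalently $\chi(\Sigma_{(j)}) = V_j - E_j + F_j \geq -F_j$, using $E_j \leq \tfrac32 F_j$ in a triangulated surface and $V_j \geq 0$). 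Passing to the infimum over near-optimal cycles $c$ gives $\|\alpha\|_1 \geq \inf\{\dots\}$.

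I would then remark that both infima are in fact attained over $\Q$-combinations (by continuity of the bound in the $a_j$ and density), matching the form stated, and note that the argument is essentially \cite[Proposition~2.4]{crowleyloeh}, to which I would defer for the full combinatorial details of the pseudomanifold-to-surface step, which is where all the technical care lies.
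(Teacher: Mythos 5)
First, note that the paper does not prove this proposition at all: it is quoted verbatim from Crowley--L\"oh \cite[Proposition~2.4]{crowleyloeh}, so there is no in-paper argument to compare against. Your outline is the standard argument behind that citation (easy direction by functoriality and the triangle inequality; hard direction by turning a near-optimal cycle into a glued-up pseudo-surface and resolving it), and in that sense the route is the right one. Two points, however, need attention, and the first is a genuine quantitative error as written.

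The norm bookkeeping is off by a factor of~$2$. With the paper's normalisation (Example~\ref{exa:gap2}) one has $\|\Sigma_{(j)}\| = 4\,(g_j-1)$ for $g_j\ge 1$, so the inequality you invoke, $2\,(g_j-1)\le t_j$, only yields $\sum_j |a_j|\cdot\|\Sigma_{(j)}\| \le 2\,|c|_1$, i.e.\ the statement with an extra factor~$2$, not the claimed equality; your phrase ``weighted total $2\sum|a_j|(g_j-1)$'' suggests you were implicitly using $\|\Sigma_g\|=2(g-1)$. The fix is already contained in your own Euler-characteristic computation done sharply: in the closed surface obtained by pairing off the edges of the $t_j$ triangles one has $E_j = \tfrac32 F_j$ exactly, hence $\chi(\Sigma_{(j)}) = V_j - \tfrac12 F_j \ge -\tfrac12 t_j$, i.e.\ $4\,(g_j-1)\le t_j$, which is precisely $\|\Sigma_{(j)}\|\le t_j$ and gives $\sum_j|a_j|\cdot\|\Sigma_{(j)}\|\le |c|_1$ as required. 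Second, the passage from a real cycle to the glued surfaces is glossed over: for an arbitrary real class $\alpha$ you cannot ``make $c$ into an (integral multiple of a) genuine simplicial cycle with prescribed rational coefficients'' while still representing $\alpha$. What one actually does is observe that the cycles supported on the finitely many simplices of $c$ form a rational subspace of the chain module, so a near-optimal real cycle can be written as a real linear combination $\sum_t \mu_t z_t$ of \emph{integral} cycles supported on the same simplices with $\sum_t|\mu_t|\,|z_t|_1\le |c|_1+\varepsilon$ (take one rational cycle very close to $c$ and express the small remainder in a fixed rational basis); the gluing construction is then applied to each integral $z_t$ separately, and the real coefficients $\mu_t$ become the $a_j$ in the statement. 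Your concluding remark about $\Q$-combinations addresses the coefficients $a_j$, not this approximation of the cycle itself, so this step should be spelled out. With these two repairs your argument is the standard proof of the cited result.
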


\begin{rmk}
  Let $X$ be a path-connected topological space, let $\alpha \in
  H_2(X;\Z)$, and let $\alpha_\R \in H_2(X;\R)$ be the image
  of~$\alpha$ under the change of coefficients map $\Z \to \R$. Then
  the description of~$\|\alpha_\R\|_1$ from Proposition~\ref{prop:l1
    of 2-classes} simplifies as follows: We have
  \[
  \| \alpha_\R \|_1 = \inf_{(f,\Sigma) \in \Sigma(\alpha)} \frac{\| \Sigma \| }{|n(f,\Sigma)|}
     = \inf_{(f,\Sigma) \in \Sigma(\alpha)} \frac{4 \cdot \bigl(g(\Sigma) - 1)}{|n(f,\Sigma)|}, 
  \]
  where $\Sigma(\alpha)$ is the class of all pairs~$(f,\Sigma)$ consisting of
  an orientable closed connected surface~$\Sigma$ of genus~$g(\Sigma) \geq 1$
  and a continuous map~$f \colon \Sigma \longrightarrow X$ with~$H_2(f;\Z)[\Sigma] =
  n(f,\Sigma) \cdot \alpha$ in~$H_2(X;\Z)$ for some integer~$n(f,\Sigma) \in \Z$.
\end{rmk}

In Section~\ref{sec:fillings}, we will relate $l^1$-semi-norms of
relative classes in degree~$2$ to filling invariants and stable
commutator length.

\subsection{Simplicial volume of products} \label{subsec:simvol of prod} 

We recall basic results on $l^1$-semi-norms of homological
cross-products.

\begin{prop}\label{prop:l1productgeneric}
  Let $X$, $Y$ be topological spaces, let $m,n \in \N$, and
  let $\alpha \in H_m(X;\R)$, $\beta \in H_n(Y;\R)$. Then
  the cross-product~$\alpha \times \beta \in H_{m+n}(X\times Y;\R)$
  satisfies
  \[ \| \alpha \|_1 \cdot \|\beta\|_1 \leq \| \alpha \times \beta\|_1
     \leq {{m+n} \choose m} \cdot \|\alpha\|_1 \cdot \|\beta\|_1.
  \]
\end{prop}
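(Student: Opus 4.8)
The plan is to prove the two inequalities separately, using only standard facts about the $l^1$-semi-norm and the homological cross-product on the singular chain level; in fact this is a well-known statement (see Gromov's \emph{Volume and bounded cohomology}), so the task is mainly to recall the argument.

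For the upper bound: fix singular cycles $c \in C_m(X;\R)$ with $\partial c = 0$, $[c]=\alpha$ and $d \in C_n(Y;\R)$ with $\partial d = 0$, $[d]=\beta$. One chooses an explicit Eilenberg--Zilber / Alexander--Whitney style chain-level cross-product $C_m(X;\R)\otimes C_n(Y;\R)\to C_{m+n}(X\times Y;\R)$ realising the homology cross-product; the standard formula sends a pair of simplices $(\sigma,\tau)$ to a signed sum of $(m,n)$-shuffles, i.e.\ to exactly $\binom{m+n}{m}$ singular $(m+n)$-simplices in $X\times Y$, each with coefficient $\pm 1$ times the product of the coefficients of $\sigma$ and $\tau$. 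Hence the cross-product $c\times d$ is a cycle representing $\alpha\times\beta$ with $|c\times d|_1 \le \binom{m+n}{m}\cdot |c|_1\cdot |d|_1$. Taking infima over all such $c$ and $d$ gives $\|\alpha\times\beta\|_1 \le \binom{m+n}{m}\cdot \|\alpha\|_1\cdot\|\beta\|_1$.

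For the lower bound: one uses the two slice projections $p_X\colon X\times Y\to X$ and $p_Y\colon X\times Y\to Y$ together with the cohomological cross-product. Pick bounded cohomology classes (or rather, bounded cochains) that are dual to $\alpha$ and $\beta$ in the sense that they nearly compute the semi-norms via the duality between the $l^1$-semi-norm in homology and the $\ell^\infty$-semi-norm in bounded cohomology (this duality is recorded as Proposition~\ref{prop:duality simvol bc} later in the paper, so I may invoke it). If $\varphi\in C_b^m(X;\R)$ almost realises $\|\alpha\|_1$ and $\psi\in C_b^n(Y;\R)$ almost realises $\|\beta\|_1$, then $p_X^*\varphi \smile p_Y^*\psi$ is a bounded cochain on $X\times Y$ whose sup-norm is at most $\|\varphi\|_\infty\cdot\|\psi\|_\infty$, and whose evaluation on $\alpha\times\beta$ equals (up to sign) the product of the two evaluations. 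Running the duality inequality in the other direction yields $\|\alpha\|_1\cdot\|\beta\|_1 \le \|\alpha\times\beta\|_1$. (Alternatively, one can avoid bounded cohomology entirely and argue directly with fundamental cycles, but the cochain argument is cleanest.)

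The main obstacle — to the extent there is one — is bookkeeping the signs and the combinatorics of the shuffle product to be sure the coefficient bound is exactly $\binom{m+n}{m}$ and not something larger, and matching conventions so that the cohomological cross-product pairs correctly with the homological one. Neither of these is deep; both are routine once the explicit Eilenberg--Zilber formula is written down. I do not expect to need any hypothesis on $X$, $Y$, $\alpha$, or $\beta$ beyond what is stated.
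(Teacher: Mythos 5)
Your proposal is correct and follows essentially the same route as the paper: the upper bound via the explicit Eilenberg--Zilber/shuffle description of the homological cross-product (the coefficient count giving~$\binom{m+n}{m}$), and the lower bound via the duality principle (Proposition~\ref{prop:duality simvol bc}) combined with the bounded cohomological cross-product of (near-)extremal cocycles pulled back along the projections. The only care needed is the sign in the Kronecker-pairing compatibility, which you already flag and which is harmless since both semi-norms are non-negative.
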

\begin{proof}
  The lower estimate follows from the duality principle
  (Proposition~\ref{prop:duality simvol bc}) and an explicit
  description of the cohomological cross-product (in bounded
  cohomology), the upper estimate follows from an explicit description
  of the homological
  cross-product~\cite{vbc}\cite[Theorem~F.2.5]{benedettipetronio}
  (this classical argument works also for general homology classes,
  not only for fundamental classes of manifolds).
\end{proof}

However, in general, it seems to be a hard problem to compute the
exact values of~$l^1$-semi-norms of products. One of the few known
cases are products of two orientable closed connected surfaces, whose
simplicial volumes have been computed by Bucher:

\begin{thm}[\protect{\cite[Corollary~3]{bucherprodsurf}}] \label{thm:bucher_prod_surf}
  Let $\Sigma_g, \Sigma_h$ be orientable closed connected surfaces of genus~$g,h \in \N_{\geq 1}$. Then
  \[
  \| \Sigma_g \times \Sigma_h \| = \frac{3}{2} \cdot \| \Sigma_g \| \cdot \| \Sigma_h \|
    = 24 \cdot (g-1) \cdot (h-1).
  \]
\end{thm}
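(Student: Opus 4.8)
The plan is to prove Theorem~\ref{thm:bucher_prod_surf} by establishing both inequalities separately. For the upper bound, the idea is that since $\Sigma_h$ is aspherical with fundamental group $\Gamma_h$, a singular cycle representing $[\Sigma_h]_\R$ can be taken inside $B\Gamma_h = \Sigma_h$ itself, and one wants to exhibit an efficient chain representing the cross-product that beats the crude bound $\binom{4}{2}=6$ from Proposition~\ref{prop:l1productgeneric} by a factor of $\tfrac14$. Concretely, I would recall Bucher's construction: one triangulates $\Sigma_g \times \Sigma_h$ cleverly using the product cell structure, smearing over a positive-measure family of simplices so that the $l^1$-norm of the resulting cycle approaches $\tfrac32 \cdot \|\Sigma_g\| \cdot \|\Sigma_h\|$. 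Alternatively, and more in the spirit of this paper, one can phrase the upper bound via the explicit product formula for the fundamental cycle combined with the fact that, in degree $2$, optimal cycles are supported on surface groups (Proposition~\ref{prop:l1 of 2-classes}).

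The lower bound is where the real content lies, and it is obtained through bounded cohomology duality (Proposition~\ref{prop:duality simvol bc}, referenced but appearing later). The strategy is to produce an explicit bounded cohomology class $\beta \in H^4_b(\Sigma_g \times \Sigma_h;\R)$ that evaluates to a large number on $[\Sigma_g \times \Sigma_h]_\R$ while having small sup-norm. The natural candidate is the cup product $\omega_g \smile \omega_h$, where $\omega_g \in H^2_b(\Sigma_g;\R)$ is (a bounded representative of) the Euler class / area class of the surface; its sup-norm is controlled by the Milnor--Wood inequality, giving $\|\omega_g\|_\infty = \tfrac{1}{4\pi}\cdot(\text{something})$ with $\langle \omega_g, [\Sigma_g]\rangle = -\chi(\Sigma_g)$ in suitable normalization. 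The subtle point, and the one Bucher's paper actually resolves, is that the naive estimate $\|\alpha \smile \beta\|_\infty \le \|\alpha\|_\infty \cdot \|\beta\|_\infty$ is \emph{not} sharp here: one must instead use that the relevant cup product, when realized via the explicit cocycle built from the orientation/Euler cocycle on $\Homeo^+(S^1)$, has sup-norm exactly $\tfrac23$ of the product of the individual norms, because the alternating Euler cocycle takes only finitely many values and their combination on a $4$-simplex cannot simultaneously be extremal in both factors. This is the heart of the $\tfrac32$ (rather than a worse constant) and requires a careful combinatorial analysis of the values of $\eurm_b \times \eurm_b$ on tuples of circle homeomorphisms.

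Thus the key steps, in order, are: (1) reduce to surface groups and set up the duality framework, identifying $[\Sigma_g \times \Sigma_h]_\R$ with $[\Sigma_g]_\R \times [\Sigma_h]_\R$; (2) construct the explicit bounded $4$-cocycle as a cross-product of (alternating) Euler cocycles and compute its value on the fundamental class, getting $\langle \eurm_b \times \eurm_b, [\Sigma_g]\times[\Sigma_h]\rangle$ proportional to $\chi(\Sigma_g)\chi(\Sigma_h)$; (3) carry out the combinatorial optimization showing the sup-norm of this cocycle is exactly $\tfrac23 \cdot \|\eurm_b\|_\infty^2$, which yields the lower bound $\|\Sigma_g \times \Sigma_h\| \ge \tfrac32 \|\Sigma_g\|\cdot\|\Sigma_h\|$; (4) produce the matching explicit efficient cycle for the upper bound. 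I expect step~(3) to be the main obstacle: bounding the sup-norm of a cup product from above by something \emph{strictly smaller} than the product of sup-norms is not formal and relies on the specific finite value set of the Euler cocycle; this is precisely Bucher's technical contribution, and in the present paper it is imported wholesale as Theorem~\ref{thm:bucher_prod_surf}, with the work of Section~\ref{sec:l1 norm of prod with surfaces} being the generalization replacing one surface factor by an arbitrary $2$-class $\alpha$.
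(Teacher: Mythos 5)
Your outline is essentially faithful to Bucher's argument, but be aware that this paper never proves Theorem~\ref{thm:bucher_prod_surf}: it is imported by citation, exactly as you note at the end, and the closest thing to a proof in the paper is the generalisation in Section~\ref{sec:l1 norm of prod with surfaces}, whose lower-bound half is precisely what you describe --- cross an extremal $2$-cocycle with the orientation cocycle $\rho^*\Or$, alternate (Section~\ref{subsec:alternating}), use that alternation preserves the bounded class (Proposition~\ref{prop:alternating cocycles}) so the pairing with the fundamental class survives, and bound the sup-norm of the alternated cocycle by $2/3$ (Claim~\ref{claim:norm of theta 2/3}); note that only the inequality $\leq 2/3$ is needed, not the equality you assert, and that in the paper's normalisation the extremal cocycle is $\rho^*\Or$ with $\|\rho^*\Or\|_\infty = 1$ and $\langle[\rho^*\Or],[\Sigma_g]_\R\rangle = 4g-4$ (Example~\ref{exmp:Euler cocycle is extremal}), so the whole factor $3/2$ comes from the $2/3$ bound rather than from any Milnor--Wood-type constant. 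One genuine caution on your upper bound: the ``alternative'' route ``in the spirit of this paper'' via Proposition~\ref{prop:l1 of 2-classes} is circular, because the improved product estimate (Corollary~\ref{cor:l1prod2improvedupperbound}) is itself deduced from Theorem~\ref{thm:bucher_prod_surf}, and for genuine surface fundamental classes the surface-presentation description simply returns the class itself; so the upper bound really does require Bucher's explicit efficient-cycle construction, which your step~(4) names but does not carry out. As a plan your proposal is the right one (and matches the source the paper cites), but both of its hard steps --- the combinatorial $2/3$ estimate and the efficient fundamental cycle --- remain to be executed.
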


We will generalise this theorem in Section~\ref{sec:l1 norm of prod with surfaces}. 
For now, let us note that in combination with the description of the
$l^1$-semi-norm in degree~$2$ in terms of surfaces, we obtain the
following general, improved, upper bound:

\begin{corr}\label{cor:l1prod2improvedupperbound}
  Let $X$ and $Y$ be path-connected topological spaces and
  let $\alpha \in H_2(X;\R)$, $\beta \in H_2(Y;\R)$. Then
  \[ \|\alpha \times \beta \|_1
     \leq \frac32 \cdot \|\alpha\|_1 \cdot \|\beta \|_1.
  \]
\end{corr}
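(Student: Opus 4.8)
The plan is to combine Proposition~\ref{prop:l1 of 2-classes} with Bucher's exact product formula (Theorem~\ref{thm:bucher_prod_surf}) in the usual way that functorial semi-norms interact with such surface descriptions. First I would fix $\epsilon > 0$ and, using Proposition~\ref{prop:l1 of 2-classes}, choose surface representatives for $\alpha$ and $\beta$: orientable closed connected surfaces $\Sigma_{(1)}, \dots, \Sigma_{(k)}$ with continuous maps $f_j \colon \Sigma_{(j)} \to X$ and coefficients $a_j \in \R \setminus \{0\}$ so that $\sum_j a_j \cdot H_2(f_j;\R)[\Sigma_{(j)}]_\R = \alpha$ and $\sum_j |a_j| \cdot \|\Sigma_{(j)}\| \leq \|\alpha\|_1 + \epsilon$, and likewise surfaces $\Xi_{(1)}, \dots, \Xi_{(\ell)}$ with maps $h_i \colon \Xi_{(i)} \to Y$ and coefficients $b_i$ representing $\beta$ with $\sum_i |b_i| \cdot \|\Xi_{(i)}\| \leq \|\beta\|_1 + \epsilon$.

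Next I would form the products $\Sigma_{(j)} \times \Xi_{(i)}$ together with the maps $f_j \times h_i \colon \Sigma_{(j)} \times \Xi_{(i)} \to X \times Y$. Since the homological cross-product is bilinear and natural, $\sum_{j,i} a_j b_i \cdot H_4(f_j \times h_i;\R)\bigl([\Sigma_{(j)}]_\R \times [\Xi_{(i)}]_\R\bigr) = \alpha \times \beta$ in $H_4(X \times Y;\R)$. Using $[\Sigma_{(j)} \times \Xi_{(i)}]_\R = [\Sigma_{(j)}]_\R \times [\Xi_{(i)}]_\R$ and subadditivity of the $l^1$-semi-norm under sums of such cycles (each product surface carries a fundamental cycle realising $\|\Sigma_{(j)} \times \Xi_{(i)}\|$, scaled by $|a_j b_i|$, and pushed forward along $f_j \times h_i$ without increasing norm by Remark~\ref{rmk:l1functorial}), I get
\[
\|\alpha \times \beta\|_1 \leq \sum_{j,i} |a_j|\,|b_i| \cdot \|\Sigma_{(j)} \times \Xi_{(i)}\|.
\]
Now Theorem~\ref{thm:bucher_prod_surf} gives $\|\Sigma_{(j)} \times \Xi_{(i)}\| = \tfrac32 \cdot \|\Sigma_{(j)}\| \cdot \|\Xi_{(i)}\|$, so the right-hand side factors as $\tfrac32 \bigl(\sum_j |a_j| \|\Sigma_{(j)}\|\bigr)\bigl(\sum_i |b_i| \|\Xi_{(i)}\|\bigr) \leq \tfrac32 (\|\alpha\|_1 + \epsilon)(\|\beta\|_1 + \epsilon)$. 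Letting $\epsilon \to 0$ yields the claimed bound.

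There is one technical subtlety that is the main thing to get right rather than a genuine obstacle: Bucher's formula as stated assumes genus at least~$1$, i.e.\ it excludes the sphere, and the infimum in Proposition~\ref{prop:l1 of 2-classes} a priori allows any orientable closed connected surface. However, any $S^2$-summands can be discarded harmlessly, since $\|S^2\| = 0$ means the corresponding fundamental class is trivial in real homology and contributes neither to the representation of $\alpha$ (resp.\ $\beta$) nor to the norm sum; so without loss of generality every $\Sigma_{(j)}$ and $\Xi_{(i)}$ has genus $\geq 1$, and Theorem~\ref{thm:bucher_prod_surf} applies. One should also note that the approximating families from Proposition~\ref{prop:l1 of 2-classes} are finite, so the double sum above is finite and all the manipulations are legitimate. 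With these points addressed, the argument is essentially a bilinearity-plus-Bucher computation and requires no further input.
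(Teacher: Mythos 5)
Your main line of argument is exactly the paper's proof of Corollary~\ref{cor:l1prod2improvedupperbound}: surface presentations from Proposition~\ref{prop:l1 of 2-classes}, bilinearity and naturality of the homological cross-product, functoriality of the $l^1$-semi-norm (Remark~\ref{rmk:l1functorial}), Bucher's formula (Theorem~\ref{thm:bucher_prod_surf}), and then passing to the infimum (your $\epsilon$-approximation is the same step in different clothing).

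The one place where your write-up goes wrong is the sphere remark. It is \emph{not} true that $\|S^2\| = 0$ forces the fundamental class to be trivial in real homology: $H_2(S^2;\R) \cong \R$, and the pushed-forward class $H_2(f_j;\R)[S^2]_\R$ can be non-zero in $H_2(X;\R)$ (take $X = S^2$ and $f_j = \id$). So simply discarding $S^2$-summands may destroy the identity $\sum_j a_j \cdot H_2(f_j;\R)[\Sigma_{(j)}]_\R = \alpha$, and as written this step fails. The subtlety is easy to repair, in either of two ways: replace each sphere summand $f_j \colon S^2 \to X$ by $f_j \circ p \colon T^2 \to X$, where $p \colon T^2 \to S^2$ is a degree-one collapse map; this preserves the represented class and the (zero) contribution $|a_j| \cdot \|T^2\| = |a_j| \cdot \|S^2\|$ to the norm sum, after which every surface has genus $\geq 1$ and Theorem~\ref{thm:bucher_prod_surf} applies verbatim. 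Alternatively, note that a product of $S^2$ (or $T^2$) with any orientable closed connected surface admits self-maps of degree $>1$ and hence has vanishing simplicial volume, so the bound $\|\Sigma \times \Xi\| \leq \frac32 \cdot \|\Sigma\| \cdot \|\Xi\|$ holds trivially (as $0 \leq 0$) for these terms. With either fix your argument is complete and coincides with the paper's; the paper itself does not comment on this genus issue at all.
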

\begin{proof}
  We use the description of~$\|\alpha\|_1$ and $\|\beta\|_1$ from
  Proposition~\ref{prop:l1 of 2-classes}.
  Let
  \begin{align*}
    \alpha
    = \sum_{i=1}^k a_i \cdot H_2(f_i;\R)[\Sigma_{(i)}]_\R
    \qand 
    \beta
    = \sum_{j=1}^m b_j \cdot H_2(g_j;\R)[\Pi_{(j)}]_\R
  \end{align*}
  be surface presentations of~$\alpha$ and $\beta$ as in
  Proposition~\ref{prop:l1 of 2-classes}. Then
  \[ H_4(f_i \times g_j;\R) [\Sigma_{(i)} \times \Pi_{(j)}]_\R
   = H_2(f_i;\R)[\Sigma_{(i)}]_\R \times H_2(g_j;\R)[\Pi_{(j)}]_\R,
  \]
  and so
  $
    \alpha \times \beta
    =
    \sum_{i=1}^k \sum_{j=1}^m a_i \cdot b_j \cdot H_2(f_i \times g_j;\R)[\Sigma_{(i)} \times \Pi_{(j)}]_\R.
  $ 
  Therefore, by applying the triangle inequality, functoriality
  (Remark~\ref{rmk:l1functorial}), and Theorem~\ref{thm:bucher_prod_surf},
  we have 
  \begin{align*}
    \|\alpha \times \beta\|_1
    & \leq
    \sum_{i=1}^k \sum_{j=1}^m |a_i| \cdot |b_j|
    \cdot  H_2(f_i \times g_j;\R)[\Sigma_{(i)} \times \Pi_{(j)}]_\R
    \\
    & \leq
    \sum_{i=1}^k \sum_{j=1}^m |a_j| \cdot |b_j| \cdot \frac32 \cdot \|\Sigma_{(i)}\| \cdot \|\Pi_{(j)}\|
    \\
    & =
    \frac32 \cdot 
    \sum_{i=1}^k |a_j| \cdot \|\Sigma_{(i)}\|
    \cdot
    \sum_{j=1}^m |b_j| \cdot \|\Pi_{(j)}\|.
  \end{align*}
  By Proposition~\ref{prop:l1 of 2-classes}, taking the infimum over
  all such surface presentations of~$\alpha$ and $\beta$, we obtain
  $\| \alpha \times \beta \|_1
  \leq 3/2 \cdot \|\alpha\|_1 \cdot \|\beta\|_1.
  $
\end{proof}

\section{Bounded cohomology}\label{sec:bc}

Bounded cohomology of discrete
groups and topological spaces was first systematically studied by
Gromov~\cite{vbc}. Gromov established the fundamental properties of
bounded cohomology using so-called multicomplexes. Later, Ivanov
developed a more algebraic framework via
resolutions~\cite{ivanov,ivanovTNG}.  

The reference to this introduction is the recent book by Frigerio \cite{Frigerio}.
Having applications to stable commutator length and the $l^1$-semi-norm in mind we will only define bounded cohomology for trivial real and integer coefficients.

Sections \ref{subsubsec:BC Groups}, \ref{subsubsec:BC spaces} and \ref{subsec:relationship bc groups and space} discuss the (relationships between) bounded cohomology of groups and topological spaces.
In Section \ref{subsec:duality} we state the duality principle, which allows us to compute the $l^1$ semi-norm. In Section \ref{subsec:euler class} we define the Euler class.

\subsection{Bounded cohomology of groups} \label{subsubsec:BC Groups}
Let $V$ be $\R$ or $\Z$ and let $G$ be a group. We will define the bounded cohomology $H^n_b(G;V)$ of $G$ using the \emph{homogeneous resolution}.
There is also an \emph{inhomogeneous resolution}, which is useful in low dimensions. We use this resolution only in Section \ref{sec:thompson} for central extensions and refer to the literature~\cite[Chapter~1.7]{Frigerio} for the definition.

Let $C^n(G;V) := \map(G^{n+1}, V)$ be the set of set-theoretic maps from $G^{n+1}$ to $V$. 
The group~$G$ acts on $C^n(G;V)$ via
$g \cdot \phi(g_0, \ldots, g_n) = \phi(g^{-1} \cdot g_0, \ldots, g^{-1} \cdot g_n)$.
We denote by $C^n(G;V)^G$ the subset of elements in $C^n(G;V)$ that are invariant under this action.
Let $\| \cdot \|_\infty$ be the $l^\infty$-norm on $C^n(G;V)$ and let $C^n_b(G;V)$ be the corresponding subspaces of \emph{bounded} functions.

Define the simplicial coboundary maps $\delta^n \col C^n(G;V) \to C^{n+1}(G;V)$ via
$$
\delta^n(\alpha)(g_0, \ldots, g_{n+1}) := \sum_{i=0}^{n+1} (-1)^i \cdot \alpha(g_0, \ldots, \widehat{g_i}, \ldots, g_{n+1})
$$ 
where 
$\alpha(g_0, \ldots, \widehat{g_i}, \ldots, g_{n+1})$ means that the $i$-th coordinate is omitted. 
Then $\delta^n$ restricts to a map $C^n_b(G;V) \to C^{n+1}_b(G;V)$.
The cohomology of the cochain complex~$(C^{\bullet}(G;V)^G, \delta^\bullet)$ is the 
\emph{group cohomology of~$G$ with coefficients in $V$} and denoted by $H^\bullet(G;V)$. Similarly, the cohomology of the cochain complex~$(C_b^{\bullet}(G;V)^G, \delta^\bullet)$ is the 
\emph{bounded cohomology of $G$ with coefficients in $V$} and denoted by $H_b^\bullet(G;V)$.
The embedding $C_b^{\bullet}(G;\R) \inj C^{\bullet}(G;\R)$ induces a map~$c^{\bullet} \col H_b^{\bullet}(G;V) \to H^{\bullet}(G;V)$, the \emph{comparison map}.

Bounded cohomology carries additional structure, the semi-norm induced by~$\|\cdot\|_\infty$:
For an element $\alpha \in H^n_b(G;V)$, we set 
$$
\| \alpha \| := \inf \bigl\{ \| \beta \|_\infty \bigm| \beta \in C^n_b(G;V),\
  \delta^n\beta =0,\ [\beta] = \alpha \in H^n_b(G;V) \bigr\}.
$$
  Bounded cohomology is functorial in both the group and the coefficients.

\subsection{Bounded cohomology of spaces} \label{subsubsec:BC spaces}

Let $X$ be a topological space and let $S_n(X)$ be the set of singular $n$-simplices in~$X$. Moreover, let $C^n(X;V)$ be the set of maps from $S_n(X)$ to $V$. For an element $\alpha \in C^n(X;V)$ we set  
$$
\| \alpha \|_\infty := \sup \bigl\{|\alpha(\sigma)| \bigm| \sigma \in S_n(X) \bigr\} \in [0, \infty ]
$$
and let $C^n_b(X;V) \subset C^n(X;V)$ be the subset of elements that are bounded with respect to this norm.
Let $\delta^n \col C^n_b(X;V) \to C^{n+1}_b(X;V)$ be the restriction of the singular coboundary map to bounded cochains.
Then the \emph{bounded cohomology $H^{\bullet}_b(X;V)$ of $X$ with coefficients in $V$} is the cohomology of the complex~$(C^\bullet_b(X;V), \delta^\bullet)$ and denoted by~$H^{\bullet}_b(X;V)$.
For $\alpha \in H^n_b(X;V)$ we define 
$$
\| \alpha \|_{\infty} = \inf \bigl\{ \| \beta \|_\infty \bigm| \beta \in C^n_b(X;V),\ \delta^n\beta =0, [\beta] = \alpha \in H^n_b(X;V) \}
$$
and observe that $\| \cdot \|_\infty$ is a semi-norm on $H^n_b(X;V)$.
The bounded cohomology of spaces is also functorial in both spaces and coefficients.

\subsection{Relationship between bounded cohomology of groups and spaces} \label{subsec:relationship bc groups and space}

Analogously to ordinary group cohomology, bounded cohomology of groups may also be computed using classifying spaces (and thus, we will freely switch between these descriptions).
\begin{thm}[\protect{\cite[Theorem 5.5]{Frigerio}}]
Let $X$ be a model of the classifying space~$BG$ of the group~$G$.
Then $H^\bullet_b(X;\R)$ is canonically 
isometrically isomorphic to $H^\bullet_b(G;\R)$.
\end{thm}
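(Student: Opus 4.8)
The plan is to invoke Ivanov's homological-algebra description of bounded cohomology: for a discrete group~$G$, the bounded cohomology $H^\bullet_b(G;\R)$ \emph{together with its semi-norm} is the cohomology of the complex of $G$-invariants of \emph{any} strong relatively injective resolution of the trivial Banach $G$-module~$\R$; moreover, a morphism between two such resolutions extending $\id_\R$ may always be chosen norm-nonincreasing, and since one then obtains norm-nonincreasing maps in both directions inverse to each other in cohomology, the resulting isomorphism is automatically isometric. The homogeneous bar complex $(C^\bullet_b(G;\R),\delta^\bullet)$ from Section~\ref{subsubsec:BC Groups} is the standard example of such a resolution, so it suffices to exhibit a second strong relatively injective resolution of~$\R$ whose $G$-invariants compute $H^\bullet_b(X;\R)$.

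To this end, let $p\colon EG \to X$ be the universal cover of the chosen model~$X$ of~$BG$; then $G$ acts freely (and properly discontinuously) on~$EG$ by deck transformations, $EG$ is contractible, and $X = EG/G$. Pulling back along~$p$ identifies $C^n_b(X;\R)$ isometrically with the subspace $C^n_b(EG;\R)^G$ of $G$-invariant bounded singular cochains: a bounded cochain on~$X$ pulls back to a $G$-invariant one, and conversely a $G$-invariant bounded cochain on~$EG$ is constant on $G$-orbits of singular simplices and hence descends to~$X$. Thus the assertion reduces to showing that the augmented complex
\[ 0 \to \R \to C^0_b(EG;\R) \xrightarrow{\delta^0} C^1_b(EG;\R) \xrightarrow{\delta^1} \cdots \]
is a strong relatively injective resolution of~$\R$ by Banach $G$-modules.

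Two properties must be verified. First, strong exactness: fixing a basepoint $x_0\in EG$ and using contractibility of~$EG$, the usual cone construction yields a chain homotopy on $C_\bullet(EG;\R)$ between the identity and the projection onto~$x_0$ whose operators have norm~$1$ on singular chains; dualising produces a contracting homotopy of $(C^\bullet_b(EG;\R),\delta^\bullet)$ of norm~$\le 1$, which simultaneously establishes exactness and the ``strong'' condition. Second, relative injectivity of each $C^n_b(EG;\R)$: since $G$ acts freely on~$EG$ it acts freely on the set $S_n(EG)$ of singular $n$-simplices, so a choice of $G$-orbit representatives gives a $G$-isomorphism $C^n_b(EG;\R)=\ell^\infty(S_n(EG);\R)\cong \ell^\infty\!\bigl(G;\ell^\infty(S_n(EG)/G;\R)\bigr)$, and Banach $G$-modules of the form $\ell^\infty(G;V)$ are relatively injective, the required norm-nonincreasing extension being given by the standard evaluation-at-$e$ formula. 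Feeding these facts into the fundamental lemma of homological algebra for Banach $G$-modules and comparing with the homogeneous resolution in both directions yields the canonical isometric isomorphism $H^\bullet_b(X;\R)\cong H^\bullet_b(G;\R)$.

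The main obstacle is the bookkeeping of norms throughout the argument: the point of the theorem is the \emph{isometry}, not merely a topological isomorphism, so one must ensure that the contracting homotopy coming from the cone construction and the extension maps witnessing relative injectivity are genuinely norm-nonincreasing, since only then does the abstract comparison of resolutions produce an isometric — rather than merely bi-Lipschitz — identification. A secondary technical point is the passage between singular simplices in~$EG$ and in~$X$ and the verification that invariant cochains descend, which relies precisely on the fact that the deck-transformation action of~$G$ on~$EG$ is free, hence free on singular simplices.
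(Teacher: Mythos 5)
Your overall strategy is the standard one (the paper itself does not prove this statement but cites Frigerio's Theorem~5.5, whose proof runs along the same lines): identify $C^\bullet_b(X;\R)$ isometrically with the $G$-invariants of $C^\bullet_b(EG;\R)$ via the universal covering, verify that the augmented complex $C^\bullet_b(EG;\R)$ is a strong relatively injective resolution of~$\R$ (the cone construction does give a contracting homotopy of norm~$\leq 1$ because $EG$ is contractible, and freeness of the deck action on singular simplices gives relative injectivity), and then compare with the homogeneous resolution. All of these steps are correct.

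The gap is the sentence ``a morphism between two such resolutions extending $\id_\R$ may always be chosen norm-nonincreasing''. That is not what the theory provides. The fundamental lemma for Banach $G$-modules produces comparison morphisms, unique up to $G$-homotopy, but with no norm control; the inductive extension step composes with coboundaries and the (non-equivariant) contracting homotopy, so the norms are not bounded by~$1$ degree by degree. The only norm-controlled comparison available in general is Ivanov's: from an \emph{arbitrary} strong resolution there is a norm non-increasing $G$-morphism \emph{into} the homogeneous resolution. Applied to $C^\bullet_b(EG;\R)$ this gives one inequality, namely that the seminorm on $H^\bullet_b(G;\R)$ is at most the seminorm coming from $H^\bullet_b(X;\R)$, but not the converse. (If your blanket claim were true, every strong relatively injective resolution would compute the canonical seminorm isometrically; that this is \emph{not} automatic is exactly the delicate point of the theory -- it is why, e.g., the isometric statement for resolutions arising from amenable actions is a genuine theorem of Burger--Monod.) To close the argument you must exhibit an explicit norm non-increasing $G$-morphism $C^\bullet_b(G;\R) \to C^\bullet_b(EG;\R)$ extending $\id_\R$: for instance, dualise the $G$-equivariant chain map $C_\bullet(EG;\R) \to C_\bullet(G;\R)$ of norm one that records, for each singular simplex, which translates of a set-theoretic fundamental domain contain its vertices -- precisely the map $\psi$ appearing in Remark~\ref{rem:fillcal}. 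Since this morphism extends $\id_\R$, it induces the canonical map in cohomology by uniqueness up to homotopy, and combined with Ivanov's map in the other direction it yields the asserted isometry.
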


Remarkably, this statement holds true much more generally: every topological
space with the correct fundamental group can be used to compute bounded cohomology
of groups; moreover, bounded cohomology ignores amenable kernels~\cite{vbc}:

\begin{thm}[\protect{\cite[Theorem 5.8]{Frigerio}\cite{ivanovTNG}}]
  Let $X$ be a path-connected space. Then $H^\bullet_b(X;\R)$ is
  canonically isometrically isomorphic to $H^\bullet_b(\pi_1(X);\R)$.
\end{thm}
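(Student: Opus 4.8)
The statement to prove is that for a path-connected space $X$, the bounded cohomology $H^\bullet_b(X;\R)$ is canonically isometrically isomorphic to $H^\bullet_b(\pi_1(X);\R)$. The plan is to reduce the general case to the already-available result for classifying spaces (the preceding theorem, \cite[Theorem 5.5]{Frigerio}), and to do so it suffices to produce a map $f \colon X \to B\pi_1(X)$ inducing an isomorphism on $\pi_1$ and to show that such a map induces an isometric isomorphism on bounded cohomology. First I would fix a model of $B\pi_1(X)$ together with a continuous map $f\colon X \to B\pi_1(X)$ realizing the identity on fundamental groups (such a map exists by obstruction theory, since $B\pi_1(X)$ is aspherical). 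By functoriality of bounded cohomology of spaces, $f$ induces a norm-nonincreasing map $H^\bullet_b(f;\R)\colon H^\bullet_b(B\pi_1(X);\R)\to H^\bullet_b(X;\R)$; composing with the isometric isomorphism $H^\bullet_b(\pi_1(X);\R)\cong H^\bullet_b(B\pi_1(X);\R)$ of the previous theorem yields a norm-nonincreasing homomorphism $H^\bullet_b(\pi_1(X);\R)\to H^\bullet_b(X;\R)$, and the whole game is to show it is an isometric isomorphism.

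The heart of the argument — and the part I expect to be the main obstacle — is the fact that bounded cohomology is \emph{insensitive to the higher homotopy of~$X$}: the map $H^\bullet_b(f;\R)$ is an isometric isomorphism even though $f$ is typically far from a homotopy equivalence. I would prove this by the mapping-telescope/Moore–Postnikov approach used by Gromov and Ivanov: build $X$ from $B\pi_1(X)$ (or vice versa) by successively attaching cells of dimension $\geq 3$ to kill the higher homotopy groups, i.e. pass through the stages of a relative Postnikov tower. At each stage one attaches cells only in dimensions $\geq n$ for $n\geq 3$; the key point is that attaching such high-dimensional cells changes neither the bounded cochain complex in low degrees in a controlled way nor, more importantly, the bounded cohomology at all — because the relevant relative terms have vanishing (semi-)normed bounded cohomology, a consequence of the amenability (indeed triviality/simple connectivity of the attached pieces relative to the base) combined with the excision/Mayer–Vietoris behaviour of bounded cohomology. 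Concretely, the technical engine is Ivanov's theorem that the inclusion of a space into one obtained by attaching cells killing $\pi_n$ for $n\geq 2$ induces an isometric isomorphism on $H^\bullet_b$; iterating and passing to the colimit (using that bounded cohomology commutes appropriately with the telescope, again because the relevant $\lim^1$-type obstructions vanish isometrically) gives the result.

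Once this insensitivity statement is in hand, the proof concludes quickly: $H^\bullet_b(f;\R)$ is an isometric isomorphism, and composing with the already-established isometric isomorphism for classifying spaces gives the desired canonical isometric isomorphism $H^\bullet_b(X;\R)\cong H^\bullet_b(\pi_1(X);\R)$. Canonicity follows because $f$ is unique up to homotopy among maps inducing the identity on $\pi_1$ (by asphericity of the target), and homotopic maps induce the same map on bounded cohomology. The one genuine subtlety to be careful about is the colimit step — ensuring that the isometry (not merely the algebraic isomorphism) survives passage to the infinite telescope; this is handled by noting that each finite stage already gives an isometry and that the $l^\infty$-seminorm on bounded cohomology is computed compatibly along the directed system, so no seminorm is lost in the limit. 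I would cite \cite{vbc} and \cite{ivanovTNG} for the detailed verification of these steps rather than reproducing them here.
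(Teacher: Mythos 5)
The paper does not prove this statement at all: it is quoted as a known result with references to Frigerio's book and to Ivanov, so the only ``paper proof'' is the citation. Your reduction step (choose a classifying map $f\colon X\to B\pi_1(X)$ inducing an isomorphism on $\pi_1$, note it is canonical up to homotopy, and combine with the isometric identification $H^\bullet_b(B\pi_1(X);\R)\cong H^\bullet_b(\pi_1(X);\R)$) is fine and standard. The problem is the core step, the insensitivity of $H^\bullet_b$ to higher homotopy.

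There is a genuine gap in how you propose to prove that step. You argue that attaching cells of dimension $\geq 3$ leaves bounded cohomology unchanged ``because the relevant relative terms vanish \ldots combined with the excision/Mayer--Vietoris behaviour of bounded cohomology.'' Bounded cohomology has \emph{no} excision and no Mayer--Vietoris sequence; this failure is fundamental (e.g.\ $H^2_b(S^1\vee S^1;\R)$ is infinite-dimensional although both circles and their intersection have trivial bounded cohomology, contradicting any naive Mayer--Vietoris argument), and it is precisely why the theorem you are proving is nontrivial. Even granting the deep vanishing theorem for simply connected spaces, the long exact sequence of the pair $(X\cup\text{cells},X)$ does not let you conclude, because you cannot compute the relative term by excising down to the cells. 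Moreover, the ``technical engine'' you invoke --- that attaching cells killing $\pi_n$, $n\geq 2$, induces an isometric isomorphism on $H^\bullet_b$ --- is not an independent input: it is essentially equivalent to the mapping theorem/the statement itself, so quoting it makes the argument circular. The actual proofs in the cited sources proceed differently: Ivanov (and Frigerio, Theorem~5.8) show that the bounded singular cochain complex of the universal cover $\widetilde X$ is a strong relatively injective resolution of $\R$ over $\pi_1(X)$, the key ingredient being the vanishing of bounded cohomology of simply connected spaces, proved via a tower of principal fibrations with abelian (hence amenable) structure groups and averaging --- not via cell attachment and excision; the isometric identification with $H^\bullet_b(\pi_1(X);\R)$ then follows from the uniqueness machinery for such resolutions (Gromov's original route uses multicomplexes instead). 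To repair your sketch you would have to replace the excision/Mayer--Vietoris step by this resolution-theoretic (or multicomplex) argument, at which point you are simply reproducing the cited proofs.
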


\begin{thm}[mapping theorem~\protect{\cite[Corollary~5.11]{Frigerio}}\cite{ivanovTNG}]\label{thm:mappingtheorem}
  Let $f \colon X \longrightarrow Y$ be a continuous map between
  path-connected topological spaces.  If the induced
  homomorphism~$\pi_1(f) \colon \pi_1(X) \longrightarrow \pi_1(Y)$ is
  surjective and has amenable kernel, then $H^{\bullet}_b(f;\R) \colon H^{\bullet}_b(Y;\R)
  \longrightarrow H^{\bullet}_b(X;\R)$ is an isometric isomorphism.
\end{thm}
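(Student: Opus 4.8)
The plan is to reduce the assertion to a statement about the bounded cohomology of groups and then run Ivanov's relative-homological-algebra argument, with amenability entering through an invariant mean on the kernel.

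First, by the two preceding theorems the bounded cohomology of a path-connected space is canonically and isometrically isomorphic to that of its fundamental group, and this identification is natural with respect to continuous maps. Hence, under these isomorphisms, $H^\bullet_b(f;\R)$ corresponds to the inflation map along the surjection $\pi_1(f)\colon \pi_1(X)\twoheadrightarrow \pi_1(Y)$. So it suffices to prove: if $\varphi\colon G\twoheadrightarrow Q$ is a surjective group homomorphism whose kernel $N$ is amenable, then the inflation map $H^\bullet_b(\varphi;\R)\colon H^\bullet_b(Q;\R)\longrightarrow H^\bullet_b(G;\R)$ is an isometric isomorphism.

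For the group statement I would work with the homogeneous bounded resolution $\bigl(C^\bullet_b(G;\R),\delta^\bullet\bigr)$, i.e.\ $C^n_b(G;\R)=\ell^\infty(G^{n+1})$ with the diagonal $G$-action; by the standard theory this is a strong resolution of the trivial module $\R$ by relatively injective $G$-modules, and bounded cohomology is computed isometrically by the complex of $G$-invariants of any such resolution. Consider the subcomplex $\bigl(C^\bullet_b(G;\R)^N,\delta^\bullet\bigr)$ of $N$-invariant cochains: the residual group $Q=G/N$ acts on it, and its $Q$-invariants are exactly $C^\bullet_b(G;\R)^G$. Two points must be checked. \emph{(i) Exactness.} The usual norm-nonincreasing contracting homotopy $(h\psi)(g_0,\dots,g_{n-1})=\psi(e,g_0,\dots,g_{n-1})$ of $C^\bullet_b(G;\R)$ is not $N$-equivariant, so it does not restrict to $N$-invariants; here amenability is used. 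Fixing a left-invariant mean $m$ on $N$, I would replace $h$ by the averaged operator $\psi\mapsto \int_N n\cdot(h\psi)\,dm(n)$: since $\delta^\bullet$ is $G$-equivariant and does not touch the averaging variable, this averaged operator is again a contracting homotopy, it sends $N$-invariant cochains to $N$-invariant cochains (by invariance of $m$), and integrating against a mean does not increase the $\ell^\infty$-norm, so $\bigl(C^\bullet_b(G;\R)^N,\delta^\bullet\bigr)$ is still a strong resolution of $\R$. \emph{(ii) Relative injectivity over $Q$.} Choosing a set-theoretic section $s\colon Q\to G$ of $\varphi$ gives a bijection $G\cong N\times Q$, hence $G^{n+1}\cong N^{n+1}\times Q^{n+1}$, under which the diagonal $N$-action becomes diagonal left translation on $N^{n+1}$; passing to $N$-invariants identifies $C^n_b(G;\R)^N$ with an $\ell^\infty$-space on which the residual $Q$-action restricts, on one tensor factor, to left translation of $Q$ on itself, so each term has the shape of a relatively injective $Q$-module.

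Granting (i) and (ii), $\bigl(C^\bullet_b(G;\R)^N,\delta^\bullet\bigr)$ is a strong resolution of $\R$ by relatively injective $Q$-modules, so by the fundamental lemma of relative homological algebra its $Q$-invariants compute $H^\bullet_b(Q;\R)$ isometrically, giving a canonical isometric isomorphism $H^\bullet_b(Q;\R)\cong H^\bullet\bigl(C^\bullet_b(G;\R)^G\bigr)=H^\bullet_b(G;\R)$. Finally one identifies this canonical isomorphism with inflation: the chain map $C^\bullet_b(Q;\R)\to C^\bullet_b(G;\R)^N$ induced by $\varphi^{\bullet+1}\colon G^{\bullet+1}\to Q^{\bullet+1}$ is a norm-nonincreasing $Q$-morphism of resolutions extending $\id_\R$, hence \emph{is} the canonical comparison up to $Q$-homotopy, and on $Q$-invariants it induces $H^\bullet_b(\varphi;\R)$. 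The main obstacle is step (i): making the passage to $N$-invariants preserve exactness with norm control — this is precisely where the invariant mean on the amenable group $N$ is indispensable. Step (ii) and the identification with inflation are routine bookkeeping.
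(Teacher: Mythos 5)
The paper does not prove this theorem at all: it is quoted from the literature (Ivanov; Frigerio, Corollary~5.11), so there is no in-paper argument to compare with. Your sketch is essentially the standard proof from those sources: reduce to the group-theoretic statement via the natural isometric isomorphism $H^\bullet_b(X;\R)\cong H^\bullet_b(\pi_1(X);\R)$, and then show that $C^\bullet_b(G;\R)^N$ is a strong resolution of~$\R$ by relatively injective $Q$-modules, with the invariant mean on the amenable kernel entering exactly where you say it does (averaging the cone homotopy; your computation that left-invariance of the mean preserves $N$-invariance is the right one).

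Two points need repair, though. First, the isometry does \emph{not} follow from the fundamental lemma of relative homological algebra alone: in general a strong resolution by relatively injective modules computes $H^\bullet_b(Q;\R)$ only up to a possibly larger seminorm (the canonical seminorm is merely minimal among the induced ones). The standard fix is the complementary fact that every strong resolution admits a norm non-increasing morphism to the standard resolution extending $\id_\R$; applied to $C^\bullet_b(G;\R)^N$ this makes the canonical isomorphism norm non-increasing in the direction $H^\bullet_b(G;\R)\to H^\bullet_b(Q;\R)$, and combined with the norm non-increasing inflation map (which, as you note, realises the canonical comparison up to homotopy) it yields isometry in both directions. Second, in your step~(ii) the identification $G^{n+1}\cong N^{n+1}\times Q^{n+1}$ via a set-theoretic section does not turn the residual $Q$-action into a product action: the $Q^{n+1}$-coordinates are indeed translated, but the $N$-part is twisted by a section-dependent cocycle, so ``has the shape of a relatively injective $Q$-module'' is not yet an argument. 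The cleaner (and standard) route avoids the section entirely: if $E$ is a relatively injective $G$-module and $N\trianglelefteq G$, then $E^N$ is relatively injective as a $Q$-module --- view a diagram of $Q$-modules as $G$-modules via the projection, extend over~$G$ with the same norm bound, and observe that the extension automatically takes values in~$E^N$ and is $Q$-equivariant (no amenability needed here). With these two repairs your argument is exactly the proof given in the cited references.
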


\subsection{Duality} \label{subsec:duality}

Bounded cohomology of groups and spaces may be used to compute the $l^1$-semi-norm of homology classes.
For what follows, let $\langle \cdot, \cdot \rangle \col H^n_b(X;V) \times H_n(X;V) \to V$ be the map given by evaluation of cochains on chains.

\begin{prop}[duality principle~\protect{\cite[Lemma~6.1]{Frigerio}}] \label{prop:duality simvol bc}
Let $X$ be a topological space and let $\alpha \in H_n(X; \R)$. Then
$$
\| \alpha \|_1  =  \sup \bigl\{ \langle \beta , \alpha \rangle \bigm| \beta \in H^n_b(X; \R),\ \| \beta \|_\infty \leq 1 \bigr\}.
$$
Moreover, the supremum is achieved.
\end{prop}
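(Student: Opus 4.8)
The plan is to establish the two inequalities in the displayed formula separately; the nontrivial inequality and the fact that the supremum is attained will both come out of a single Hahn--Banach argument.

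First I would dispatch the inequality $\sup\{\,\langle\beta,\alpha\rangle \mid \|\beta\|_\infty\le 1\,\}\le\|\alpha\|_1$. Fix $\alpha\in H_n(X;\R)$, a class $\beta\in H^n_b(X;\R)$ with $\|\beta\|_\infty\le 1$, and a cycle $z\in C_n(X;\R)$ with $\partial z=0$ and $[z]=\alpha$. The pairing $\langle\beta,\alpha\rangle$ is computed by evaluating any bounded cocycle representing $\beta$ on $z$; this is independent of the choices, since a coboundary $\delta\gamma$ pairs with $z$ to $\langle\gamma,\partial z\rangle=0$ and a bounded cocycle pairs to $0$ with any boundary. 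Hence $|\langle\beta,\alpha\rangle|\le\|\beta\|_\infty\cdot|z|_1\le|z|_1$. Taking the infimum over all such $z$ gives $\langle\beta,\alpha\rangle\le\|\alpha\|_1$, and the supremum over $\beta$ gives the claimed inequality.

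For the reverse inequality it suffices to exhibit one class $\beta\in H^n_b(X;\R)$ with $\|\beta\|_\infty\le 1$ and $\langle\beta,\alpha\rangle=\|\alpha\|_1$; this simultaneously yields $\|\alpha\|_1\le\sup\{\dots\}$ and the attainment statement. If $\|\alpha\|_1=0$, take $\beta=0$. Otherwise, fix a cycle $z_0\in C_n(X;\R)$ with $[z_0]=\alpha$, and write $v:=\|\alpha\|_1=\inf\{\,|z_0+b|_1 \mid b\in B_n(X;\R)\,\}>0$, where $B_n(X;\R)$ denotes the subspace of boundaries. On the linear subspace $W:=B_n(X;\R)+\R\cdot z_0$ of $C_n(X;\R)$ I would define $\varphi\colon W\to\R$ by $\varphi(b+t\cdot z_0):=t\cdot v$. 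This is well defined: if $b+t z_0=b'+t'z_0$ with $t\ne t'$, then $z_0\in B_n(X;\R)$, contradicting $v>0$. Moreover $|\varphi(w)|\le|w|_1$ for all $w\in W$, because for $t\ne 0$ we have $|\varphi(b+tz_0)|=|t|\cdot v\le|t|\cdot|z_0+b/t|_1=|b+tz_0|_1$ by definition of $v$, and the case $t=0$ is trivial.

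Now I would apply Hahn--Banach to extend $\varphi$ to a linear functional $\overline{\varphi}$ on all of $C_n(X;\R)$ with $|\overline{\varphi}(c)|\le|c|_1$ for every chain $c$. Regarding $\overline{\varphi}$ as an element of $C^n(X;\R)$ via its values on singular simplices, evaluating this bound on the basis elements (single simplices, which have $l^1$-norm $1$) shows $\|\overline{\varphi}\|_\infty\le 1$; in particular $\overline{\varphi}\in C^n_b(X;\R)$. Since $\overline{\varphi}$ vanishes on $B_n(X;\R)$, where it agrees with $\varphi$, we get $\delta^n\overline{\varphi}=\overline{\varphi}\circ\partial_{n+1}=0$, so $\overline{\varphi}$ is a bounded cocycle and defines $\beta:=[\overline{\varphi}]\in H^n_b(X;\R)$ with $\|\beta\|_\infty\le\|\overline{\varphi}\|_\infty\le 1$. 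Finally $\langle\beta,\alpha\rangle=\overline{\varphi}(z_0)=\varphi(z_0)=v=\|\alpha\|_1$, completing the argument. The main obstacle is not the Hahn--Banach extension itself but the bookkeeping around it: one must put \emph{all} of $B_n(X;\R)$ into the seed subspace $W$ so that the extension automatically kills boundaries and is therefore a cocycle, and one must identify the $l^\infty$-norm of a cochain with its operator norm as a functional on $(C_n(X;\R),|\cdot|_1)$ so that the Hahn--Banach norm bound becomes precisely $\|\beta\|_\infty\le 1$.
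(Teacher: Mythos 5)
Your argument is correct: the easy inequality is handled properly (the bound passes to the infimum over representing cocycles), and the Hahn--Banach extension of the functional seeded on $B_n(X;\R)+\R\cdot z_0$ yields a bounded cocycle of sup-norm at most $1$ that evaluates to $\|\alpha\|_1$, giving both the reverse inequality and the attainment of the supremum. The paper does not prove this proposition itself but cites it (Frigerio, Lemma~6.1), and your proof is essentially the standard Hahn--Banach argument given there, so there is nothing to add.
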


Cocycles $\beta \in C^n_b(X,\R)$ that satisfy $\| \beta \|_\infty = 1$ and $\langle [\beta], \alpha \rangle = \| \alpha \|_1$ are called \emph{extremal for~$\alpha$}.

\begin{corr}[mapping theorem for the $l^1$-semi-norm]\label{cor:l1mappingtheorem}
  Let $f\colon X \longrightarrow Y$ be a continuous map between
  path-connected topological spaces. If the induced
  homomorphism~$\pi_1(f) \colon \pi_1(X) \longrightarrow \pi_1(Y)$ is
  surjective and has amenable kernel, then $H_{\bullet}(f;\R) \colon H_{\bullet}(X;\R)
  \longrightarrow H_{\bullet}(Y;\R)$ is isometric with respect to the
  $l^1$-semi-norm.
\end{corr}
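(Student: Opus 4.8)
The plan is to combine the duality principle (Proposition~\ref{prop:duality simvol bc}) with the mapping theorem in bounded cohomology (Theorem~\ref{thm:mappingtheorem}). Fix $n \in \N$ and $\alpha \in H_n(X;\R)$, and write $\bar\alpha := H_n(f;\R)(\alpha) \in H_n(Y;\R)$. Functoriality of the $l^1$-semi-norm (Remark~\ref{rmk:l1functorial}) already gives $\|\bar\alpha\|_1 \leq \|\alpha\|_1$, so the whole content is the reverse inequality $\|\alpha\|_1 \leq \|\bar\alpha\|_1$.

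For the reverse inequality I would argue on the dual side. By the duality principle applied to $X$, there is a class $\beta \in H^n_b(X;\R)$ with $\|\beta\|_\infty \leq 1$ and $\langle \beta, \alpha\rangle = \|\alpha\|_1$. Since $\pi_1(f)$ is surjective with amenable kernel, Theorem~\ref{thm:mappingtheorem} says that $H^n_b(f;\R)\colon H^n_b(Y;\R) \to H^n_b(X;\R)$ is an isometric isomorphism; in particular it is surjective and norm-preserving, so there is a class $\gamma \in H^n_b(Y;\R)$ with $H^n_b(f;\R)(\gamma) = \beta$ and $\|\gamma\|_\infty = \|\beta\|_\infty \leq 1$.

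The next step is naturality of the evaluation pairing: at the (co)chain level the pullback of cochains is adjoint to the pushforward of chains, and this adjunction passes to (bounded) cohomology and ordinary homology, so $\langle H^n_b(f;\R)(\gamma), \alpha\rangle = \langle \gamma, H_n(f;\R)(\alpha)\rangle = \langle \gamma, \bar\alpha\rangle$. Hence $\|\alpha\|_1 = \langle \beta, \alpha\rangle = \langle \gamma, \bar\alpha\rangle \leq \sup\bigl\{\langle \delta, \bar\alpha\rangle \bigm| \delta \in H^n_b(Y;\R),\ \|\delta\|_\infty \leq 1\bigr\} = \|\bar\alpha\|_1$, where the last equality is the duality principle applied to $Y$. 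Combining the two inequalities yields $\|\alpha\|_1 = \|\bar\alpha\|_1$, i.e.\ $H_n(f;\R)$ is isometric.

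There is no serious obstacle once Theorem~\ref{thm:mappingtheorem} is in hand; the only points requiring a little care are (i) that the isometric isomorphism of bounded cohomology lets one lift the extremal class $\beta$ to a class $\gamma$ on $Y$ with no increase of $\|\cdot\|_\infty$ — this is exactly what ``isometric isomorphism'' provides — and (ii) the compatibility of the Kronecker pairing with induced maps, a standard consequence of the chain/cochain adjunction that descends to (bounded) (co)homology. Alternatively, one can run the whole argument on the cochain level without invoking the ``supremum is achieved'' part of Proposition~\ref{prop:duality simvol bc}, approximating $\|\alpha\|_1$ from below by cocycles $\beta$ with $\|\beta\|_\infty \leq 1$ and $\langle [\beta],\alpha\rangle \geq \|\alpha\|_1 - \varepsilon$ and letting $\varepsilon \to 0$.
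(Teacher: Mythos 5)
Your argument is exactly the paper's intended one: the paper proves this corollary in a single line by ``combining the duality principle with the mapping theorem in bounded cohomology,'' and your proof simply spells out that combination correctly (functoriality for one inequality, lifting an extremal bounded class through the isometric isomorphism and using naturality of the Kronecker pairing for the other). The proposal is correct and follows essentially the same route as the paper.
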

\begin{proof}
  We only need to combine the duality principle (Proposition~\ref{prop:duality simvol bc})
  with the mapping theorem in bounded cohomology (Theorem~\ref{thm:mappingtheorem}).
\end{proof}

\subsection{Alternating cochains}  \label{subsec:alternating}
Recall that $V$ denotes $\R$ or $\Z$ and
let $\alpha \in C^n_b(G;V)$ be a bounded homogeneous cochain. We say that $\alpha$ is \emph{alternating}, if for every $g_0, \ldots, g_n \in G$ and every permutation $\sigma \in S_{n+1}$ we have that
$$
\alpha(g_{\sigma(0)}, \ldots, g_{\sigma(n)}) = \sign(\sigma) \cdot \alpha(g_0, \ldots, g_n).
$$
Every (bounded) cochain $\alpha \in C^n_b(G;\R)$ has an associated \emph{alternating} (bounded) cochain $\alt^n(\alpha)$ defined via
$$
\alt^n(\alpha)(g_0, \ldots, g_n) := \frac{1}{(n+1)!} \cdot \sum_{\sigma \in S_{n+1}} \sign(\sigma) \cdot \alpha(g_{\sigma(0)}, \ldots, g_{\sigma(n)}).
$$
Observe that  $\| \alt^n(\alpha) \|_\infty \leq \| \alpha \|_{\infty}$.
The subcomplex of alternating cochains is denoted by $C^n_{b, \alt}(G;V)$. It is well-known that one can compute real bounded cohomology using alternating cochains:
\begin{prop}[\protect{\cite[Proposition 4.26]{Frigerio}}] \label{prop:alternating cocycles}
  Let $G$ be a group. 
The complex $C^\bullet_{b, \alt}(G,\R)$ isometrically computes the bounded cohomology with real coefficients.
Moreover, for every $\alpha \in C^n_b(G,\R)$ the cocycle $\alt^n_b(\alpha)$ represents the same class as $\alpha$ in~$H^n_b(G;\R)$.
\end{prop}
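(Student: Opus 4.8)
The plan is to produce an explicit isometric chain map realising the statement, rather than to invoke abstract homological algebra. First I would observe that $\alt^n \colon C^n_b(G;\R) \to C^n_{b,\alt}(G;\R)$ is a well-defined linear map: the averaging formula clearly lands in the bounded cochains (since $\|\alt^n(\alpha)\|_\infty \le \|\alpha\|_\infty$ by the triangle inequality over the $(n+1)!$ permutations, each of operator norm $1$), and it lands in the alternating subcomplex because pre-composing with a further permutation $\tau$ permutes the summands and pulls out $\sign(\tau)$. I would then check that $\alt^\bullet$ is a chain map, i.e.\ $\delta^{n} \circ \alt^n = \alt^{n+1} \circ \delta^n$; this is the one genuine computation, carried out by expanding both sides and reindexing the symmetric-group sums (the face maps interact with $S_{n+1}$ and $S_{n+2}$ in the standard compatible way — this is the classical fact that the alternation operator is a chain map on the bar complex, and it restricts to bounded cochains without change). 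Note also that $\alt^n$ fixes cochains that are already alternating, so it is a retraction onto $C^\bullet_{b,\alt}(G;\R)$.

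Next I would show that the inclusion $\iota\colon C^\bullet_{b,\alt}(G;\R) \hookrightarrow C^\bullet_b(G;\R)$ is a homotopy equivalence with homotopy inverse $\alt^\bullet$. Since $\alt^\bullet \circ \iota = \id$ on the nose, it suffices to construct a bounded chain homotopy $h^\bullet$ between $\iota \circ \alt^\bullet$ and $\id_{C^\bullet_b(G;\R)}$. The standard construction uses a filtration of the symmetric group by the number of ``sorted prefixes'': one writes the difference $\id - \alt^n$ as a telescoping sum of partial symmetrisations over $S_{k}$ for $k = 1, \dots, n+1$ and exhibits each step as a coboundary via an explicit (bounded!) operator. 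Concretely, one can follow Frigerio's argument and define $h^n$ by averaging a ``last-coordinate-insertion'' operator over appropriate permutations; boundedness is automatic because every operator in sight is a signed average of coordinate-permutation and face-inclusion maps, each of which is an isometry in $\|\cdot\|_\infty$. Because $\iota$ and $\alt^\bullet$ are both norm-nonincreasing and $\alt^\bullet \circ \iota = \id$, the induced maps on cohomology are mutually inverse isometries: $\|[\iota\beta]\| \le \|[\beta]\|$ for $\beta$ alternating, and $\|[\beta]\| = \|[\alt^\bullet \iota \beta]\| \le \|[\iota\beta]\|$, so the inclusion is isometric on cohomology.

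Finally, the ``moreover'' clause is a corollary of the chain homotopy: for any cocycle $\alpha \in C^n_b(G;\R)$ we have $\alpha - \alt^n(\alpha) = \delta^{n-1} h^{n-1}(\alpha) + h^n(\delta^n \alpha) = \delta^{n-1}h^{n-1}(\alpha)$, so $\alt^n(\alpha)$ and $\alpha$ are cohomologous in $H^n_b(G;\R)$; and $\alt^n(\alpha)$ is indeed a cocycle since $\alt^\bullet$ is a chain map. The main obstacle is the explicit construction of the bounded contracting homotopy $h^\bullet$ and the verification that $\alt^\bullet$ is a chain map on the bounded bar complex; both are classical for the unbounded complex, and the only real point is to check that the standard formulas involve only finite signed averages of isometries of $(C^\bullet_b(G;\R),\|\cdot\|_\infty)$, so that all norm estimates go through and the isometry statement — not merely an isomorphism — is obtained. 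Everything else is bookkeeping with symmetric groups, and I would simply cite \cite[Proposition~4.26]{Frigerio} for the combinatorial details of $h^\bullet$.
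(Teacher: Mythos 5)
Your outline is correct and is essentially the same argument as the one behind the statement: the paper gives no proof of this proposition at all, but simply quotes \cite[Proposition~4.26]{Frigerio}, whose proof is exactly the scheme you describe ($\alt^\bullet$ is a norm-nonincreasing chain map retracting onto $C^\bullet_{b,\alt}(G;\R)$ and is boundedly chain homotopic to the identity, which yields both the isometric isomorphism and the ``moreover'' clause). Deferring the explicit bounded homotopy and the chain-map verification to the cited reference is therefore entirely in line with what the paper itself does.
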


\subsection{Euler class and the orientation cocycle} \label{subsec:euler class}

We describe the Euler class associated to a circle action.
For details we refer to the literature~\cite{BFH, Ghys_circle}.

For three points~$x_1,x_2,x_3 \in S^1$ on the circle let $\Or(x_1,x_2,x_3) \in \{ -1, 0, 1 \}$ be the (respective) circular order.
The group $\Homeo(S^1)$ of orientation preserving homeomorphisms on the circle preserves $\Or \in C_b^2(S^1;\Z) \subset C^2(S^1;\Z)$ and satisfies a (homogeneous) cocycle condition. Hence, $\Or$ induces a (bounded) cocycle on $\Homeo(S^1)$: For~$\xi \in S^1$, the map
\[ (g_1, g_2, g_3) \mapsto \Or(g_1.\xi, g_2.\xi, g_3.\xi)
\]
is a cocycle in~$C^2_b(\Homeo(S^1);\Z)$ and the bounded cohomology
class is independent of the choice of the point~$\xi$. 
It turns out that this class is divisible by~$2$, i.e., there
is a cocycle~$\Eurm \in C_b^2(\Homeo(S^1);\Z)$, called \emph{Euler cocycle},
with
$-2 \cdot [\Eurm] = [\Or]
$
in~$H_b^2(\Homeo(S^1);\Z)$. 

\begin{rmk} \label{notation:Euler}
Let $H := \Homeo(S^1)$. 
For Euler classes (and the orientation cocycle) we will use the following notation: Capital letters ($\Eurm$) denote cocycles and lower case letters ($\eurm$) denote classes. The classes $\eurm^{\Z} \in H^2(H; \Z)$, $\eurm^{\R} \in H^2(H; \R)$, $\eub \Z {} \in H^2_b(H;\Z)$ and $\eub \R {} \in H_b^2(H;\R)$
are the ones represented by $\Eurm$ in the corresponding cohomology groups.
If a group $G$ acts on the circle by $\rho \col G \to \Homeo(S^1)$, and $\alpha$ is a class or a cocycle defined on $\Homeo(S^1)$ then
$\rho^* \alpha$ will be the pullback of $\alpha$ via $\rho$.
If $\Gamma < \Homeo(S^1)$ is a subgroup of $\Homeo(S^1)$, then we will denote the restriction of a class or a cocycle $\alpha$ to $\Gamma$ by $^{\Gamma} \alpha$. Hence, for example, $\eub \R \Gamma \in H^2_b(\Gamma; \R)$ denotes the restriction of the real bounded Euler class to~$\Gamma$.
\end{rmk}

Let $G$ be a group with a circle action $\rho \col G \to \Homeo(S^1)$.
Then $\rho^*\eurm^{\Z} \in H^2(G;\Z)$ is called the \emph{Euler class associated to the action~$\rho$}.
The Euler class induces a central extension
\[ \xymatrix{%
  1 \ar[r]
  & \Z \ar[r]
  & \widetilde{G} \ar[r]
  & G \ar[r]
  &1}
\]
of~$G$ by $\Z$, the associated \emph{Euler extension} $\widetilde{G}$. This group has the following explicit description. It is the group defined on the set $\Z \times G$ with multiplication $(z,g) \cdot (z',g') = (z+z'+ \rho^*\Eurm(1,g,g \cdot g'),g \cdot g')$. Euler extensions are useful for constructing groups with controlled stable commutator length; see Section \ref{sec:thompson}.

We note that $\rho^*\Or^{\R}$ is extremal (in the sense of Proposition~\ref{prop:duality simvol bc}) for surface groups:

\begin{exmp} \label{exmp:Euler cocycle is extremal}
Let $g \in \N_{\geq 2}$ and let $\Sigma_g$ be an oriented closed connected surface of genus~$g \in \N_{\geq 2}$. 
Recall that $\| [\Sigma_g]_\R\|_1 = \| \Sigma_g \| = 4g-4$, where $[\Sigma_g]_\R \in H_2(\Gamma_g;\Z)$ denotes the fundamental class and $\Gamma_g = \pi_1(\Sigma_g)$.
Then $\Gamma_g$ induces an action on its boundary. By identifying $\partial \Gamma_g \cong S^1$, we obtain a circle action~$\rho \col \Gamma_g \to \Homeo(S^1)$ and 
$$
\big\langle [\rho^*\Or]^\R, [\Sigma_g]_\R \bigr\rangle = 
- 2 \cdot \bigl\langle \rho^*\eub \R {}, [\Sigma_g]_\R \bigr\rangle = - 2 \cdot \chi(\Sigma_g)
= 4\cdot g-4 = \| \Sigma_g \|,
$$
i.e., $\rho^*\Or \in C^2_b(\Gamma_g; \R)$ is an extremal cocycle for the fundamental class~$[\Sigma_g]_\R$. Indeed, it is the renormalised volume cocycle of ideal simplices in~$\mathbb{H}^2$; see \cite{bucherprodsurf}.
\end{exmp}

\section{Stable commutator length}\label{sec:scl}

 In recent years the topic of stable commutator length ($\scl$) has seen a vast developemet thanks largely to Calegari and his coauthors~\cite{Calegari}. 
In this section, we will only give a brief overview of~$\scl$. 
The definition and basic properties will be given in Section~\ref{subsec:defn basic prop of scl}.
A useful tool to compute $\scl$ is Bavard's duality theorem, described in Section~\ref{subsec:bavard's duality}. We discuss examples and general properties of~$\scl$ in Section~\ref{subsec:examples scl}.

\subsection{Definition and basic properties} \label{subsec:defn basic prop of scl}
For a group $G$ let $G'$ be its commutator subgroup. The \emph{commutator length}~$\cl_G g $ of an element $g \in G'$ is defined as
$$
\cl_G g := \min \bigl\{ n \in \N \bigm| \exi{x_1,\dots,x_n,y_1,\dots, y_n \in G }g = [x_1, y_1] \cdots [x_n, y_n]
\bigr\},
$$
where $[x,y] := xyx^{-1}y^{-1}$.
The \emph{stable commutator length of $g$ in $G$} is defined as
$$
\scl_G g  := \lim_{n \to \infty} \frac{\cl_G(g^n)}{n}.
$$
If $g_1, \ldots, g_m \in G$ are such that $g_1 \cdots g_m \in G'$, we will call $g_1 + \cdots + g_m$ a \emph{chain} and define the corresponding \emph{(stable) commutator length on chains} by
\begin{align*}
  \cl_G(g_1 + \cdots + g_m)
  & := \min_{t_1, \ldots, t_m \in G} \cl_G(t_1 g_1 t_1^{-1} \cdots t_m g_m t_m^{-1}) \mbox{, and}
\\
\scl_G(g_1 + \cdots + g_m)
  & := \lim_{n \to \infty} \frac{\cl_G(g_1^n + \cdots + g_m^n)}{n}.
\end{align*}

If $\varphi \col G \to H$ is a group homomorphism, then $\scl_G g  \geq \scl_H \varphi(g)$ for all~$g \in G'$; the analogous result holds for chains. In particular, $\scl$ is invariant under automorphisms, whence under conjugation. Thus, $\scl$ on single chains agrees with the usual definition of stable commutator length.

Stable commutator length has the following geometric interpretation: If $X$ is a connected topological space and $\gamma \col S^1 \to X$ is a loop, then the stable commutator length of the associated element $[\gamma] \in \pi_1(X)$ measures the least complexity of the surface needed to bound $\gamma$ 
(we will not use this interpretation in this paper).
In Section~\ref{subsec:sclfill}, we will describe yet another interpretation of $\scl$, namely as a
topological stable-filling invariant.

\subsection{Bavard's duality theorem and bounded cohomology} \label{subsec:bavard's duality}

Let $G$ be a group. A map $\phi \col G \to \R$ is called a \emph{quasimorphism} if there is a constant $C > 0$ such that
$$
\fa{g,h \in G} \bigl|\phi(g) + \phi(h) - \phi(g\cdot h)\bigr| \leq C.
$$
The smallest such $C$ is called the \emph{defect} of $\phi$ and is denoted by $D(\phi)$.
A quasimorphism~$\phi$ is \emph{homogeneous} if in addition we have that $\phi(g^n) = n \cdot \phi(g)$ for all $g \in G$, $n \in \Z$.
Every quasimorphism $\phi \col G \to \R$ is in bounded distance to a unique homogeneous quasimorphism $\bar{\phi} \col G \to \R$, defined by setting
$$
\bar{\phi}(g) := \lim_{n \to \infty} \frac{\phi(g^n)}{n}
$$
for every $g \in G$.
Moreover, it is well-known that $D(\bar{\phi}) \leq 2 \cdot D(\phi)$~\cite[Lemma 2.58]{Calegari}.
Analogously to the duality principle (Proposition \ref{prop:duality simvol bc}) we may compute $\scl$ using homogeneous quasimorphisms:
\begin{thm}[Bavard's duality theorem~\cite{Bavard}] \label{thm:bavard's duality}
Let $G$ be a group and let $g \in G'$.
Then
$$
\scl_G g = \sup_{\phi} \frac{|\phi(g)|}{2 \cdot D(\phi)},
$$
where the supremum is taken over all homogeneous quasimorphisms $\phi \col G \to \R$. Moreover, this supremum is achieved by an \emph{extremal} quasimorphism.
\end{thm}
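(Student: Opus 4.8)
The plan is to prove the two inequalities in Bavard's formula separately: the inequality $\scl_G g \ge \sup_\phi |\phi(g)|/\bigl(2\,D(\phi)\bigr)$ by an elementary defect estimate, and the reverse inequality, together with the existence of an extremal quasimorphism, by a Hahn--Banach argument that realises $\scl$ as a dual norm on a space of chains.

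First I would treat the lower bound. Fix a homogeneous quasimorphism $\phi \col G \to \R$ with defect $D(\phi)$. Two observations suffice. (i)~$\phi$ is conjugation invariant: apply homogeneity to $(aga^{-1})^n = a g^n a^{-1}$ and use the estimate $|\phi(xy) - \phi(yx)| \le 2 D(\phi)$. (ii)~$|\phi([x,y])| \le D(\phi)$: combine (i) with $\phi(y^{-1}) = -\phi(y)$. Telescoping the defect inequality over a product of $n$ commutators then gives $|\phi(g)| \le (2n-1)\cdot D(\phi)$ whenever $g$ is a product of $n$ commutators, hence $\cl_G g \ge |\phi(g)|/\bigl(2D(\phi)\bigr) + 1/2$. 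Applying this to $g^m$, dividing by $m$, and letting $m \to \infty$ (using $\phi(g^m) = m\,\phi(g)$) yields $\scl_G g \ge |\phi(g)|/\bigl(2D(\phi)\bigr)$, and taking the supremum over $\phi$ gives one of the two inequalities.

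For the reverse inequality I would pass to the functional-analytic picture of $\scl$. Let $C_1(G) = \R[G]$, let $B_1(G) \subset C_1(G)$ be the kernel of the canonical map $C_1(G) \to H_1(G;\R) = G^{\mathrm{ab}}\otimes\R$, and let $B_1^H(G)$ be the quotient of $B_1(G)$ by the subspace generated by the relations $g^n - n\cdot g$ and $hgh^{-1} - g$ for $g,h \in G$, $n \in \Z$. The first genuine step is to verify that stable commutator length of chains (in the sense recalled above) descends to a well-defined, homogeneous, subadditive function on $B_1^H(G)$, that is, a seminorm $\|\cdot\|_{\scl}$ with $\|[g]\|_{\scl} = \scl_G g$. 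Next I would show that a homogeneous quasimorphism $\phi$ induces, via $c = \sum_i a_i g_i \mapsto \sum_i a_i\,\phi(g_i)$, a bounded linear functional on $\bigl(B_1^H(G), \|\cdot\|_{\scl}\bigr)$ — indeed the estimate of the previous paragraph is exactly such a bound on single elements — and conversely that every bounded functional arises this way, by extending it arbitrarily over $\R[G]$ and homogenising; matching the normalisations identifies the dual of $\bigl(B_1^H(G), \|\cdot\|_{\scl}\bigr)$ with the space of homogeneous quasimorphisms modulo $H^1(G;\R)$, the defect norm entering up to the universal factor~$2$. Finally, applying the Hahn--Banach theorem to the element $[g] \in B_1^H(G)$ produces a homogeneous quasimorphism $\phi$ with $|\phi(g)| = 2\,D(\phi)\cdot\scl_G g$; together with the first inequality this is an equality, and since Hahn--Banach (equivalently, weak-$\ast$ compactness of the dual unit ball) yields an \emph{exact} supporting functional rather than an approximating sequence, this $\phi$ is extremal.

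The main obstacle is concentrated in the middle paragraph: verifying that $\scl$ genuinely descends to a seminorm on $B_1^H(G)$ — which needs subadditivity and homogeneity of $\scl$ on chains together with invariance under the two relations — and, more delicately, tracking the constants so that the identification of the dual returns quasimorphisms with exactly the defect needed to recover the factor $2$ in Bavard's formula (morally this $2$ combines one factor from homogenising a quasimorphism, $D(\bar\phi) \le 2D(\phi)$, with a universal estimate on the $\|\cdot\|_{\scl}$-seminorm of chains of the form $[g] + [h] - [gh]$). I would also record the equivalent cohomological repackaging: the space of homogeneous quasimorphisms modulo $H^1(G;\R)$ embeds into $H^2_b(G;\R)$ as the kernel of the comparison map, and the duality between $H^2_b$ and the $l^1$-semi-norm (Proposition~\ref{prop:duality simvol bc}) then plays the role of the Hahn--Banach step.
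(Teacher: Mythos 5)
The paper does not actually prove this statement: Theorem~\ref{thm:bavard's duality} is imported from Bavard (see also Calegari's book) and used as a black box, so there is no internal argument to compare yours against. Your sketch is the classical proof. The easy direction is correct as written: conjugation invariance, $|\phi([x,y])|\le D(\phi)$, the telescoped bound $|\phi(g)|\le(2n-1)\cdot D(\phi)$, and stabilisation over powers give $\scl_G g\ge|\phi(g)|/(2\cdot D(\phi))$ (modulo the usual convention for $D(\phi)=0$, where $\phi$ is a homomorphism and vanishes on~$G'$). The hard direction via $B_1^H(G)$ and Hahn--Banach is exactly Calegari's formulation, and the step you isolate as the main work (that $\scl$ descends to a homogeneous, subadditive seminorm on~$B_1^H(G)$) is indeed a genuine lemma that has to be proved.

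The one place where the proposal does not close as written is the constant bookkeeping in the dual identification, i.e.\ precisely the point you flag. If you extend the Hahn--Banach functional $\lambda$ (normalised by $|\lambda(c)|\le\scl(c)$ and $\lambda([g])=\scl_G g$) linearly over $\R[G]$ and then genuinely homogenise at the cost $D(\bar\phi)\le 2\cdot D(\phi)$, then the bound $\scl_G(x+y-xy)\le\tfrac12$ (pair of pants) gives $D(\phi)\le\tfrac12$, hence $D(\bar\phi)\le 1$, and you only obtain $\sup_\phi|\phi(g)|/(2\cdot D(\phi))\ge\tfrac12\cdot\scl_G g$ --- off by exactly the factor~$2$ your parenthetical tries to account for. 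The repair is that no homogenisation loss occurs at all: the relations $x^n-n\cdot x$ and $hxh^{-1}-x$ lie in $B_1(G)$ for \emph{all} $x,h\in G$ (they die in $H_1(G;\R)$), so any linear extension of $\lambda$ to $\R[G]$ automatically satisfies $\phi(x^n)=n\cdot\phi(x)$ and $\phi(hxh^{-1})=\phi(x)$; it is already a homogeneous quasimorphism, and its defect is bounded by $|\phi(x)+\phi(y)-\phi(xy)|=|\lambda([x+y-xy])|\le\scl_G(x+y-xy)\le\tfrac12$. Thus $\phi(g)/(2\cdot D(\phi))\ge\scl_G g$, which together with the easy direction forces $D(\phi)=\tfrac12$ and exhibits $\phi$ as the extremal quasimorphism. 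With this correction (and the $B_1^H$-seminorm lemma carried out), your argument is complete; your closing remark that one may instead run the duality through the kernel of $H^2_b(G;\R)\to H^2(G;\R)$ is Bavard's original route and an equally valid alternative.
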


\begin{rmk}\label{rem:QHb}
(Homogeneous) quasimorphisms are intimately related to second bounded real cohomology. Using the inhomogeneous resolution, it can be seen that the kernel of $c^2_G \col H^2_b(G;\R) \to H^2(G;\R)$ corresponds to the space of homogeneous quasimorphisms modulo~$\Hom(G,\R)$.
  It follows then from Bavard's dualtiy theorem that the comparison map $c^2_G \col H^2_b(G;\R) \to H^2(G;\R)$ is injective if and only if $\scl_G$ vanishes on $G$.
  
  It is well known that $H^2_b(G ; \R)$ vanishes if $G$ is abelian \cite{vbc}. Thus every homogeneous quasimorphism on an abelian group is an honest homomorphism. 
\end{rmk}

\subsection{Examples} \label{subsec:examples scl}

We collect some known results for stable commutator length.

In Sections \ref{sec:fillings} and \ref{sec:manufac sim vol} we will promote $\scl$ in a finitely presented group $G$ to the simplicial volume of manifolds in higher dimension.
For this we need to assert that $H_2(G;\R)$ vanishes. Thus, we will have a particular emphasis on this condition in the examples.

\subsubsection{Vanishing}\label{exa:sclvanish}
An element $g \in G'$ may satisfy that $\scl_G g =0$ for ``trivial'' reasons, such as if $g$ is torsion or if $g$ is conjugate to its inverse.
There are many classes of groups where -- besides these trivial reasons -- stable commutator length vanishes on the whole group. Recall that this is equivalent to the injectivity of the comparison map $c^2_G \col H^2_b(G;\R) \to H^2(G;\R)$.
Examples include:
\begin{itemize}
\item amenable groups: This follows from the vanishing of $H^2_b(G;\R)$ for every amenable group~$G$ by a result of Trauber~\cite{vbc},
\item irreducible lattices in semisimple Lie groups of rank at least~$2$~\cite{BurgerMonod}, and
\item subgroups of the group~$\mathrm{PL}^+(I)$ of piecewise linear transformations of the interval~\cite{calegari_pl_inverval}.
\end{itemize}

\subsubsection{Non-abelian free groups}\label{exa:sclfree}
In contrast, Duncan and Howie \cite{DuncanHowie} showed that every element~$g \in F'\setminus\{e\}$ in the commutator subgroup of a non-abelian free group $F$ satisfies $\scl_F g \geq 1/2$. 
In a sequence of papers \cite{Calegari_rational, Calegari_sss} Calegari showed that stable commutator length is \emph{rational} in free groups and that every rational number mod $1$ is realised as the stable commutator length of some element in the free group.
Moreover, he gave an explicit, polynomial time algorithm to compute stable commutator length in free groups. This revealed a surprising distribution of those values.
We note that these results generalise to free products of cyclic groups~\cite{Walker_cyclic} and that all these groups $G$ satisfy $H_2(G;\R)\cong 0$.

\subsubsection{Gaps and groups of non-positive curvature}
A group $G$ has a \emph{gap in $\scl$} if there is a constant $C>0$ such that for every group element~$g$, we have~$\scl_G g \geq C$ unless $\scl_G g =0$ for ``trivial'' reasons such as torsion or if $g$ is conjugate to its inverse.

In the previous example, we already have seen that non-abelian free groups have a gap in stable commutator length of $1/2$. This result has recently been generalised to right-angled Artin groups \cite{Heuer}. 
Many classes of non-positively curved groups have a gap in $\scl$, though this gap may not be uniform in the whole class of groups.
Prominent examples include hyperbolic groups \cite{CalegariFujiwara}, mapping class groups \cite{BBF}, 
free products of torsion-free groups \cite{Chen} and
amalgamated free products \cite{CFL,Heuer, ChenHeuer}. 

\subsubsection{Hyperelliptic mapping class groups}\label{exa:hMCG}
  Let $g\in \N$, let $\iota \in \MCG g$ be the
  mapping class of a hyperelliptic involution of the orientable closed
  connected surface~$\Sigma_g$ of genus~$g$, and let
  \[ \hMCG g := \bigl\{ x \in \MCG g \bigm| \iota \cdot x \cdot \iota^{-1} = x \bigr\}
     \subset \MCG g
  \]
  be the \emph{hyperelliptic mapping class group of~$\Sigma_g$}.
  The group~$\hMCG g$ is finitely presented~\cite{birmanhilden} and
  satisfies~$H_2(\hMCG g;\R) \cong
  0$~\cite[Corollary~3.3]{kawazumi}\cite[Theorem~1.1]{bensoncohen}\cite{boedigheimercohenpeim}. 
  We now let~$g\geq 2$. Let $t \in \hMCG g$ be a Dehn twist about a $\iota$-invariant
  non-separating curve on~$\Sigma_g$.
  Then we have
  \[ 0 < \frac 1 {4\cdot (2 \cdot g +1)}
       \leq \scl_{\hMCG g} t
       \leq \frac 1 {2 \cdot (2 \cdot g + 3 + 1/g)};
  \]
  the first estimate is a computation by Monden~\cite[Theorem~1.2]{monden}
  (similar estimates also appear in the work of Endo and
  Kotschick~\cite[proof of Corollary~8]{endokotschick}), the second
  estimate is due to Calegari, Monden, Sato~\cite[Theorem~1.7]{calegarimondensato}.

\section{The universal central extension of Thompson's group~$T$}\label{sec:thompson} 

Thompson's group $T$ was introduced in 1965 by Richard Thompson as the first example of an infinite but finitely presented simple group.
It is the subgroup of~$\PL^+(S^1)$ which maps dyadic rationals to dyadic rationals, with dyadic breakpoints and where each derivative -- if defined -- is an integer power of~$2$ (here, we identify~$\R / \Z \cong S^1$)~\cite{CannonFloyd}.

Stable commutator length on Thompsons's group $T$ vanishes~\cite[Chapter 5]{Calegari}, but interesting values for stable commutator length arise on the central extensions of $T$ and its generalisations associated to the Euler class~\cite{zhuang} (for the definition of the Euler extension, see Section \ref{subsec:euler class}).

In this section, we extend these results about stable commutator length on these extensions to the \emph{universal} central extension~$E$ of~$T$. For a perfect group~$G$ the universal central  extension~$\widetilde{G}$ is the unique group that is a Schur covering group of~$G$. It satisfies that~$H_1(\widetilde{G};\Z) \cong 0$ and $H_2(\widetilde{G};\Z) \cong 0$ and there is an explicit construction of~$\tilde{G}$ in terms of~$G$ and~$H^2(\tilde{G};\Z)$ (which we recall during the proof of Proposition~\ref{prop:scl on central extension of thompsons group}).

\begin{prop} \label{prop:scl on central extension of thompsons group}
The universal central extension $E$ of Thompson's  group $T$ is finitely presented and satisfies that $H_1(E;\Z) \cong 0 \cong H_2(E;\Z)$.
For every non-negative rational number~$q \in \Q_{\geq 0}$, there is an element~$e_q \in E$ with~$\scl_E e_q =q$.
\end{prop}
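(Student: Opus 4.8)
The plan is to establish the three assertions of Proposition~\ref{prop:scl on central extension of thompsons group} in turn: finite presentability of~$E$, the vanishing of $H_1(E;\Z)$ and $H_2(E;\Z)$, and the realisability of every non-negative rational as $\scl_E$ of some element. For the first two points, recall that $T$ is perfect and finitely presented, and that $H_2(T;\Z)$ is a finitely generated abelian group. I would use the standard construction of the universal central extension of a perfect group: $E$ fits into a central extension $1 \to H_2(T;\Z) \to E \to T \to 1$, and for a perfect group the universal central extension is itself perfect and superperfect, i.e.\ $H_1(E;\Z) \cong 0 \cong H_2(E;\Z)$; this is classical (the Schur covering group). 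Finite presentability of~$E$ follows because $T$ is finitely presented and $H_2(T;\Z)$ is finitely generated: a central extension of a finitely presented group by a finitely generated abelian group is finitely presented (add the generators for the central kernel and the relations recording commutativity and the $2$-cocycle values on the finitely many defining relators). To make this concrete I would need to know $H_2(T;\Z)$ explicitly; it is known that $H^*(T;\Z)$ is computed (work of Ghys--Sergiescu and others) and $H_2(T;\Z) \cong \Z$, generated (rationally) by the Euler class, so in fact $E$ is a central extension of $T$ by $\Z$ and one checks it is the Euler extension $\widetilde{T}$ in the sense of Section~\ref{subsec:euler class}.

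The heart of the proof is the third assertion, and here I would lean on the circle action. Since $T \subset \PL^+(S^1) \subset \Homeo(S^1)$, we have the Euler cocycle $\Eurm \in C^2_b(\Homeo(S^1);\Z)$ and its restriction gives a class in $H^2_b(T;\R)$; the corresponding Euler extension $\widetilde{T}$ is our group $E$ (up to identification of the kernel). The key input is Zhuang's computation~\cite{zhuang} of stable commutator length on central extensions of $T$ (and its generalisations) by the Euler class: for suitable elements one gets explicit rational values built from rotation numbers. Concretely, for a rotation by a dyadic rational $p/2^k \in \Q/\Z$ — or rather for the preimage in $E$ of an element of $T$ acting with a prescribed rotation number — the stable commutator length is computed via the bounded Euler class and Bavard duality (Theorem~\ref{thm:bavard's duality}), the relevant extremal quasimorphism being (a variant of) the Poincaré rotation number. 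I would show that as the parameter ranges over the dyadic rationals, and after taking suitable powers and products (chains), the resulting values of $\scl_E$ sweep out a dense-in-fact-all set of non-negative rationals; the monoid structure of $\scl$ on chains and the fact that $\scl$ scales linearly under taking powers let one hit every rational once one has a single element of positive rational $\scl$ together with an element of $\scl$ equal to a generating denominator. This is exactly the strategy of Zhuang and of Calegari for Thompson-like groups; the passage from $T$ to its \emph{universal} central extension $E$ is what needs the extra argument, and it should follow because the quasimorphisms on $E$ detecting $\scl$ factor through the rotation-number quasimorphism on the Euler extension.

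The main obstacle I anticipate is two-fold. First, identifying $H_2(T;\Z)$ precisely and verifying that $E$ is genuinely the Euler extension (not merely a central extension by a possibly larger group) — one must be careful that $H_2(T;\Z)$ has no unexpected torsion that would spoil finite presentability arguments or change the kernel; citing the explicit cohomology computations for $T$ handles this. Second, and more substantively, transporting the $\scl$ computations known for the rank-one Euler extension to the universal extension $E$: a priori $E$ could have more homogeneous quasimorphisms than the Euler extension, which would only \emph{decrease} $\scl$ relative to the target, so one must produce matching lower bounds. The clean way around this is to observe that $E \to \widetilde{T}$ (the map to the Euler extension) is surjective with central — hence amenable — kernel, so it induces an isometric isomorphism on $H^2_b(-;\R)$ by the mapping theorem (Theorem~\ref{thm:mappingtheorem}), and therefore $\scl_E e = \scl_{\widetilde{T}}(\text{image of }e)$ for elements $e$ in the relevant subgroup, reducing the computation to the already-known case. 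Assembling these, together with closure of the achievable values under the chain operations, yields that every $q \in \Q_{\geq 0}$ is realised.
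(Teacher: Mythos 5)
There is a genuine gap, and it sits exactly at the point where the real work of the proposition lies. Your identification of $E$ with the Euler extension $\widetilde{T}$ is false: by Ghys--Sergiescu (Theorem~\ref{thm:cohomology of thompson group}) we have $H^2(T;\Z) \cong \Z \oplus \Z$, generated by the Euler class \emph{and} the class $\alpha$ related to the discrete Godbillon--Vey cocycle, so the universal central extension of the perfect group $T$ is a central extension by $\Z^2$, not by $\Z$. In fact $H_2(\widetilde{T};\Z) \cong \Z \neq 0$, so $\widetilde{T}$ is not superperfect and cannot be the universal central extension --- this is precisely why the paper cannot simply quote Zhuang/Calegari (Corollary~\ref{cor: every rational value is scl in thomps}) and be done; the entire content of the proposition beyond finite presentability and superperfectness is the transfer of the $\scl$ computation from $\widetilde{T}$ to the strictly larger extension $E$.

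Your proposed ``clean way around'' for that transfer is a non sequitur. The surjection $\kappa\colon E \to \widetilde{T}$ does have central, hence amenable, kernel, and the mapping theorem does give an isometric isomorphism $H^2_b(\widetilde{T};\R) \cong H^2_b(E;\R)$; but stable commutator length is \emph{not} an invariant of bounded cohomology alone --- it is governed, via Bavard duality, by the kernel of the comparison map $H^2_b \to H^2$, and the ordinary cohomology changes under the quotient. The quotient $\widetilde{T} \to T$ itself is the counterexample: it is surjective with amenable kernel $\Z$, so $H^2_b(\widetilde{T};\R)\cong H^2_b(T;\R)$, yet $\scl$ vanishes identically on $T$ while it takes every non-negative rational value on $\widetilde{T}$. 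What one actually has to show is that no homogeneous quasimorphism on $E$ is nontrivial on the extra central $\Z$ corresponding to $\alpha$ (otherwise the extremal quasimorphism for an element of $E$ need not induce one on $\widetilde{T}$ with the same defect, and the inequality $\scl_E \leq \scl_{\widetilde{T}}\circ\kappa$ fails a priori; note also that extra quasimorphisms would \emph{increase}, not decrease, $\scl_E$, so this is the direction you must control). Ruling this out is exactly where the paper's Proposition~\ref{prop:alpha not bounded} enters: if some quasimorphism on $E$ had a nonzero component on the $\alpha$-direction of the centre, then $\alpha$ would be represented by a bounded cocycle, and this is excluded by evaluating the Godbillon--Vey cocycle on the amenable subgroup $\Z^2 \cong \langle a,b\rangle_T \subset T$. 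Your proposal contains neither this unboundedness statement nor any substitute for it, so the crucial inequality reducing $\scl_E$ to the rotation-number computation on $\widetilde{T}$ is unproved. (The first part of your argument --- finite presentability and $H_1(E;\Z)\cong 0\cong H_2(E;\Z)$ from perfectness of $T$, finite generation of $H_2(T;\Z)$, and the standard properties of Schur covers --- is fine and matches the paper.)
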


In Section~\ref{subsec:euler of thompson}, we recall results of Zhuang~\cite{zhuang}, which describe stable commutator length on the central extension~$\widetilde T$ of~$T$ associated to the Euler class.
Using information on the (bounded) $2$-cohomology of Thompson's group~$T$ (Section~\ref{subsec:thompson group t}), we reduce stable commutator length on~$E$ to stable commutator length on~$\widetilde{T}$  and show Proposition~\ref{prop:scl on central extension of thompsons group} (Section \ref{subsec:proof of thompsons group extesnion}).

\subsection{The Euler central extension of Thompson's group~$T$} \label{subsec:euler of thompson}

We recall the connection between stable commutator length and rotation number. This connection has been established by Barge and Ghys~\cite{bargeghys} and has been used by Zhuang~\cite{zhuang} to construct finitely presented groups with transcendental stable commutator length.

\begin{thm}[\cite{bargeghys, zhuang}] \label{thm:zhuang qm on extension}
Let $\widetilde{T}$ be the central extension of Thompson's group~$T$ associated to the Euler class~$^T\eurm^\Z \in H^2(T;\Z)$. Then there is a homogeneous quasimorphism~$\rot \col \widetilde{T} \to \R$ of defect $1$, called \emph{rotation number}, that generates the space of homogeneous quasimorphisms. Hence, for all~$\widetilde t \in \widetilde T$, 
$$
\scl_{\widetilde{T}} \widetilde{t} = \frac{\bigl|\rot(\widetilde{t})\bigr|}{2}.
$$
\end{thm}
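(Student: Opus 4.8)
The plan is to realise $\widetilde T$ concretely as a subgroup of the universal cover $\widetilde{\Homeo}(S^1)$, take $\rot$ to be the classical translation number on that group, and then pin down the space of homogeneous quasimorphisms on $\widetilde T$ by pushing quasimorphisms down to $T$, where stable commutator length is known to vanish. Once that space is shown to be spanned by $\rot$, the scl formula is immediate from Bavard's duality (Theorem~\ref{thm:bavard's duality}).

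First I would fix the model. Thompson's group $T$ acts faithfully on $S^1 = \R/\Z$ by a homomorphism $\rho\colon T \to \Homeo(S^1)$, and by the construction of the Euler extension in Section~\ref{subsec:euler class} the group $\widetilde T$ associated to $^T\eurm^\Z = \rho^*\eurm^\Z$ is the pullback along $\rho$ of the central extension $1 \to \Z \to \widetilde{\Homeo}(S^1) \to \Homeo(S^1) \to 1$; the explicit cocycle description via $\rho^*\Eurm$ from Section~\ref{subsec:euler class} is exactly this pullback. Concretely, $\widetilde T$ is the group of homeomorphisms of $\R$ commuting with $x \mapsto x+1$ whose induced homeomorphism of $S^1$ lies in $\rho(T)$, with the central $\Z$ generated by the unit translation $z\colon x \mapsto x+1$. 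On $\widetilde{\Homeo}(S^1)$ the translation number $\rot(\tilde f) := \lim_{n\to\infty}\tilde f^n(0)/n$ is a homogeneous quasimorphism of defect $1$; I take $\rot$ on $\widetilde T$ to be its restriction, so that $D(\rot)\le 1$ and $\rot(z)=1$. That $D(\rot)=1$ already holds on $\widetilde T$ follows by exhibiting $\tilde a,\tilde b \in \widetilde T$ projecting to elements of $T$ with fixed points on $S^1$ — so $\rot(\tilde a)=\rot(\tilde b)=0$ for the lifts fixing points of $\R$ — while $ab$ has translation number $1$, forcing $|\delta\rot(\tilde a,\tilde b)|=1$; the dynamics of $T$ on $S^1$ are rich enough to supply such elements (this is the defect computation already used by Zhuang~\cite{zhuang}).

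The heart of the argument is that $\rot$ spans the space $Q(\widetilde T)$ of homogeneous quasimorphisms. Given $\phi \in Q(\widetilde T)$, its restriction to the central subgroup $Z = \langle z\rangle$ is a homomorphism, say $\phi(z)=c$, and $\psi := \phi - c\cdot\rot$ is homogeneous with $\psi|_Z = 0$. Since $Z$ is central and homogeneous quasimorphisms are conjugation invariant, comparing $\psi((zg)^n)=\psi(z^ng^n)$ with $\psi(z^n)+\psi(g^n)$ gives $\psi(zg)=\psi(g)$ for all $g$, so $\psi$ factors through a homogeneous quasimorphism $\bar\psi$ on $T = \widetilde T/Z$ of defect at most $D(\psi)$. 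Now $T$ is simple, hence perfect, so $\Hom(T,\R)=0$, and $\scl_T$ vanishes identically~\cite[Ch.~5]{Calegari}; by Remark~\ref{rem:QHb} the latter means precisely that homogeneous quasimorphisms on $T$ modulo $\Hom(T,\R)$ form the trivial space, so $\bar\psi = 0$, whence $\psi = 0$ and $\phi = c\cdot\rot$. In particular $Q(\widetilde T)=\R\cdot\rot$ and (taking $\phi$ a homomorphism, where $D(\phi)=0$) also $\Hom(\widetilde T,\R)=0$, so $\widetilde T$ is perfect. Finally, for $\tilde t \in \widetilde T = \widetilde T'$, Bavard's duality (Theorem~\ref{thm:bavard's duality}) gives $\scl_{\widetilde T}\tilde t = \sup_\phi |\phi(\tilde t)|/(2D(\phi)) = |\rot(\tilde t)|/(2D(\rot)) = |\rot(\tilde t)|/2$.

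I expect the main obstacle to be the uniqueness step — establishing that $\rot$ is, up to scalars, the only homogeneous quasimorphism on $\widetilde T$. This rests on the two external inputs about $T$ itself (simplicity and the vanishing of $\scl_T$) combined with the elementary but slightly delicate descent of a homogeneous quasimorphism through a central quotient; by contrast, the identification of the model and the computation $D(\rot)=1$ are standard facts about the dynamics of circle homeomorphisms.
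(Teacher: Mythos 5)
The paper itself gives no proof of this statement---it is imported from Barge--Ghys and Zhuang---so the comparison is with those arguments, and your reconstruction follows the same standard route: realise $\widetilde T$ as the pullback of $\widetilde{\Homeo}(S^1)\to\Homeo(S^1)$, take $\rot$ to be the translation number, show that $\rot$ spans the homogeneous quasimorphisms by pushing any $\psi$ with $\psi(z)=0$ down to $T$, where $\scl$ vanishes and $\Hom(T,\R)=0$, and finish with Bavard duality (Theorem~\ref{thm:bavard's duality}, Remark~\ref{rem:QHb}). The descent computation $\psi(zg)=\psi(g)$ is correct, and the defect claim is fine to quote from \cite{zhuang}; note only that you need $D(\rot|_{\widetilde T})=1$ \emph{exactly} (not merely $\le 1$, otherwise Bavard duality would give a larger value of $\scl$), which requires two lifts with rotation number $0$ whose product displaces some point of $\R$ by exactly $1$---obtainable from one-fixed-point elements of $T$ pushing far enough plus an intermediate value argument.

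There is one genuine misstep: from $\Hom(\widetilde T,\R)=0$ you conclude that $\widetilde T$ is perfect. This implication is false in general---vanishing of real homomorphisms only forces $H_1(\widetilde T;\Z)$ to be torsion---and the point is not cosmetic here, because the theorem asserts the formula for \emph{all} $\widetilde t\in\widetilde T$, while both the paper's definition of $\scl$ and Bavard's duality are stated for elements of the commutator subgroup; as written, your argument only yields the formula on $\widetilde T'$. The gap is easy to fill. By the five-term exact sequence for $1\to\Z\to\widetilde T\to T\to 1$ and perfectness of $T$, the group $H_1(\widetilde T;\Z)$ is cyclic, generated by the image of the central generator $z$. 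For every $n$, Thompson's group $T$ contains an element of order $n$ with rotation number $1/n$; its lift $\widetilde t_n$ with $\rot(\widetilde t_n)=1/n$ satisfies $\widetilde t_n^{\,n}=z$, so the class of $z$ is divisible by every $n$ inside the cyclic group it generates, forcing $H_1(\widetilde T;\Z)\cong 0$. Alternatively, simply quote Ghys--Sergiescu~\cite{GS}, who compute $H_1(\widetilde T;\Z)\cong 0$ (and $H_2(\widetilde T;\Z)\cong\Z$, which the paper uses elsewhere). With perfectness supplied, the rest of your argument is correct.
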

The rotation number is well studied and has a geometric meaning~\cite{BFH}. Hence, one obtains the full spectrum of stable commutator length for~$\widetilde{T}$.
\begin{corr}[\cite{Calegari}] \label{cor: every rational value is scl in thomps}
Let $\widetilde{T}$ be the central extension of Thompson's group~$T$ by the Euler class. Then the image of stable commutator length on~$\widetilde T$ is~$\Q_{\geq 0}$.
\end{corr}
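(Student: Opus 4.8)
The plan is to read the statement off from Theorem~\ref{thm:zhuang qm on extension} combined with two classical facts about the action of Thompson's group~$T$ on the circle. By Theorem~\ref{thm:zhuang qm on extension} the image of stable commutator length on~$\widetilde T$ is precisely $\bigl\{\,|\rot(\widetilde t)|/2 \bigm| \widetilde t \in \widetilde T\,\bigr\} = \tfrac12 \cdot \bigl|\rot(\widetilde T)\bigr|$, where $\rot(\widetilde T) \subseteq \R$ denotes the set of values taken by the rotation number. So it suffices to prove that $\rot(\widetilde T) = \Q$, and I would establish the two inclusions separately.

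For the inclusion $\rot(\widetilde T) \subseteq \Q$: the homogeneous quasimorphism $\rot \colon \widetilde T \to \R$ is the translation number of a lift to~$\R$, so for $\widetilde t \in \widetilde T$ the real number $\rot(\widetilde t)$ reduces modulo~$\Z$ to the Poincaré rotation number of the image of~$\widetilde t$ in~$T \subset \Homeo(S^1)$. Every element of~$T$, being piecewise linear with dyadic breakpoints and all slopes in~$2^{\Z}$, has a periodic point and hence rational rotation number; this is a standard feature of Thompson's group~$T$ \cite[Chapter~5]{Calegari}. Therefore $\rot(\widetilde t)$ differs from a rational number by an integer, and $\rot(\widetilde T) \subseteq \Q$.

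For the reverse inclusion I would use the torsion of~$T$. Fix $n \in \N_{\geq 1}$ and choose an element $t_n \in T$ of order exactly~$n$; Thompson's group~$T$ contains cyclic subgroups of every finite order \cite{CannonFloyd}. Since an orientation-preserving finite-order homeomorphism of~$S^1$ with a fixed point is the identity, the rotation number of~$t_n$ in~$\R/\Z$ equals~$a/n$ with $\gcd(a,n) = 1$. Pick any lift $\widetilde t_n \in \widetilde T$. Then $\widetilde t_n^{\,n}$ maps to $t_n^{\,n} = e$, hence lies in the central subgroup $\ker(\widetilde T \to T) = \langle z \rangle$, where $z$ is the generator with $\rot(z) = 1$; write $\widetilde t_n^{\,n} = z^{m}$. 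By homogeneity of~$\rot$ we get $\rot(\widetilde t_n) = m/n$, and reducing modulo~$\Z$ shows $m \equiv a \pmod n$, so $\gcd(m,n) = 1$. Using homogeneity together with additivity of~$\rot$ on commuting elements, for all $k, j \in \Z$ we have $\rot(\widetilde t_n^{\,k} z^{j}) = (km + jn)/n$; as $(k,j)$ ranges over~$\Z^2$ the numerator $km + jn$ ranges over~$m\Z + n\Z = \Z$, so $\tfrac1n\Z \subseteq \rot(\widetilde T)$. Letting $n$ vary, $\rot(\widetilde T) \supseteq \bigcup_{n \geq 1} \tfrac1n\Z = \Q$, and with the previous paragraph $\rot(\widetilde T) = \Q$. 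Hence the image of $\scl_{\widetilde T}$ is $\tfrac12 \cdot |\Q| = \Q_{\geq 0}$.

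The bookkeeping with the central extension and with the homogeneity of~$\rot$ is routine (one only needs that $\rot$ restricted to the central~$\Z$ is the obvious isomorphism onto~$\Z$ and that homogeneous quasimorphisms are additive on commuting elements, both standard). The real content of the argument therefore lies not in a single step but in the two external inputs about Thompson's group~$T$ — that every element has rational rotation number, and that $T$ realises every element of~$\Q/\Z$ as a rotation number (equivalently, contains torsion of every order) — which I would quote from the literature rather than reprove here.
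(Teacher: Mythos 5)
Your proof is correct and takes essentially the same route as the paper: both reduce via Theorem~\ref{thm:zhuang qm on extension} to showing that the rotation number on~$\widetilde T$ takes exactly the rational values, quote rationality from the literature (the paper cites Ghys--Sergiescu), and realise $\tfrac1n\Z$ using elements of~$T$ with a cyclically permuted $n$-point orbit. The only cosmetic difference is bookkeeping: the paper chooses an ``appropriate lift'' with $\rot=1/n$ and takes powers, whereas you take an arbitrary lift of a torsion element of order~$n$ and correct with central elements via $\gcd(m,n)=1$.
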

\begin{proof}
Ghys and Sergiescu \cite{GS} showed that the rotation number on~$\widetilde T$ is rational. Moreover, it is well known that every rational number is realised as such a rotation number. 
To see this observe that for every integer~$n \in \N$ there is an element~$t_n \in T$ with a periodic orbit of size~$n$ that cyclically permutes the elements of this orbit. An appropriate lift~$\widetilde{t}_n$ of this element to~$\widetilde{T}$ will satisfy~$\rot \widetilde{t}_n = 1/n$. 
By taking powers of such elements we may realise every rational as a rotation number in~$\widetilde{T}$.
\end{proof}

However, Ghys and Sergiescu~\cite{GS} showed that $H_2( \widetilde{T};\Z) \cong \Z$. Thus, we cannot apply Theorem~\ref{theorem:doubling} to the group~$\widetilde{T}$.

\subsection{(Bounded) $2$-cohomology of Thompson's group~$T$} \label{subsec:thompson group t}

The cohomology of Thompson's group $T$ was computed by Ghys and Sergiescu~\cite{GS}.  
Ghys and Sergiescu showed that the $2$-cohomology $H^2(T;\Z)$ is generated by the Euler class~$^{T}\eurm^{\Z}$ (see Section~\ref{subsec:euler class}) and another class $\alpha$, which has the following combinatorial description.

For a function $\phi \col S^1 \to \R$ that admits limits on both sides at every point in~$S^1$, let $\phi(x_+)$ be the right and let $\phi(x_-)$ be the left limit at~$x\in S^1$. In this case, set $\Delta \phi(x) := \phi(x_+) - \phi(x_-)$. 
Moreover, for an element $u \in T$ we denote by $u'_r(x)$ the right derivative of $u$ at $x \in S^1$, i.e., $u'_r(x) = u'(x_+)$.

\begin{defn}[discrete Godbillon-Vey cocycle~\protect{\cite[Theorem E]{GS}}] \label{defn:gv class}
The \emph{discrete Godbillon-Vey cocycle} $\gvrm \col T \times T \to \Z$ is defined as
$$
\gvrm(u,v) := \sum_{x \in S^1} 
\begin{vmatrix}
\log_2(v)'_r & \log_2(u \circ v)'_r \\
\Delta \log_2 (v)'_r & \Delta \log_2(u \circ v)'_r
\end{vmatrix} ( x )
$$
where the (finite) sum runs over all $x \in S^1$ that are breakpoints of $v$, $u$ or~$u \circ v$.
\end{defn}
The map~$\gvrm$ is an \emph{inhomogeneous cocycle}. In this section only we will use inhomogeneous cocycles as they are better to work with in the context of central extensions; the precise definition can for instance be found in Frigerio's book~\cite[Chapter 1.7]{Frigerio}.

\begin{thm}[\protect{\cite[Corollary C, Theorem E]{GS}}] \label{thm:cohomology of thompson group}
Thompson's group $T$ satisfies that $H^2(T; \Z) \cong \Z \oplus \Z$.
Free generators are the Euler class $^{T}\eurm^{\Z}$ and a class~$\alpha$.
This class satisfies that $2 \cdot \alpha = [\gvrm] \in H^2(G;\Z)$, where $\gvrm$ is the discrete Godbillon-Vey cocycle (Definition~\ref{defn:gv class}). 
\end{thm}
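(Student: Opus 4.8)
This is the cohomology computation of Ghys and Sergiescu~\cite{GS}, so the plan is to reduce it to their determination of $H_*(T;\Z)$, to locate the two named generators inside their basis, and to verify the normalisation $2\alpha = [\gvrm]$.

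First I would pass from cohomology to homology. Thompson's group $T$ is simple (Section~\ref{sec:thompson}), hence perfect, so $H_1(T;\Z) = 0$; by the universal-coefficients theorem this forces $H^1(T;\Z) = 0$ and, since the relevant $\mathrm{Ext}$-term then vanishes, $H^2(T;\Z) \cong \Hom\bigl(H_2(T;\Z),\Z\bigr)$. So it suffices to show that $H_2(T;\Z)$ is free abelian of rank two; this is exactly \cite[Theorem~E]{GS}. Ghys and Sergiescu compute $H_*(T;\Z)$ — and, with \cite[Corollary~C]{GS}, the ring structure of $H^*(T;\Q)$ — by letting $T$ act on a suitable contractible complex (equivalently, by studying the classifying space $BT$) and running the associated descent spectral sequence; in particular $H_2(T;\Z) \cong \Z^2$ is torsion-free. (One could alternatively try to extract $H_2(T;\Z)$ directly from a finite presentation of $T$ via Hopf's formula, but this is markedly more laborious, so I would simply quote \cite{GS}.)

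It then remains to pin down a $\Z$-basis. The Euler class ${}^T\eurm^\Z \in H^2(T;\Z)$ of the tautological action $T \hookrightarrow \Homeo(S^1)$ is one of the free generators in the Ghys–Sergiescu basis; it is in particular non-trivial, since its central extension $\widetilde{T}$ (Theorem~\ref{thm:zhuang qm on extension}) is non-split — for instance because $\scl_{\widetilde T}$ is non-zero whereas $\scl$ vanishes identically on $\Z \times T$ — and its primitivity is part of their computation. For the complementary generator I would first check that the discrete Godbillon–Vey cocycle $\gvrm$ of Definition~\ref{defn:gv class} is an integral (inhomogeneous) $2$-cocycle, and then that its class is twice a primitive integral class; setting $\alpha := \tfrac12[\gvrm]$ makes $\{{}^T\eurm^\Z,\alpha\}$ the Ghys–Sergiescu basis. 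The evenness of $[\gvrm]$ is the discrete incarnation of the Bott–Virasoro cocycle being twice a generator of the relevant second cohomology — entirely parallel to the orientation cocycle satisfying $-2\cdot[\Eurm] = [\Or]$ of Section~\ref{subsec:euler class} — and can be certified by evaluating $[\gvrm]$ on a generator of the rank-one summand of $H_2(T;\Z)$ annihilated by the Euler class and checking that the value is $\pm 2$.

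The single genuinely hard ingredient is the Ghys–Sergiescu computation $H_2(T;\Z)\cong\Z^2$ together with their identification of the basis $\{{}^T\eurm^\Z,\alpha\}$; everything else is either formal (perfectness plus universal coefficients) or a cocycle-level divisibility check. Since this theorem enters our later arguments only through the two consequences that ${}^T\eurm^\Z$ lies in a $\Z$-basis of $H^2(T;\Z)$ and that $2\alpha=[\gvrm]$, I would state it with attribution rather than reproduce the Ghys–Sergiescu proof.
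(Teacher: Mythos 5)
The paper does not prove this statement at all---it is quoted with attribution from Ghys--Sergiescu (Corollary~C and Theorem~E of their paper)---and your proposal likewise defers the substantive computation (the rank-two, torsion-free $H_2(T;\Z)$ and the identification of the basis with $2\alpha=[\gvrm]$) to the same source, so the two approaches coincide. Your auxiliary reductions (simplicity of $T$ plus universal coefficients to pass to $H_2$, and non-splitness of $\widetilde T$ detected by $\scl$ to see that the Euler class is non-trivial) are correct but become redundant once Ghys--Sergiescu is invoked.
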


For what follows we will also need to compute the \emph{bounded} cohomology of~$T$ in degree~$2$.
\begin{prop} \label{prop:alpha not bounded}
  The class $\alpha \in H^2(T;\Z)$ from Theorem~\ref{thm:cohomology of thompson group}
  cannot be represented by a bounded cocycle, i.e., $\alpha$
  is \emph{not} in the image of the comparison map $H^2_b(T;\Z) \to H^2(T;\Z)$.
  In particular, we have that 
  $$
  H^2_b(T;\R) \cong \R,
  $$
  generated by the Euler class.
\end{prop}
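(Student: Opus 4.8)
The plan is to split the statement into two parts: first show that $\alpha$ is not in the image of the comparison map $c^2_T\colon H^2_b(T;\Z)\to H^2(T;\Z)$, and then deduce the computation of $H^2_b(T;\R)$. For the first part I would argue by contradiction. Suppose $\alpha$ lifts to a bounded class $\tilde\alpha\in H^2_b(T;\Z)$. Since $2\cdot\alpha=[\gvrm]$ by Theorem~\ref{thm:cohomology of thompson group}, the discrete Godbillon-Vey class would then also be bounded. So it suffices to show that $[\gvrm]$ is \emph{not} in the image of $c^2_T$; equivalently, that $\gvrm$ (viewed as an inhomogeneous cocycle, or its homogenised version) is at infinite $l^\infty$-distance from every coboundary. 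The natural way to detect this is to pair $\gvrm$ against a sequence of homology classes whose $l^1$-seminorm stays bounded while the Godbillon-Vey pairing grows without bound: if $[\gvrm]$ were bounded with representative of norm $\le C$, then $\bigl|\langle[\gvrm],z\rangle\bigr|\le C\cdot\|z\|_1$ for all $z\in H_2(T;\Z)$ (using the duality pairing of Section~\ref{subsec:duality}). Since $T$ is a group of piecewise-linear circle homeomorphisms, $\scl$ on chains gives control on $l^1$-seminorms of $2$-classes coming from surface groups (via Proposition~\ref{prop:l1 of 2-classes}); I would feed in a family of maps $\Sigma_g\to BT$ (e.g.\ arising from elements $t_n\in T$ with large periodic orbits, as in the proof of Corollary~\ref{cor: every rational value is scl in thomps}) realizing homology classes of controlled norm. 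Evaluating $\gvrm$ on these — using that the Godbillon-Vey cocycle is built from derivatives/breakpoints and hence grows like the geometric complexity of the chosen PL maps — should give the required unboundedness, contradicting the existence of a bounded lift. (Alternatively, one can invoke that $\scl_T\equiv 0$ means $c^2_T\colon H^2_b(T;\R)\to H^2(T;\R)$ is injective by Remark~\ref{rem:QHb}, and then play off the known non-triviality of $[\gvrm]$ in $H^2(T;\R)$ against the fact that its putative bounded lift would have to be the unique preimage, deriving a contradiction from an explicit norm estimate.)

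For the second part, I would combine the first part with known facts about $H^2(T;\R)$ and about $H^2_b(T;\R)$. By Theorem~\ref{thm:cohomology of thompson group} and the universal coefficient theorem, $H^2(T;\R)\cong\R^2$, spanned by the real Euler class $^T\eurm^\R$ and (the image of) $\alpha$. The Euler class certainly lifts to $H^2_b(T;\R)$, since it is pulled back from the bounded Euler class $\eub{\R}{}$ on $\Homeo(S^1)$ via the defining action $T\hookrightarrow\Homeo(S^1)$ (Section~\ref{subsec:euler class}). So the image of $c^2_T\colon H^2_b(T;\R)\to H^2(T;\R)$ contains $^T\eurm^\R$; by the first part it does not contain $\alpha$, hence it is exactly the line $\R\cdot{}^T\eurm^\R$. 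The kernel of $c^2_T$ is the space of homogeneous quasimorphisms on $T$ modulo $\Hom(T,\R)$ (Remark~\ref{rem:QHb}); since $\scl_T\equiv 0$ (Calegari, \cite[Chapter 5]{Calegari}), this kernel is trivial, so $c^2_T$ is injective. An injective map with image a line has domain a line, giving $H^2_b(T;\R)\cong\R$, generated by the bounded Euler class.

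I expect the main obstacle to be the first part — showing $\alpha$ (equivalently $[\gvrm]$) admits no bounded representative. The issue is to produce a clean, quantitative unboundedness witness: one needs either an explicit sequence of $2$-cycles on which the Godbillon-Vey pairing is unbounded relative to $l^1$-norm, or a structural argument that no homogeneous quasimorphism / no bounded cocycle can "see" $\alpha$. The cleanest route is probably the dichotomy via $\scl$: since $\scl_T\equiv 0$, the only bounded classes in degree $2$ are those coming from genuinely bounded phenomena (the Euler class, which is literally bounded because circular order takes values in $\{-1,0,1\}$), and the Godbillon-Vey class, being an unbounded $1$-cocycle-flavoured object built from logarithms of slopes, simply cannot be. Making the last sentence rigorous — rather than hand-wavy — is the real work, and I would do it by the pairing-against-surface-classes computation sketched above, possibly importing a known estimate on Godbillon-Vey numbers from the cited literature (\cite{GS}, \cite{bargeghys}).
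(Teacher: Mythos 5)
Your second half is essentially the paper's argument and is sound: injectivity of $c^2_T\colon H^2_b(T;\R)\to H^2(T;\R)$ from $\scl_T\equiv 0$ (Remark~\ref{rem:QHb}), boundedness of the Euler class via pullback from $\Homeo(S^1)$, and the linear-algebra observation that a subspace of $H^2(T;\R)\cong\R^2$ containing~${}^{T}\eurm^{\R}$ but not $\alpha$ must be the Euler line. (Minor caveat: for that step you need that the \emph{real} class $\alpha$ is not in the image of the \emph{real} comparison map; your first part is phrased over $\Z$, which is a priori weaker, though any completed version of your pairing argument would give the real statement too. The paper handles this by ruling out boundedness of every $\lambda\cdot{}^{T}\eurm^{\R}+\mu\cdot\alpha$ with $\mu\neq0$ at once.)

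The genuine gap is the first part, and you flag it yourself: you never produce the unboundedness witness, and the route you sketch cannot succeed in the form stated. Since the pairing $\langle[\gvrm],\cdot\rangle$ and the semi-norm $\|\cdot\|_1$ are both homogeneous and $H_2(T;\R)\cong\R^2$ is finite-dimensional, an inequality $|\langle[\gvrm],z\rangle|\le C\cdot\|z\|_1$ fails for all $C$ if and only if there is a \emph{single} class $z$ with $\|z\|_1=0$ and $\langle[\gvrm],z\rangle\neq0$; scaling or taking powers of a fixed class does not change the ratio, so "a family of surface classes of controlled norm with growing pairing" is not available. The whole content of the proposition is therefore to exhibit such a norm-zero class (equivalently, a subgroup on which $\gvrm$ is cohomologically nontrivial but whose $2$-classes have vanishing semi-norm), and your candidates --- classes built from elements $t_n$ with large periodic orbits --- are tailored to the rotation number/Euler class and are not shown (and cannot be expected) to pair nontrivially with $\gvrm$; the parenthetical alternative via injectivity of $c^2_T$ yields no contradiction without exactly such an estimate. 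The paper supplies the missing witness concretely: it restricts $\gvrm$ to an explicit subgroup $\Z^2\cong\langle a,b\rangle_T\subset T$ and shows by a direct breakpoint/slope computation (Claim~\ref{claim:gv cocycle is gen of z2}) that the restriction represents twice a generator of $H^2(\Z^2;\Z)$; since $\Z^2$ is amenable, no nontrivial class on $\Z^2$ is bounded (equivalently, the pushed-forward torus class has zero $l^1$-semi-norm), so $[\gvrm]$, hence $\alpha$, admits no bounded representative over $\Z$ or $\R$. Without that explicit computation, or an equivalent concrete witness, the heart of the proposition remains unproved in your proposal.
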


\begin{proof}
  Note that it is enough to show the unboundedness statement for $[\gvrm]$ as $2 \cdot \alpha = [\gvrm]$
  (Theorem~\ref{thm:cohomology of thompson group}).
We will show the proposition by evaluating $\gvrm$ on the subgroup~$\Z^2 \cong \langle a,b \rangle_T \subset T$, where $a$ and $b$ are the elements depicted in Figure \ref{fig:Generators of ab subgroup}.
\begin{figure} 
\begin{center}
\includegraphics[scale=0.8]{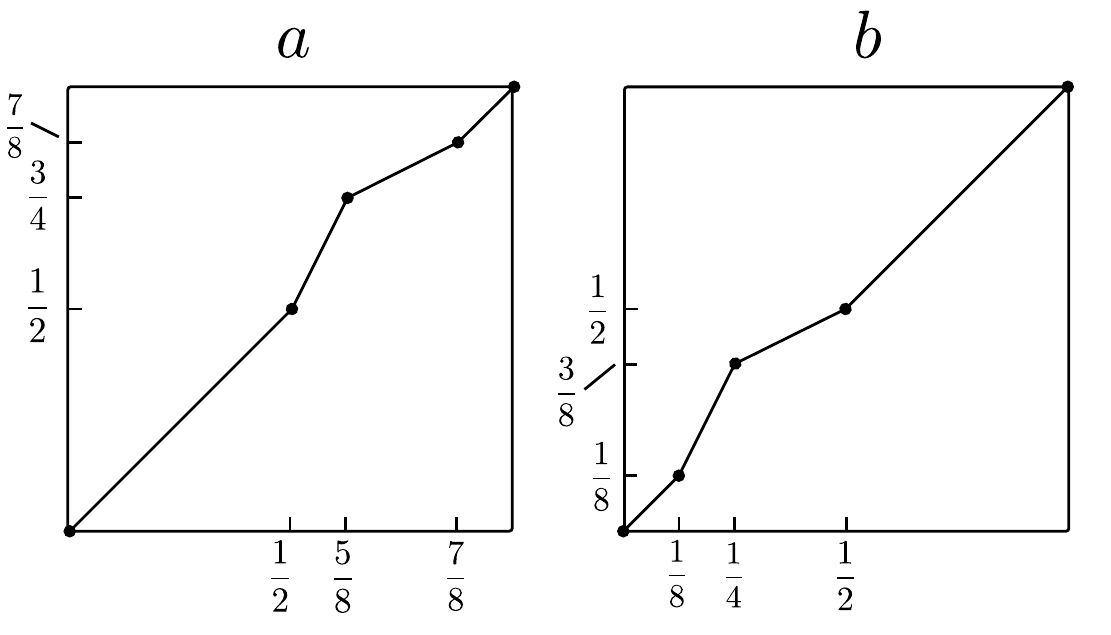}
\caption{The generators $a$ (left) and $b$ (right).} \label{fig:Generators of ab subgroup}
\end{center}
\end{figure}

\begin{claim} \label{claim:gv cocycle is gen of z2}
  The cocycle $\gvrm$ restricts on $\langle a, b \rangle_T$ to a cocycle representing
  a non-trivial element of $H^2(\Z^2;\Z) \cong \Z$.
\end{claim}

\begin{proof}[Proof of Claim \ref{claim:gv cocycle is gen of z2}]
This claim is implicitly stated in the work of Ghys and Sergiescu \cite[proof of Lemma~4.6]{GS}. For the convenience of the reader we provide an explicit proof here.

Observe that $(i,i') \mapsto \gvrm(a^i,a^{i'})$  is a (inhomogeneous) $2$-cocycle on $\Z$. Since $H^2(\Z;\Z)\cong0$, we see that there is a function $f \col \Z \to \Z$ such that $\gvrm(a^i,a^{i'})=f(i)+f(i')-f(i+i')$ for all~$i,i' \in \Z$.
Similarly, we see that there is a function $g \col \Z \to \Z$ such that $\gvrm(b^{j},b^{j'})=g(j)+g(j')-g(j+j')$.
Observe that we have 
$$
\begin{vmatrix}
\log_2(a^i)'_r & \log_2(a^{i+i'})'_r \\
\Delta \log_2 (a^i)'_r & \Delta \log_2(a^{i+i'})'_r
\end{vmatrix} ( x ) = \begin{vmatrix}
0 & 0 \\
0 & 0 
\end{vmatrix} = 0
$$
for any $x \in [0, 1/2)$ and that
$$
\begin{vmatrix}
\log_2(a^i)'_r & \log_2(a^{i+i'})'_r \\
\Delta \log_2 (a^i)'_r & \Delta \log_2(a^{i+i'})'_r
\end{vmatrix} ( 1/2 ) = \begin{vmatrix}
i & i+i' \\
i & i+i'
\end{vmatrix} = 0.
$$
This way we see that
\begin{align*}
f(i)+f(i')-f(i+i') = 
\gvrm(a^i,a^{i'})
 & =
\sum_{x \in (1/2,1]} 
\begin{vmatrix}
\log_2(a^i)'_r & \log_2(a^{i+i'})'_r \\
\Delta \log_2 (a^i)'_r & \Delta \log_2(a^{i+i'})'_r
\end{vmatrix} ( x ) \\
 &= 
\sum_{x \in (1/2,1]} 
\begin{vmatrix}
\log_2(a^i b^j)'_r & \log_2(a^{i+i'} b^{j+j'})'_r \\
\Delta \log_2 (a^i b^{j})'_r & \Delta \log_2(a^{i+i'} b^{j+j'})'_r
\end{vmatrix} ( x )
\end{align*}
for all $i,j,i',j' \in \Z$.
Similarly, we see that
\begin{align*}
  g(j)+g(j')-g(j+j') = \gvrm(b^j,b^{j'})
  &= \sum_{x \in (0,1/2)} 
\begin{vmatrix}
\log_2(b^j)'_r & \log_2(b^{j+j'})'_r \\
\Delta \log_2 (b^j)'_r & \Delta \log_2(b^{j+j'})'_r
\end{vmatrix} ( x )
\\
&= \sum_{x \in (0,1/2)} 
\begin{vmatrix}
\log_2(a^i b^j)'_r & \log_2(a^{i+i'} b^{j+j'})'_r \\
\Delta \log_2 (a^i b^{j})'_r & \Delta \log_2(a^{i+i'} b^{j+j'})'_r
\end{vmatrix} ( x ).
\end{align*}
We moreover calculate
\begin{align*}
\begin{vmatrix}
\log_2(a^i b^j)'_r & \log_2(a^{i+i'} b^{j+j'})'_r \\
\Delta \log_2 (a^i b^{j})'_r & \Delta \log_2(a^{i+i'} b^{j+j'})'_r
\end{vmatrix} (1/2) &=
\begin{vmatrix}
i & (i+i') \\
(i-j) & (i+i'-j-j')
\end{vmatrix}  \\
&=-
\begin{vmatrix}
i & i' \\
j & j'
\end{vmatrix}.
\end{align*}
Putting the above calculations together, we can now compute the restriction  of~$\gvrm$ to $\langle a, b \rangle_T$. 
For all $i,j,i',j' \in \Z$ we see that
\begin{align*}
\gvrm(a^i b^j, a^{i'} b^{j'}) 
&= 
\sum_{x \in S^1} 
\begin{vmatrix}
\log_2(a^i b^j)'_r & \log_2(a^{i+i'} b^{j+j'})'_r \\
\Delta \log_2 (a^i b^{j})'_r & \Delta \log_2(a^{i+i'} b^{j+j'})'_r
\end{vmatrix} ( x ) 
\\
&=
-\begin{vmatrix}
i & i' \\
j & j'
\end{vmatrix} +\delta^1 f_0\bigl((i,j),(i',j')\bigr) + 
\delta^1 g_0\bigl((i,j),(i',j')\bigr),
\end{align*}
where $f_0(i,j):=f(i)$ and $g_0(i,j):=g(j)$.
Hence, evaluating~$\gvrm$ on a fundamental cycle shows that
$\gvrm$ restricted to $\langle a, b \rangle_T$ represents twice a generator
of~$H^2(\Z^2;\Z) \cong \Z$. This proves Claim~\ref{claim:gv cocycle is gen of z2}.
\end{proof}

It is well-known that non-trivial elements of $H^2(\Z^2;\Z)$ cannot be represented by a bounded cocycle ($\Z^2$ is amenable). Hence,  also $[\gvrm]$ cannot be represented by a bounded cocycle, which proves the first part of Proposition \ref{prop:alpha not bounded}. 

Stable commutator length vanishes on~$T$ (Example~\ref{exa:sclvanish}) and so the comparison map $c^2_T \col H^2_b(T;\R) \to H^2(T;\R)$ is injective (Remark~\ref{rem:QHb}). 
We now assume for a contradiction that $\lambda \cdot \eu \R T + \mu \cdot \alpha \in H^2(T;\R)$ lies in the image of the comparison map~$c^2_T \col H^2_b(T;\R) \to H^2(T;\R)$ and $\mu \not = 0$.
As $\eu \R T$ is bounded and $\langle a, b \rangle_T$ is amenable, $\eu \R T$ restricts to a trivial class on $\langle a, b \rangle_T$.
Thus $\lambda \cdot \eu \R T + \mu \cdot \alpha$ restricts to $\mu \cdot \alpha$ on $\langle a, b \rangle_T$ and generates $H^2(\Z^2;\R)$ (by Claim~\ref{claim:gv cocycle is gen of z2}). This is a contradiction as these classes are not bounded. Hence, the only classes in the image of $c^2_T$ are multiples of~$\eu \R T$.
We conclude that 
$$
H^2_b(T;\R) \cong \R,
$$
generated by the Euler class. This completes the proof of Proposition~\ref{prop:alpha not bounded}.
\end{proof}

\subsection{Proof of Proposition \ref{prop:scl on central extension of thompsons group}}
\label{subsec:proof of thompsons group extesnion}

We will now prove Proposition \ref{prop:scl on central extension of thompsons group} by explicitly computing the quasimorphisms on $E$ and then invoking Bavard's duality theorem. We note that there is an alternative proof using diagrams by applying Gromov's mapping theorem. A variation of this may be found in the forthcoming paper \cite[Section 3.3]{trans_simvol}.

\begin{proof}[Proof of Proposition \ref{prop:scl on central extension of thompsons group}]
As $E$ arises as a group extension of  finitely presented groups it is itself finitely presented. The group~$T$ is simple~\cite{CannonFloyd} and thus in particular perfect.
The universal central extension~$E$ of a perfect group~$T$ always satisfies that $H_1(E;\Z)
\cong 0 \cong H_2(E;\Z)$~\cite[Chapter~6.9]{weibel}.

It remains to show that every rational number~$q \in \Q$ is the stable commutator length of some element~$e_q \in E$.
For this note that $E$ may be explicitly described as the group on the set~$\Z^2 \times T$ with multiplication
$$
\Bigl(\Big( \begin{smallmatrix}
i \\ j
\end{smallmatrix} \Big)
, t\Bigr)
\times
\Bigl(
\Big( \begin{smallmatrix}
i' \\ j'
\end{smallmatrix} \Big)
,t'\Bigr)
\mapsto  \Bigl(
\left( \begin{smallmatrix}
i+i'+^{T}\Eurm(t,t') \\ j+j'+ \Arm(t,t')
\end{smallmatrix} \right)
, t \cdot t'\Bigr),
$$
where $^{T}\Eurm$ (resp.~$\Arm$) is an inhomogeneous cocycle representing $^{T} \eurm^{\Z} \in H^2(T;\Z)$ (resp.~$\alpha \in H^2(T;\Z)$).
Similarly $\widetilde{T}$ may be described as the set $\Z \times T$ with group multiplication $(i,t) \times (i',t') \mapsto (i+i'+ ^{T}\Eurm(t,t'), t \cdot t')$.

\begin{claim} \label{claim:scl preserved}
For every $(i_0,t_0) \in \widetilde{T}$, we have that $\scl_{\widetilde{T}}(i_0,t_0) = \scl_E\bigl(
\left( \begin{smallmatrix}
i_0 \\ 0
\end{smallmatrix} \right)
, t_0\bigr)$.
\end{claim}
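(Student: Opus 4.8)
The plan is to exploit Bavard duality (Theorem \ref{thm:bavard's duality}) on both sides together with the structural fact, coming from Proposition \ref{prop:alpha not bounded}, that $H^2_b(T;\R)$ is one-dimensional and generated by the Euler class. Write $E$ as the central extension
\[
1 \longrightarrow \Z^2 \longrightarrow E \longrightarrow T \longrightarrow 1
\]
with the explicit cocycle description given above, and note that the composite map $\iota\col \widetilde{T} \to E$, $(i_0,t_0) \mapsto \bigl(\left(\begin{smallmatrix} i_0 \\ 0 \end{smallmatrix}\right), t_0\bigr)$ is an injective group homomorphism (this is immediate from comparing the two multiplication formulas, since the $\widetilde{T}$-multiplication only involves the Euler cocycle $^T\Eurm$ and this sits in the first coordinate of the $E$-multiplication). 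Functoriality of $\scl$ under homomorphisms then gives the inequality $\scl_{\widetilde{T}}(i_0,t_0) \geq \scl_E\,\iota(i_0,t_0)$ for free, so the whole content of the claim is the reverse inequality.

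For the reverse inequality I would argue via quasimorphisms: by Bavard duality it suffices, given a homogeneous quasimorphism $\phi\col E \to \R$, to produce a homogeneous quasimorphism $\psi\col \widetilde{T}\to\R$ with $\psi\circ\iota^{-1}$-relation $\psi(i_0,t_0) = \phi(\iota(i_0,t_0))$ and $D(\psi) \leq D(\phi)$. The natural candidate is $\psi := \phi\circ\iota$, which is automatically a homogeneous quasimorphism with $D(\phi\circ\iota)\leq D(\phi)$ and clearly satisfies the value condition; this would already show $\scl_{\widetilde T}(i_0,t_0)\leq\scl_E\,\iota(i_0,t_0)$ — wait, that is the wrong direction. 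So instead I must go the other way: starting from a homogeneous quasimorphism $\phi$ on $\widetilde{T}$ realising $\scl_{\widetilde T}(i_0,t_0)$, I need to \emph{extend} it to a homogeneous quasimorphism on $E$ without increasing the defect. Here is where the cohomological input enters. The space of homogeneous quasimorphisms on $\widetilde{T}$ modulo $\Hom(\widetilde T,\R)$ is the kernel of the comparison map $c^2_{\widetilde T}$; using $H_1(\widetilde T;\Z)\cong 0$ (so $\Hom(\widetilde T,\R)=0$) and Theorem \ref{thm:zhuang qm on extension}, every homogeneous quasimorphism on $\widetilde T$ is a scalar multiple of the rotation number $\rot$. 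Thus it is enough to extend $\rot$ itself to a homogeneous quasimorphism $\widehat{\rot}$ on $E$ of defect $\leq 1$, and to check $\widehat{\rot}\bigl(\left(\begin{smallmatrix} i_0 \\ 0 \end{smallmatrix}\right), t_0\bigr) = \rot(i_0,t_0)$.

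The key step — which I expect to be the main obstacle — is constructing this extension. The pulled-back bounded Euler class $^T\eub\Z{}$ lives in $H^2_b(T;\Z)$ and its image in $H^2(T;\Z)$ (or in $H^2(T;\R)$) restricts to zero along the section-level quasimorphism argument precisely because $\widetilde T$ is the Euler extension; more precisely, $\rot$ arises from the fact that the class $^T\eub\R{}$ pulls back to $0$ in $H^2(\widetilde T;\R)$ but not in $H^2_b(\widetilde T;\R)$, producing a bounded-but-not-exact primitive, i.e. a quasimorphism. To extend to $E$ one observes that $E\to T$ factors the projection $\widetilde T\to T$ is \emph{not} quite how the groups relate; rather both $\widetilde T$ and $E$ surject onto $T$ and $\iota$ is compatible with these projections, so the pullback of $^T\eub\R{}$ to $E$ is bounded, and its image in $H^2(E;\R)$ vanishes because $H_2(E;\R)\cong 0$ (Proposition \ref{prop:scl on central extension of thompsons group}, already proved), hence it admits a bounded primitive on $E$ — a homogeneous quasimorphism $\widehat{\rot}$ with defect equal to $\|{}^T\eub\R{}\|_\infty \cdot(\text{const})$; one must check this defect is exactly $1$, matching $D(\rot)=1$, which follows since restricting $\widehat{\rot}$ along $\iota$ recovers a primitive of the same bounded class on $\widetilde T$, forcing $\widehat{\rot}\circ\iota$ to differ from $\rot$ by a homomorphism, and $\Hom(\widetilde T,\R)=0$ gives $\widehat{\rot}\circ\iota=\rot$ exactly. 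Combining: $\scl_E\,\iota(i_0,t_0) \geq |\widehat{\rot}(\iota(i_0,t_0))|/(2D(\widehat{\rot})) = |\rot(i_0,t_0)|/2 = \scl_{\widetilde T}(i_0,t_0)$ by Theorem \ref{thm:zhuang qm on extension}, and together with the functoriality inequality this yields equality. The delicate points to get right are (i) that the Euler class pulled back to $E$ really is bounded and cohomologically trivial, and (ii) the precise defect bookkeeping, which I would handle by the rigidity coming from $\Hom(\widetilde T,\R)=0$ rather than by a direct cocycle computation.
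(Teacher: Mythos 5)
Your proof has a genuine gap, and it sits exactly where the real work of the claim is. The map $\iota\col \widetilde{T}\to E$, $(i,t)\mapsto\bigl(\left(\begin{smallmatrix} i\\0\end{smallmatrix}\right),t\bigr)$ is \emph{not} a group homomorphism: in $E$ one has $\bigl(\left(\begin{smallmatrix} i\\0\end{smallmatrix}\right),t\bigr)\cdot\bigl(\left(\begin{smallmatrix} i'\\0\end{smallmatrix}\right),t'\bigr)=\bigl(\left(\begin{smallmatrix} i+i'+{}^{T}\Eurm(t,t')\\ \Arm(t,t')\end{smallmatrix}\right),t\cdot t'\bigr)$, and the second coordinate $\Arm(t,t')$ cannot vanish identically, since $\Arm$ represents the non-zero class $\alpha\in H^2(T;\Z)$ (an identically vanishing cocycle represents $0$); in fact no homomorphic lift $\widetilde T\to E$ over $T$ exists at all, because $\alpha$ pulls back non-trivially to $\widetilde T$ (Ghys--Sergiescu: $H_2(\widetilde T;\Z)\cong\Z$). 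Consequently the inequality you declare to be ``free'', namely $\scl_{\widetilde T}(i_0,t_0)\geq\scl_E\bigl(\left(\begin{smallmatrix} i_0\\0\end{smallmatrix}\right),t_0\bigr)$, is unproven --- and this is precisely the hard half of the claim. It is also where Proposition~\ref{prop:alpha not bounded} must actually enter: one has to rule out that an extremal quasimorphism on $E$ gains anything from the second central coordinate. The paper does this by showing that the coefficient $\lambda_{\Arm}$ of an extremal $\phi$ on the centre vanishes (otherwise $\alpha$ would be representable by a bounded cocycle), after which $(i,t)\mapsto\phi\bigl(\left(\begin{smallmatrix} i\\0\end{smallmatrix}\right),t\bigr)$ is a homogeneous quasimorphism on $\widetilde T$ with the same defect; alternatively one could argue that $H^2_b(E;\R)\cong H^2_b(T;\R)\cong\R$ via the mapping theorem for the amenable central kernel, so that every homogeneous quasimorphism on $E$ is a multiple of $\rot\circ\kappa$. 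You advertise the one-dimensionality of $H^2_b(T;\R)$ in your opening sentence but never use it; the false claim about $\iota$ stands in for the missing argument.

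The half you do labour over is the easy one. The projection $\kappa\col E\to\widetilde T$, $\bigl(\left(\begin{smallmatrix} i\\ j\end{smallmatrix}\right),t\bigr)\mapsto(i,t)$, \emph{is} a homomorphism, so monotonicity of $\scl$ gives $\scl_E\bigl(\left(\begin{smallmatrix} i_0\\0\end{smallmatrix}\right),t_0\bigr)\geq\scl_{\widetilde T}(i_0,t_0)$ in one line; equivalently, your extension $\widehat{\rot}$ is nothing other than $\rot\circ\kappa$ (the homogeneous primitive of the pulled-back bounded Euler class is unique because $\Hom(E,\R)=0$), so the bounded-cohomology construction only reproves this immediate inequality --- and even there the value and defect bookkeeping (``restricting $\widehat{\rot}$ along $\iota$ \dots\ differ by a homomorphism'') again invokes the non-existent homomorphism $\iota$ and should be run through $\kappa$ instead. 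As written, the proposal establishes only the direction already given by $\kappa$ and leaves the converse, which is the actual content of Claim~\ref{claim:scl preserved}, without proof.
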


\begin{proof}[Proof of Claim \ref{claim:scl preserved}.]
The homomorphism $\kappa \col E \to \widetilde{T}$ defined by $\kappa \col
\bigl( \left( \begin{smallmatrix}
i \\ j
\end{smallmatrix} \right)
, t\bigr) \mapsto (i,t)$ shows by monotonicity of $\scl$ that 
$\scl_E\bigl(
\left( \begin{smallmatrix}
i_0 \\ 0
\end{smallmatrix} \right)
, t_0\bigr) \geq 
\scl_{\widetilde{T}} \kappa\bigl(
\left( \begin{smallmatrix}
i_0 \\ 0
\end{smallmatrix} \right)
, t_0\bigr) =
\scl_{\widetilde{T}} (i_0,t_0)$.
We now prove the converse inequality.

Let $\phi \col E \to \R$ be a homogeneous extremal quasimorphism to the element 
$
\bigl(\left( \begin{smallmatrix}
i_0 \\ 0
\end{smallmatrix} \right)
, t_0\bigr)
$.
Then $\phi$ restricted to the centre of~$E$ is a homogeneous quasimorphism on an abelian group, and thus a homomorphism; see Remark \ref{rem:QHb}.
 Thus there are constants~$\lambda_{{}\Eurm{}}, \lambda_{\Arm} \in \R$ such that
$\phi 
\bigl(\left( \begin{smallmatrix}
i \\ j
\end{smallmatrix} \right)
, \id \bigr) = \lambda_{\Eurm} \cdot i + \lambda_\Arm \cdot j$ for all $i,j \in \Z$.

We will first show that $\lambda_{\Arm}=0$.
For every element~$z$ in the centre and an element~$e \in E$, the group $\langle z, e \rangle$ generated by~$z$ and~$e$ is abelian and hence $\phi$ restricts to a homomorphism on $\langle z, e \rangle$ by again using Remark \ref{rem:QHb}. Hence  we have $\phi(z \cdot e) = \phi(z) + \phi(e)$ for all $z$ in the centre and $e \in E$.
We define~$\Delta \in C^2(T;\R)$ as
$
\Delta(t,t') := \delta^1 \phi
\bigl(
\bigl(
\left( \begin{smallmatrix}
0 \\ 0
\end{smallmatrix} \right)
, t\bigr)
,
\bigl(
\left(
\begin{smallmatrix}
0 \\ 0
\end{smallmatrix} \right)
, t'
\bigr)
\bigr)
$
for all~$t,t' \in T$. 
Then $\Delta$ is uniformly bounded, because $\phi$ is a quasimorphism.
We compute 
\begin{align*}
\Delta(t,t') 
&=
\phi\bigl(\left( \begin{smallmatrix}
0 \\ 0
\end{smallmatrix} \right)
, t\bigr) +
 \phi\bigl(\left( \begin{smallmatrix}
0 \\ 0
\end{smallmatrix} \right)
, t'\bigr)
-
\phi\bigl(\bigl(
\left( \begin{smallmatrix}
0 \\ 0
\end{smallmatrix} \right)
, t\bigr) \cdot
\bigl(
\left( \begin{smallmatrix}
0 \\ 0
\end{smallmatrix} \right)
, t'\bigr)
\bigr)
\\
&=
\delta^1 \psi(t,t') - \lambda_{\Eurm} \cdot ^{T}\Eurm(t,t') - \lambda_{\Arm} \Arm(t,t')
\end{align*}
for all~$t, t' \in T$ and for~$\psi \col T \to \R$ defined via~$\psi \col t \mapsto \phi(\left( \begin{smallmatrix}
0 \\ 0
\end{smallmatrix} \right)
, t)$.
Thus
$$
\lambda_{\Arm} \cdot \Arm - \delta^1 \psi  = - \lambda_{\Eurm} \cdot ^{T}\Eurm - \Delta
$$
and hence $\lambda_{\Arm} \cdot \Arm - \delta^1 \psi $ defines a bounded cocycle as the right hand side is uniformly bounded.
If $\lambda_{\Arm} \not = 0$, then this would imply that $\alpha$ may be represented by a bounded cocycle, which would contradict Proposition \ref{prop:alpha not bounded}. Thus $\lambda_{\Arm}=0$.  

Define a quasimorphism $\phi_{\widetilde{T}}$ on $\widetilde{T}$ by setting $\phi_{\widetilde{T}}(i,t) = 
\phi(\left( \begin{smallmatrix}
i \\ 0
\end{smallmatrix} \right)
, t)
$
and observe that $\phi_{\widetilde{T}}$ is homogeneous as well
and that
$
\phi_{\widetilde{T}}(i_0,t_0)
=
\phi( 
\left(
\begin{smallmatrix}
i_0 \\ 0
\end{smallmatrix} \right)
,t_0))
$.
For all $i,i',j,j' \in \Z$, $t,t' \in T$ we compute that
$$
\delta^1
\phi \Bigl( \Bigl( \Big( \begin{smallmatrix}
i \\ j
\end{smallmatrix} \Big)
, t\Bigr),
\Bigl(
\Big(  \begin{smallmatrix}
i' \\ j'
\end{smallmatrix} \Big)
, t'\Bigr)
\Bigr)
=
\delta^1 \psi(t,t') - \lambda_{\Eurm} ^{T} \Eurm(t,t')
= \delta^1 \phi_{\widetilde{T}} \bigl((i,t),(i',t') \bigr)
$$
and thus $D(\phi) = D(\phi_{\widetilde{T}})$.

Using Bavard's duality theorem we compute
$$
\scl_{\widetilde{T}}(i_0,t_0) \geq \frac{\phi_{\widetilde{T}}(i_0,t_0)}{2 D(\phi_{\widetilde{T}})}
= 
\frac{
\phi( 
\left(
\begin{smallmatrix}
i_0 \\ 0
\end{smallmatrix} \right)
,t_0)}{2 D(\phi)}
=
\scl_E(
\left(
\begin{smallmatrix}
i_0 \\ 0
\end{smallmatrix} \right)
,t_0).
$$
This proves the other inequality and thus finishes the proof of Claim \ref{claim:scl preserved}.
\end{proof}

We may now finish the proof of Proposition~\ref{prop:scl on central extension of thompsons group}. 
Every~$q \in \Q_{\geq 0}$ is the stable commutator length of some element~$t_q \in \widetilde{T}$ by Corollary \ref{cor: every rational value is scl in thomps}. Using Claim~\ref{claim:scl preserved}, we may construct an element~$e_q \in E$ with $\scl_E e_q =  \scl_{\tilde{T}} t_q = q$.
\end{proof}

\section{Fillings}\label{sec:fillings}

Stable commutator length can be interpreted as a homological filling
norm (Section~\ref{subsec:sclfill}).  After recalling the basic
notions and properties, we will use this interpretation to compute the
$l^1$-semi-norm of classes related to decomposable relators
and thus prove Theorem~\ref{theorem:doubling}
(Section~\ref{subsec:decomprel}). This will allow us to establish the
group-theoretic version of the no-gap theorem (Theorem~\ref{theorem:nogapgroup});
for the proof of theorem~\ref{theorem:doubling} and Theorem~\ref{theorem:nogapgroup},
we will only need the filling norm in dimension~$2$, as already considered
by Bavard~\cite{Bavard} and Calegari~\cite[Chapter~2.5/2.6]{Calegari}.
Moreover, we will explain how in the higher-dimensional case
the simplicial volume of manifolds can
also be viewed as a filling norm (Section~\ref{subsec:svfill}).

\subsection{Stable filling norms} 

We first recall the stable filling norm for the bar complex. We will then
extend this notion to topological spaces and higher degrees. For a group~$G$,
the bar complex~$C_{\bullet}(G;\R)$ (computing~$H_{\bullet}(G;\R)$) has the following form
in low degrees: We have~$C_1(G;\R) = \R[G]$ and $\partial_1 = 0$ as
well as~$C_2(G;\R) = \R[G]^2$ and
\begin{align*}
  \partial_2 \colon C_2(G;\R) & \longrightarrow C_1(G;\R) \\
  G \times G \ni (g,h) & \longmapsto g + h - g \cdot h.
\end{align*}
Moreover, the chain modules of~$C_{\bullet}(G;\R)$ are endowed with the $l^1$-norm
corresponding to the bar bases.

\begin{defn}[(stable) filling norm]\label{def:sfillbar}
  Let $G$ be a group. 
  \begin{itemize}
  \item If $c \in \partial C_2(G;\R)$, the \emph{filling norm of~$c$}
    is defined as
    \[ \fillc_G c := \inf \bigl\{ |b|_1 \bigm| b \in C_2(G;\R),\ \partial b = c \bigr\}.
    \]
  \item
    Let $m \in \N$ and let $r_1,\dots, r_m \in G'$.
    The \emph{stable filling norm of~$r_1 + \dots + r_m$} is defined as
    \[ \sfill_G (r_1 + \dots + r_m) := \lim_{n \rightarrow \infty}
       \frac1n \cdot \fillc_G (r_1^n + \dots + r_m^n).
    \]
  \end{itemize}
\end{defn}

Notice that the limit in the definition of the stable filling norm indeed
exists~\cite[p.~34]{Calegari}.

For the generalisation to topological spaces, we replace group
elements by loops (or maps from simplicial spheres) and we replace
taking powers of group elements by composition with self-maps of
spheres of the corresponding degree.

\begin{defn}[topological (stable) filling norms]\label{def:sfill}
  Let $d\in \N$, let $X$ be a topological space, and let $\sigma
  \colon \partial \Delta^d \longrightarrow X$ be continuous.
  \begin{itemize}
  \item If $c \in \partial(C_d(X;\R))$, the \emph{filling norm of~$c$}
    is defined as
    \[ \fillc_X c := \inf \bigl\{ |b|_1 \bigm| b \in C_d(X;\R),\ \partial b = c \bigr\}.
    \]
  \item The \emph{filling norm of~$\sigma$} is then defined as
    \[ \fillc_X \sigma := \fillc_X c_\sigma
     = \inf \bigl\{ |b|_1 \bigm| b \in C_d(X;\R),\ \partial b = c_\sigma\bigr\},
    \]
    where $c_\sigma := C_{d-1}(\sigma;\R)(\partial \id_{\Delta^d}) \in C_{d-1}(X;\R)$
    is the canonical singular cycle associated with~$\sigma$.
  \item The \emph{stable filling norm of~$\sigma$} is defined as
    \[ \sfill_X \sigma := \lim_{n \rightarrow \infty} \frac{\fillc_X \sigma[n]}n,
    \]
    where for~$n \in \N$, we write~$w_n\colon \partial \Delta^d \longrightarrow \partial \Delta^d$
    for ``the'' standard self-map of~$\partial \Delta^d \cong S^{d-1}$ of degree~$n$ and 
    $\sigma[n] := \sigma \circ w_n$.
  \item If $m \in \N$ and $\sigma_1, \dots, \sigma_m \colon \partial
    \Delta^d \longrightarrow X$ are continuous maps, then we define
    \begin{align*}
      \fillc_X (\sigma_1 + \dots + \sigma_m)
      & := \fillc_X (c_{\sigma_1} + \dots + c_{\sigma_m})
      \\
      \sfill_X (\sigma_1 + \dots + \sigma_n)
      & := \lim_{n \rightarrow \infty} \frac1n \cdot \fillc_X \bigl(\sigma_1[n] + \dots + \sigma_m[n]\bigr)
    \end{align*}
  \end{itemize}
\end{defn}

\begin{rmk}[existence of the stabilisation limit]\label{rem:sfilllim}
  The limits in the situation of the definition above indeed exist:
  For notational convenience, we only prove the existence in the case
  of~$\sfill_X \sigma$; the general case can be proved in the same way
  (with additional indices). The argument is similar to the one for
  the stable filling norm in the bar complex. The only complication is
  that, in order to compare different ``powers'', we will need to use
  the uniform boundary condition for~$C_{\bullet}(\partial\Delta^d;\R)$.

  Because $\pi_1(\partial \Delta^d)$ is amenable, there exists a
  constant~$K \in \R_{>0}$ with the following property~\cite{matsumotomorita,fauserloehvarubc}: 
  For every~$z \in
  \partial (C_d(\partial \Delta^d;\R))$ there is a~$b \in C_d(
  \partial \Delta^d;\R)$ with
  \[ z = \partial b
  \qand
    |b|_1 \leq K \cdot |z|_1.
  \]

  If~$n,m \in \N$, then the chains~$c_{w_n} + c_{w_m}$ and $c_{w_{n+m}}$
  are homologous in the complex~$C_{\bullet}(\partial \Delta^d;\R)$ (because $\deg w_n + \deg w_m
  = n + m = \deg w_{n+m}$). Thus, there exists a chain~$b_{n,m} \in C_d(\partial \Delta^d;\R)$
  such that
  \[ c_{w_{n+m}} - c_{w_n} - c_{w_m} = \partial b_{n,m}
  \qand
  |b_{n,m}|_1 \leq K \cdot 3 \cdot (d+1).
  \]
  Hence, for every continuous map~$\sigma \colon \partial \Delta^d \longrightarrow X$
  we obtain
  \begin{align*}
    c_{\sigma[n+m]} - c_{\sigma[n]} - c_{\sigma[m]}
    & = \partial C_d(\sigma;\R)(b_{n,m}) \qand
    \\
    \bigl|C_d(\sigma;\R)(b_{n,m})\bigr|_1
    & \leq |b_{n,m}|_1 \leq K \cdot 3 \cdot (d+1),
  \end{align*}
  and so
  \[ \fillc_X \sigma[n+m] \leq \fillc_X \sigma[n] + \fillc_X \sigma[m] + K \cdot 3 \cdot (d+1).
  \]
  Now elementary analysis shows that the limit~$\lim_{n\rightarrow
    \infty} 1/n \cdot \fillc_X\sigma[n]$ does exist.
\end{rmk}

\begin{rmk}[change of the self-maps]
  The map~$w_n$ is only unique up to homotopy, but homotopic choices
  for~$w_n$ lead to the same stable filling norm; this can be seen
  using the uniform boundary condition as in the proof of the
  existence of the stable filling limits
  (Remark~\ref{rem:sfilllim}). Therefore, this ambiguity will be of no
  consequence for us.
\end{rmk}
  
\begin{rmk}[change of the singular models]
  In the situation of Definition~\ref{def:sfill}, we could choose
  other singular cycle models of~$\sigma$ than~$c_\sigma$: If $c' \in
  C_{d-1}(\partial \Delta^{d};\R)$ is a fundamental cycle of~$\partial
  \Delta^d$, if $b' \in C_d(\partial \Delta^d;\R)$ satisfies~$\partial
  b' = c' - c$, and if $c'_\sigma := C_{d-1}(\sigma;\R)(c')$, then
  \begin{align*}
    \sfill_X \sigma
  & = \lim_{n \rightarrow \infty} \frac{1}n
    \cdot \inf \bigl\{ |b|_1 \bigm| b \in C_d(X;\R),\ \partial b = c_{\sigma[n]}\bigr\}
  \\  
  & = \lim_{n \rightarrow \infty} \frac{1}n
  \cdot \inf \bigl\{ \bigl|b+C_d(\sigma[n];\R))(b')\bigr|_1
  \bigm| b \in C_d(X;\R),\ \partial b = c_{\sigma[n]}\bigr\}
  \\  
  & = \lim_{n \rightarrow \infty} \frac{1}n
  \cdot \inf \bigl\{ |b|_1 \bigm| b \in C_d(X;\R),\ \partial b = c'_{\sigma[n]}\bigr\}.
  \end{align*}
  For the second equality, we use that $|C_d(\sigma[n];\R)(b')|_1 \leq |b'|_1$
  holds for all~$n \in \N$ (so that the difference in norm is negligible when
  taking~$n \rightarrow \infty$).
\end{rmk}

\begin{rmk}[bar filling vs.\ topological filling]\label{rem:fillcal}
  Let $G$ be a group, let $m \in \N$, and let $r_1, \dots, r_m \in
  G'$. If $X$ is a model of~$BG$ and $\sigma_1, \dots, \sigma_m \colon
  \partial \Delta^2 \longrightarrow X$ are loops
  representing~$r_1,\dots, r_m$, respectively, then
  \[ \sfill_G (r_1 + \dots + r_m) = \sfill_X (\sigma_1 + \dots +\sigma_m).
  \]
  
  This can be seen as follows: The standard constructions produce 
  chain maps~$\varphi \colon C_{\bullet}(G;\R) \longrightarrow C_{\bullet}(X;\R)$
  (choosing paths in~$\widetilde X$ for each group element and inductively
  filling the simplices)
  and $\psi \colon C_{\bullet}(X;\R) \longrightarrow C_{\bullet}(G;\R)$ (choosing a set-theoretic
  fundamental domain~$D$ for the deck transformation action on~$\widetilde X$
  and looking at the translates of~$D$ that contain the vertices of the lifted
  simplices) with the following properties:
  \begin{itemize}
  \item $\psi \circ\varphi = \id_{C_{\bullet}(G;\R)}$,
  \item $\varphi \circ \psi \simeq \id_{C_{\bullet}(X;\R)}$ through a chain homotopy~$h$ 
    that is bounded in every degree,
  \item $\|\varphi\| \leq 1$ and $\|\psi\| \leq 1$.
  \end{itemize}
  The first and third conditions easily imply that
  \[ \fillc_G (r_1 + \dots + r_m) \leq \fillc_X(\sigma_1 + \dots + \sigma_m);
  \]
  moreover, we have~$\psi(\sigma_j[n]) =r_j^n$ for all~$j \in \{1,\dots,m\}$
  and all~$n \in \N$. Hence, 
  \[ \sfill_G (r_1 + \dots + r_m) \leq \sfill_X (\sigma_1 + \dots + \sigma_m).
  \]
  
  Conversely, if~$b \in C_2(G;\R)$ satisfies~$\partial b = r_1 + \dots + r_m$, then
  \[ \overline b := \varphi (b) + h(\sigma_1 + \dots + \sigma_m)
  \]
  satisfies
  \begin{align*}
    \partial \overline b
    & = \partial \varphi (b) + \partial h (\sigma_1 + \dots + \sigma_m)
    \\
    & =\varphi (\partial b) + \sigma_1 + \dots + \sigma_m - \varphi \circ \psi (\sigma_1 + \dots + \sigma_m)
    \\
    & = \varphi (r_1 + \dots + r_m) + \sigma_1 + \dots + \sigma_m - \varphi(r_1 + \dots + r_m)
    \\
    & = \sigma_1 + \dots + \sigma_m.
  \end{align*}
  Thus,
  $ \fillc_X (\sigma_1 + \dots + \sigma_m )
  \leq |\overline b|_1
  \leq |b|_1 + \|h \| \cdot m.
  $
  Taking the infimum over all such~$b$ results in
  \[ \fillc_X (\sigma_1 + \dots + \sigma_m) \leq \fillc_G (r_1 + \dots + r_m) + \| h \| \cdot m.
  \]
  Passing to the stabilisation limit, we obtain
  \begin{align*}
    \sfill_X (\sigma_1 + \dots + \sigma_m)
    & = \lim_{n \rightarrow \infty} \frac1n \cdot \fillc_X \bigl(\sigma_1[n] + \dots + \sigma_m[n]\bigr)
    \\
    & \leq \lim_{n \rightarrow \infty} \frac1n \cdot \bigl( \fillc_G (r_1^n + \dots + r_m^n) + \|h\| \cdot m\bigr)
    \\
    & = \lim_{n \rightarrow \infty} \frac1n \cdot \fillc_G (r_1^n + \dots + r_m^n)
    \\
    & = \sfill_G (r_1 + \dots + r_m).
  \end{align*}
\end{rmk}

\subsection{Stable commutator length as filling invariant}\label{subsec:sclfill}

The fact that every commutator consists of four pieces has the following
generalisation in terms of filling norms:

\begin{lemma}[$\scl$ as filling invariant]\label{lem:sclfill}
  Let $G$ be a group, let $m \in \N$, and let $r_1, \dots, r_m \in G'$. Then
  \[ \scl_G (r_1 + \dots + r_m) = \frac 14 \cdot \sfill_{G} (r_1 + \dots + r_m).
  \]
\end{lemma}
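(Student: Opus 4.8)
The plan is to prove the identity by establishing both inequalities, relating the stable filling norm $\sfill_G$ to $\scl_G$ through the standard "genus-to-commutator" dictionary, but carried out at the level of the bar complex $C_{\bullet}(G;\R)$ rather than via surfaces. The key bookkeeping device is that a chain $b \in C_2(G;\R)$ with $\partial b = r_1 + \dots + r_m$ decomposes, after clearing denominators and approximating real coefficients by rationals, into finitely many integral $2$-chains; and an integral $2$-chain with prescribed boundary a sum of commutator-subgroup elements encodes (the bar-complex shadow of) a map from a surface-with-boundary, whose complexity is controlled by $|b|_1$. Conversely, writing $r_i^n$ as a product of $\cl_G(r_i^n)$ commutators produces, via the standard filling of a commutator by $4$ simplices in the bar complex, an explicit chain witnessing $\fillc_G(r_1^n + \dots + r_m^n) \le 4 \cdot \cl_G(r_1^n + \dots + r_m^n) + o(n)$.

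\textbf{The "$\le$" direction ($4 \cdot \scl \le \sfill$), i.e.\ the lower bound for $\sfill$.} First I would dualise. By construction $\fillc_G c = \inf\{|b|_1 : \partial b = c\}$ is the norm of $c$ in the quotient $\partial C_2(G;\R) \subset C_1(G;\R)$, and its dual description is $\fillc_G c = \sup\{\langle \varphi, c\rangle : \varphi \in C^1(G;\R),\ \|\delta^1 \varphi\|_\infty \le 1\}$ after suitably normalising; passing to the stabilisation, $\sfill_G(r_1 + \dots + r_m) = \sup_\phi |\phi(r_1) + \dots + \phi(r_m)|$ where $\phi$ ranges over homogeneous quasimorphisms with $D(\phi) \le 1$. (Here $\delta^1\varphi$ being bounded by $1$ is precisely the defect-$1$ condition, and homogeneity emerges from the stabilisation, exactly as in the proof that $\scl$ has a Bavard-dual description.) Comparing this with Bavard's duality theorem (Theorem~\ref{thm:bavard's duality}) applied to the chain $r_1 + \dots + r_m$, namely $\scl_G(r_1 + \dots + r_m) = \sup_\phi |\phi(r_1) + \dots + \phi(r_m)| / (2 D(\phi))$, and rescaling $\phi \mapsto \phi/D(\phi)$, we read off $\sfill_G(r_1 + \dots + r_m) = 4 \cdot \scl_G(r_1 + \dots + r_m)$ in one stroke — \emph{provided} the dual description of $\sfill_G$ is justified. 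So really the proof reduces to: (a) establishing the quasimorphism-dual formula for $\sfill_G$, which is a Hahn--Banach argument on $C_1(G;\R)/\overline{\partial C_2(G;\R)}$ together with the observation that $\delta^1$ of a $1$-cochain has $\ell^\infty$-norm equal to its defect; and (b) the stabilisation step, where homogeneity of the optimal dual cochain is forced by passing to $r_i^n/n$, using the uniform boundary condition for the amenable groups $\pi_1(\partial\Delta^2)=\Z$ to control the defect of sums, just as in Remark~\ref{rem:sfilllim}.

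\textbf{Clean alternative for the explicit upper bound.} For the inequality $\sfill_G \le 4 \cdot \scl_G$ one can also argue directly and concretely: if $g = [x_1,y_1]\cdots[x_k,y_k]$ with $k = \cl_G g$, the standard bar-complex filling of a single commutator $[x,y] = x + y + \overline{x} + \overline{y} + (\text{boundary terms})$ shows $\fillc_G(g) \le 4k + C$ for a universal constant $C$ absorbing the "glueing" $2$-simplices needed to splice the $k$ commutator-fillings along the relation $[x_1,y_1]\cdots[x_k,y_k] = g$ (these are $O(k)$ simplices, but with a better constant, and in the stable limit one does a single long product $g^n = $ product of $\cl_G(g^n)$ commutators so the glueing cost is $o(n)$). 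Stabilising gives $\sfill_G g \le 4 \scl_G g$, and likewise for chains $r_1 + \dots + r_m$ by applying the same to the conjugated product $t_1 r_1^n t_1^{-1}\cdots t_m r_m^n t_m^{-1}$ realising $\cl_G(r_1^n + \dots + r_m^n)$.

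\textbf{Main obstacle.} The delicate point is part (a)/(b) above: making the passage from the (finite, unstable) duality $\fillc_G = \sup$ over defect-$\le 1$ cochains to the \emph{stable} statement involving genuine \emph{homogeneous} quasimorphisms. The subtlety is the same one Calegari addresses \cite[Chapter~2.6]{Calegari}: a defect-$1$ cochain $\varphi$ need not be homogeneous, and its homogenisation $\overline\varphi$ only satisfies $D(\overline\varphi) \le 2 D(\varphi)$, which would lose a factor of $2$ and give the wrong constant; the point is that in the stable limit the factor of $2$ disappears because one evaluates on $r_i^n$ and divides by $n$, and $\overline\varphi(r) = \lim_n \varphi(r^n)/n$ while simultaneously the \emph{relevant} quantity $|\delta^1\varphi|_\infty$ controlling the filling of $\sigma_i[n]$ does not deteriorate under this averaging. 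Getting this bookkeeping exactly right — so that the two factors of $2$ (one from $\cl$ vs.\ $\scl$ normalisation in Bavard, one from the $4$ pieces of a commutator) combine to the clean $\scl = \frac14 \sfill$ — is where all the care is needed; everything else is either Hahn--Banach or the uniform boundary condition for $C_\bullet(\partial\Delta^2;\R)$ already invoked in Remark~\ref{rem:sfilllim}.
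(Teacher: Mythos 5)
Your overall architecture (dualise the filling norm, compare with Bavard, plus a direct chain construction for the other inequality) is a reasonable alternative to the paper's treatment, which simply cites Bavard and Calegari; but as written both halves of your argument carry the wrong constants, and in both cases the error is exactly at the point where the factor of~$2$ lives. For the duality half: your claimed formula $\sfill_G(r_1+\dots+r_m)=\sup\{\sum_i\phi(r_i)\colon \phi \text{ homogeneous},\ D(\phi)\le 1\}$ is inconsistent with the very statement you want to prove. Indeed, by the (chain version of) Bavard duality that you invoke, the right-hand side equals $\sup_{\phi\ \mathrm{hom}}\frac{\sum_i\phi(r_i)}{D(\phi)}=2\cdot\scl_G(r_1+\dots+r_m)$, so your ``one stroke'' comparison yields $\scl=\tfrac12\sfill$, not $\tfrac14\sfill$. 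The correct dual statement coming out of Hahn--Banach and stabilisation is $\sfill_G=\sup\{\sum_i\overline{\varphi}(r_i)\colon D(\varphi)\le 1\}$ where $\varphi$ ranges over \emph{all} (in general non-homogeneous) quasimorphisms and $\overline\varphi$ is the homogenisation; to convert this into a sup over homogeneous quasimorphisms one needs the genuinely non-trivial sharp fact (due to Bavard) that every homogeneous quasimorphism $\phi$ lies at bounded distance from a quasimorphism of defect arbitrarily close to $D(\phi)/2$, equivalently $\|[\delta^1\phi]\|_\infty=D(\phi)/2$ in $H^2_b(G;\R)$. Your proposed mechanism --- that ``the factor of $2$ disappears in the stable limit'' because homogenisation does not change stable evaluations --- does not do this: stabilisation preserves the evaluation but does nothing to lower the defect, and without the defect-halving input the dual route only gives $\sfill\ge 2\,\scl$. (There is also a soft circularity worry: the chain version of Bavard duality is not stated in the paper, and in Calegari's book it is itself derived from the filling-norm machinery you are trying to prove.)

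The ``clean alternative'' for $\sfill\le 4\,\scl$ has the mirror-image bookkeeping error. If you fill each commutator by breaking $[x,y]$ into the letters $x+y+x^{-1}+y^{-1}$, you are left with those letter terms, and cancelling them plus splicing the $k$ commutator fillings along $g=[x_1,y_1]\cdots[x_k,y_k]$ costs a number of simplices proportional to $k=\cl_G(g^n)\sim n\cdot\scl_G(g)$ --- this is \emph{not} $o(n)$ and cannot be absorbed into a universal constant; the naive count gives roughly $8k$, hence only $\sfill\le 8\,\scl$. The repair is to use a more efficient filling in which each commutator costs $3$ bar simplices, e.g.\ $\partial\bigl((y,x)-(x,y)+([x,y],yx)\bigr)=[x,y]$, and then one splicing simplex per factor, giving $\fillc_G(g)\le 4\cdot\cl_G(g)-1$ (this is the bar-complex shadow of the one-vertex triangulation of the genus-$k$ surface with one boundary component by $4k-1$ triangles, i.e.\ of $\|S_{k,1},\partial S_{k,1}\|=-2\chi$); together with the conjugation/splitting corrections of bounded cost per term this yields $\sfill\le 4\,\scl$ also for chains. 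So the two directions can be salvaged, but only by adding precisely the two ingredients your sketch is missing: the defect-halving lemma for homogeneous quasimorphisms in the dual direction, and the $4k-1$ (rather than ``$4$ per commutator plus negligible glueing'') count in the constructive direction.
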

\begin{proof}
  In the case of a single relator, this observation goes back to
  Bavard~\cite[Proposition~3.2]{Bavard}\cite[Lemma~2.69]{Calegari}. Calegari
  extended this equality to the case of linear
  combinations~\cite[Lemma~2.77]{Calegari}.
\end{proof}

Furthermore, as Calegari~\cite{whatisscl} puts it: ``One can interpret
stable commutator length as the infimum of the $L^1$ norm (suitably
normalized) on chains representing a certain (relative) class in group
homology.'' We will prove this statement in Corollary~\ref{cor:cylrelgen}
as a special case of the following generalisation:

\begin{prop}[relative $\l^1$-semi-norm as filling invariant]\label{prop:relsvfill}
  Let $Z$ be a CW-complex, let $m \in \N_{>0}$, $d \in \N_{\geq 2}$, let $\partial Z \subset Z$
  be a subspace that is homeomorphic to~$\coprod_m \partial\Delta^d$ and such
  that the inclusions~$\sigma_1 ,\dots, \sigma_m \colon \partial \Delta^d
  \longrightarrow Z$ of the $m$~components of~$\partial Z$ into~$Z$ are $\pi_1$-injective
  (this is automatic if~$d \geq 3$).
  \begin{enumerate}
  \item If $\beta \in H_d(Z,\partial Z;\R)$ with~$\partial \beta = [\partial Z]_\R$, then
    \[ \| \beta\|_1 \geq \sfill_Z (\sigma_1 + \dots + \sigma_m).
    \]
  \item
    If the connecting homomorphism~$\partial \colon H_d(Z,\partial Z;\R)
    \longrightarrow H_{d-1} (\partial Z ;\R)$ is an isomorphism and if
    $\beta \in H_d(Z ,\partial Z;\R)$ is the class with~$\partial
    \beta = [\partial Z]_\R$, then
    \[ \| \beta\|_1 = \sfill_Z (\sigma_1 + \dots + \sigma_m).
    \]
  \end{enumerate}
\end{prop}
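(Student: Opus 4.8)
The plan is to prove both statements by relating a filling of the boundary cycle to a relative cycle representing $\beta$, and conversely. The key point is that attaching $(d{+}1)$-dimensional cones (or, more precisely, working at chain level) converts a bounding chain into a relative fundamental cycle and vice versa, at the cost of a controlled error that disappears in the stabilisation limit.

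\textbf{Statement (1): $\|\beta\|_1 \geq \sfill_Z(\sigma_1 + \dots + \sigma_m)$.}
First I would fix $n \in \N$ and let $Z_n$ be the space obtained from $Z$ by gluing, to each boundary component $\sigma_j(\partial\Delta^d)$, the mapping cylinder of the degree-$n$ self-map $w_n \colon \partial\Delta^d \to \partial\Delta^d$; write $\partial Z \hookrightarrow Z_n$ and let $\sigma_j[n]$ denote the new boundary inclusions. The inclusion $Z \hookrightarrow Z_n$ is a homotopy equivalence, so it identifies $H_\bullet(Z,\partial Z;\R)$ with $H_\bullet(Z_n,\partial Z;\R)$ up to a norm-non-increasing map; hence $\beta$ gives a relative class $\beta_n$ with $\partial\beta_n = [\partial Z]_\R$ (the same boundary cycle, now viewed via $\sigma_j[n]$, which on homology represents $n$ times the generator). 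Choosing a relative cycle $c$ representing $\beta$ with $|c|_1 \leq \|\beta\|_1 + \varepsilon$, its image in $C_d(Z_n;\R)$ has boundary a fundamental cycle of $\coprod_m \partial\Delta^d$ pushed forward along the $\sigma_j[n]$; subtracting this off, $c$ itself (plus a negligible correction coming from the difference between $C_{d-1}(\sigma_j[n];\R)(\partial\id_{\Delta^d})$ and the actual boundary $\partial c$, which are homologous in the amenable space $\partial\Delta^d$ and hence differ by a chain of $l^1$-norm $\leq K\cdot 3(d{+}1)$ by the uniform boundary condition as in Remark~\ref{rem:sfilllim}) is a chain in $Z$ filling $c_{\sigma_1[n]} + \dots + c_{\sigma_m[n]}$ up to that error. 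Therefore $\fillc_Z(\sigma_1[n] + \dots + \sigma_m[n]) \leq \|\beta\|_1 + \varepsilon + m\cdot K\cdot 3(d{+}1)$; dividing by $n$, letting $n \to \infty$ and then $\varepsilon \to 0$ gives $\sfill_Z(\sigma_1 + \dots + \sigma_m) \leq \|\beta\|_1$.

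\textbf{Statement (2): the reverse inequality under the isomorphism hypothesis.}
For the converse I would start from an arbitrary filling $b \in C_d(Z;\R)$ with $\partial b = c_{\sigma_1[n]} + \dots + c_{\sigma_m[n]}$ and turn it into a relative cycle. The chain $\partial b$ is a cycle in $C_{d-1}(\partial Z;\R)$ representing $n$ times the generator of each component, so $b$ is automatically a relative cycle in $C_d(Z,\partial Z;\R)$, and its relative class $\gamma_n$ satisfies $\partial\gamma_n = n\cdot(\text{generator of }H_{d-1}(\partial Z;\R))$. Here is where the hypothesis that $\partial \colon H_d(Z,\partial Z;\R) \to H_{d-1}(\partial Z;\R)$ is an isomorphism enters: it forces $\gamma_n = n\cdot\beta$, and hence $\|n\beta\|_1 = n\|\beta\|_1 \leq |b|_1$. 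Taking the infimum over fillings $b$ gives $n\|\beta\|_1 \leq \fillc_Z(\sigma_1[n] + \dots + \sigma_m[n])$, and dividing by $n$ and passing to the limit yields $\|\beta\|_1 \leq \sfill_Z(\sigma_1 + \dots + \sigma_m)$, which combined with (1) gives equality. (The $\pi_1$-injectivity of the $\sigma_j$ plays its role in guaranteeing that the homology of $\partial Z$ behaves as expected and, via the mapping theorem for the $l^1$-semi-norm, that the stabilisation does not collapse; when $d \geq 3$ this is automatic since $\partial\Delta^d$ is simply connected.)

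\textbf{Main obstacle.}
The delicate point is the bookkeeping in (1): a relative cycle $c$ for $\beta$ does not literally have boundary equal to the canonical cycle $c_{\sigma_j[n]}$ associated with $\sigma_j[n]$ — it only has \emph{some} fundamental cycle of $\partial\Delta^d$ as boundary on each component — so one must absorb the discrepancy using the uniform boundary condition for $C_\bullet(\partial\Delta^d;\R)$, exactly as in Remark~\ref{rem:sfilllim}, and check that the resulting additive error is independent of $n$ so that it vanishes after dividing by $n$. Getting the constant $K\cdot 3(d{+}1)$ (or any $n$-independent bound) cleanly, and making sure the homotopy equivalence $Z \hookrightarrow Z_n$ is compatible with the boundary identifications at the chain level, is the part that requires care; everything else is a formal consequence of the definitions of $\fillc$, $\sfill$, and the relative $l^1$-semi-norm.
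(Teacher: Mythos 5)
Your part~(2) is essentially the paper's argument and is fine: a filling $b$ of $c_{\sigma_1[n]}+\dots+c_{\sigma_m[n]}$ is a relative cycle whose class is forced (by the isomorphism hypothesis) to be $n\cdot\beta$, whence $n\cdot\|\beta\|_1\leq|b|_1$. Part~(1), however, has a genuine gap, in fact two. First, a scaling error: the canonical cycle $c_{\sigma_j[n]}$ represents $n$ times the fundamental class of the $j$-th boundary component, while the boundary $\partial c$ of your near-optimal relative cycle $c$ for $\beta$ represents $[\partial Z]_\R$ once; for $n>1$ these are \emph{not} homologous in $\partial Z$, so the ``negligible correction'' you invoke does not exist, and $c$ cannot fill $c_{\sigma_1[n]}+\dots+c_{\sigma_m[n]}$ up to a bounded error. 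Indeed your claimed bound $\fillc_Z(\sigma_1[n]+\dots+\sigma_m[n])\leq\|\beta\|_1+\varepsilon+m\cdot K\cdot 3(d{+}1)$, with right-hand side independent of $n$, would after dividing by $n$ give $\sfill_Z(\sigma_1+\dots+\sigma_m)=0$, which shows the bookkeeping cannot be right. The correct filling chain must scale with $n$: the paper uses $n\cdot(c-b)$, where $b\in C_d(\partial Z;\R)$ fills the null-homologous difference $\partial c-\frac1n(c_{\sigma_1[n]}+\dots+c_{\sigma_m[n]})$, giving $\frac1n\fillc_Z(\sigma_1[n]+\dots+\sigma_m[n])\leq|c-b|_1$.

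Second, even after fixing the scaling, your argument is missing the ingredient that makes the correction term vanish. The uniform boundary condition bounds $|b|_1$ by $K\cdot|\partial c-\frac1n(c_{\sigma_1[n]}+\dots+c_{\sigma_m[n]})|_1\leq K\cdot(|\partial c|_1+m(d{+}1)/n)$, and the term $K\cdot|\partial c|_1$ survives the limit $n\to\infty$. A generic relative cycle with $|c|_1\leq\|\beta\|_1+\varepsilon$ only satisfies $|\partial c|_1\leq(d{+}1)\cdot|c|_1$, which is not small, so one does not obtain $\sfill_Z(\sigma_1+\dots+\sigma_m)\leq\|\beta\|_1$ this way. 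The paper resolves this by invoking the equivalence theorem of Gromov/Bucher et al.\ for $\pi_1$-injective boundary components with amenable fundamental group: it produces relative cycles with $|c|_1\leq\|\beta\|_1+\varepsilon$ \emph{and} $|\partial c|_1\leq\varepsilon$ simultaneously. This is exactly where the $\pi_1$-injectivity hypothesis is used (not the role you assign to it at the end), and your proposal never appeals to it. The auxiliary spaces $Z_n$ built from mapping cylinders of $w_n$ are a red herring: they neither repair the degree mismatch nor control $|\partial c|_1$.
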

\begin{proof}
  We first show the estimate~$\sfill_{Z} (\sigma_1 + \dots + \sigma_m)
  \leq \| \beta\|_1$: Clearly, the group~$\pi_1(\partial \Delta^d)$ is
  amenable; because $\sigma_1,\dots, \sigma_m$ are $\pi_1$-injective,
  the equivalence theorem~\cite{vbc}\cite[Corollary~6]{bbfipp} ensures that
  for every~$\varepsilon \in \R_{>0}$ there exists a relative
  cycle~$c \in C_d(Z;\R)$ representing~$\beta$ in~$H_d(Z,\partial Z;\R)$ with
  \begin{align}
     |c|_1 \leq \| \beta\|_1 + \varepsilon
     \qand
     |\partial c|_1 \leq \varepsilon.
     \label{eq:eqthm}
  \end{align}
  Moreover, because the fundamental group of the $m$~components
  of~$\partial Z$ are amenable, there exists a constant~$K \in
  \R_{>0}$ implementing the uniform boundary condition~\cite{matsumotomorita}: For every~$z \in
  \partial (C_d(\partial Z;\R))$ there is a~$b \in C_d(\partial
  Z;\R)$ with
  \[ z = \partial b
  \qand
    |b|_1 \leq K \cdot |z|_1.
  \]
  Let $\varepsilon \in \R_{>0}$, let $c$ be a relative cycle 
  as in~\eqref{eq:eqthm}, and let   
  $n \in \N$ with~$n \geq 1/\varepsilon$. Then
  $z := \partial c - 1/n \cdot (c_{\sigma_1[n]} + \dots + c_{\sigma_m[n]}) \in C_{d-1}(\partial Z;\R)$
  is a boundary (both summands are fundamental cycles of~$\partial Z$).
  Hence, there exists a~$b \in C_d(\partial Z;\R)$
  with
  \begin{align*}
    \partial b & = \partial c - \frac1n \cdot (c_{\sigma_1[n]} + \dots + c_{\sigma_m[n]})
    \qand
    \\
    |b|_1 & \leq K \cdot \Bigl| \partial c - \frac {1}n
    \cdot (c_{\sigma_1[n]} + \dots + c_{\sigma_m[n]})\Bigr|_1
  \leq K \cdot \Bigl(\varepsilon + m \cdot \frac{d+1}n\Bigr)
  \\ &
  \leq K \cdot \bigl(1 + m \cdot (d+1)\bigr) \cdot \varepsilon.
  \end{align*}
  The chain~$n \cdot (c - b) \in C_d(Z;\R)$ then witnesses
  that
  \begin{align*}
    \frac1 n \cdot \fillc_{Z} \bigl(\sigma_1[n] + \dots + \sigma_m[n]\bigr)
    & \leq \frac1n \cdot n \cdot |c-b|_1 = |c-b|_1
    \\
    &
    \leq \|\beta\|_1 + \varepsilon
    + K \cdot \bigl(1 + m \cdot (d+1)\bigr) \cdot \varepsilon.
  \end{align*}
  Taking first $n\rightarrow \infty$ and then~$\varepsilon \rightarrow 0$
  shows that $\sfill_{Z} (\sigma_1 + \dots + \sigma_m)
  \leq \| \beta\|_1$.
  
  Conversely, we will now prove that~$\| \beta\|_1 \leq \sfill_{Z}
  (\sigma_1 + \dots + \sigma_m)$ under the additional assumption that
  $\partial \colon H_d(Z,\partial Z;\R) \longrightarrow
  H_{d-1}(\partial Z;\R)$ is an isomorphism: Let $n \in \N_{>0}$ and
  let $b \in C_d(Z;\R)$ with~$\partial b = c_{\sigma_1[n]} + \dots +
  c_{\sigma_m[n]}$. In particular, $b$ is a relative cycle for~$(Z,
  \partial Z)$; moreover, $1/n \cdot b$ represents~$\beta$ (because
  $\partial (1/n \cdot b) = 1/n \cdot (c_{\sigma_1[n]} + \dots +
  c_{\sigma_m[n]})$ is a fundamental cycle of~$\partial Z$).  Hence,
  \[ \| \beta\|_1
     \leq \frac 1n \cdot |b|_1.
     \]
  Taking first the infimum over all such~$b$ and then~$n \rightarrow \infty$ yields 
  the desired estimate $\| \beta\|_1 \leq \sfill_{Z} (\sigma_1 + \dots + \sigma_m)$.
\end{proof}

\begin{corr}[$\scl$ as relative $l^1$-semi-norm]\label{cor:cylrelgen}
  Let $G$ be a group with~$H_2(G;\R) \cong 0$, let $m \in \N$, let $r_1,\dots, r_m \in G'$ be elements
  of infinite order, and let $X$ be a model of~$BG$.
  Let
  \begin{align*}
    Z
    & := X \cup_{r_1,\dots, r_m} \Bigl( \coprod_m S^1 \times [0,1]\Bigr)
    \\
    & := X \cup_{\coprod_{j=1}^m \gamma_j \text{ on~$\coprod_m S^1 \times\{0\}$}}
         \Bigl( \coprod_m S^1 \times [0,1] \Bigr)
  \end{align*}
  be the mapping cylinder associated with (loops~$\gamma_1,\dots, \gamma_r$ in~$X$
  representing) the elements~$r_1,\dots, r_m$, and
  let $\partial Z := \coprod_m S^1 \times \{1\} \subset Z$.
  Then there exists a unique relative homology class~$\beta \in H_2(Z,\partial Z;\R)$ 
  whose boundary class~$\partial \beta$ is the fundamental class of~$\coprod_m S^1$;
  the class~$\beta$ satisfies
  \[ \|\beta\|_1 = 4 \cdot \scl_G ( r_1 + \dots + r_m).
  \]
\end{corr}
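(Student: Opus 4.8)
The plan is to obtain this corollary by specialising Proposition~\ref{prop:relsvfill} to degree $d = 2$ and then translating the resulting stable filling norm on $Z$ into stable commutator length in $G$ via Remark~\ref{rem:fillcal} and Lemma~\ref{lem:sclfill}. First I would record that the mapping cylinder $Z$ deformation retracts onto $X$, so that $Z$ is again a model of~$BG$; in particular $\pi_1(Z) \cong G$, and under this identification the inclusion $\sigma_j \colon \partial \Delta^2 \cong S^1 \times \{1\} \hookrightarrow Z$ of the $j$-th boundary component of~$\partial Z$ represents~$r_j$. Since $r_j$ has infinite order, $\sigma_j$ is $\pi_1$-injective, and by construction $\partial Z$ is homeomorphic to~$\coprod_m \partial\Delta^2$; thus the hypotheses of Proposition~\ref{prop:relsvfill} are satisfied with $d = 2$.

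Next I would verify that the connecting homomorphism is an isomorphism by inspecting the long exact sequence of the pair $(Z,\partial Z)$ with $\R$-coefficients:
\[
  \underbrace{H_2(\partial Z;\R)}_{= 0}
  \longrightarrow \underbrace{H_2(Z;\R)}_{\cong\, H_2(G;\R)\,=\,0}
  \longrightarrow H_2(Z,\partial Z;\R)
  \xrightarrow{\partial} H_1(\partial Z;\R)
  \longrightarrow H_1(Z;\R),
\]
using $H_2(Z;\R) \cong H_2(X;\R) = H_2(G;\R) = 0$. The last map sends the generator of the $j$-th summand of $H_1(\partial Z;\R) \cong \R^m$ to the class of~$r_j$ in $H_1(Z;\R) \cong H_1(G;\R)$, which vanishes since $r_j \in G'$. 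Hence $\partial \colon H_2(Z,\partial Z;\R) \to H_1(\partial Z;\R)$ is an isomorphism; in particular there is a unique class~$\beta$ with $\partial \beta = [\partial Z]_\R = [\coprod_m S^1]_\R$, and Proposition~\ref{prop:relsvfill}(2) yields $\|\beta\|_1 = \sfill_Z(\sigma_1 + \dots + \sigma_m)$.

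Finally, since $Z$ is a model of~$BG$ and $\sigma_1,\dots,\sigma_m$ represent $r_1,\dots,r_m$, Remark~\ref{rem:fillcal} gives $\sfill_Z(\sigma_1+\dots+\sigma_m) = \sfill_G(r_1 + \dots + r_m)$, and Lemma~\ref{lem:sclfill} rewrites the right-hand side as $4 \cdot \scl_G(r_1 + \dots + r_m)$; chaining the equalities proves the claim. The only nontrivial inputs are Proposition~\ref{prop:relsvfill} and Lemma~\ref{lem:sclfill}; the content specific to this corollary is the homological bookkeeping showing that $\partial$ is an isomorphism — which is exactly where the assumptions $H_2(G;\R)\cong 0$ and $r_j \in G'$ are used — together with the observation that $Z$ remains aspherical so that Remark~\ref{rem:fillcal} applies, so I do not anticipate any serious obstacle.
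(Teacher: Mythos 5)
Your proposal is correct and follows essentially the same route as the paper: verify via the long exact sequence of~$(Z,\partial Z)$ (using $H_2(G;\R)\cong 0$ and $r_j\in G'$) that the connecting map is an isomorphism, check $\pi_1$-injectivity of the boundary inclusions from the infinite-order hypothesis, and then chain Proposition~\ref{prop:relsvfill}, Remark~\ref{rem:fillcal}, and Lemma~\ref{lem:sclfill}. Your extra remark that $Z$ deformation retracts onto~$X$ (so that Remark~\ref{rem:fillcal} applies with $Z$ as a model of~$BG$) only makes explicit what the paper leaves implicit.
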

\begin{proof}
  The long exact homology sequence of the
  pair~$(Z,\partial Z)$ shows that the connecting homomorphism~$\partial \colon H_2(Z,\partial
  Z;\R) \longrightarrow H_1(\partial Z;\R)$ is an isomorphism (by
  hypothesis, $H_2(Z;\R) \cong H_2(X;\R) \cong H_2(G;\R) \cong 0$, and the inclusion~$\partial Z
  \hookrightarrow Z$ induces the trivial homomorphism
  on~$H_1(\args;\R)$ because $r_1, \dots, r_m$ are in the commutator
  subgroup of~$G$). This shows the existence of~$\beta$.

  Because $r_1, \dots, r_m \in G'$ all have infinite order,
  the corresponding inclusions~$\sigma_1,\dots, \sigma_m
  \colon \partial \Delta^2 \longrightarrow Z$ of the components
  of~$\partial Z$ into~$Z$ are $\pi_1$-injective.  

  Applying Proposition~\ref{prop:relsvfill} (using~$S^1 \cong \partial
  \Delta^2$), we obtain
  \[ \|\beta\|_1 = \sfill_Z(\sigma_1 + \dots +\sigma_m).
  \]
  In combination with Remark~\ref{rem:fillcal} and Lemma~\ref{lem:sclfill},
  this shows that
  \[ \|\beta\|_1 = \sfill_{F(S)}(r_1 + \dots + r_m) = 4 \cdot \scl_S(r_1 + \dots + r_m).
  \qedhere
  \]
\end{proof}

\begin{corr}[$\scl$ as relative $l^1$-semi-norm; free groups]\label{cor:cylrelfree}
  Let $S$ be a set, let $m \in \N$, let $r_1,\dots, r_m \in F(S)'$ be non-trivial,
  let
  \begin{align*}
    Z
    & := \Bigl(\bigvee_S S^1\Bigr) \cup_{r_1,\dots, r_m} \Bigl( \coprod_m S^1 \times [0,1]\Bigr)
    \\
    & := \Bigl(\bigvee_S S^1 \Bigr) \cup_{\coprod_{j=1}^m \gamma_j \text{ on~$\coprod_m S^1 \times\{0\}$}}
         \Bigl( \coprod_m S^1 \times [0,1] \Bigr)
  \end{align*}
  be the mapping cylinder associated with (loops~$\gamma_1,\dots, \gamma_r$ representing)~$r_1,\dots, r_m$, and
  let $\partial Z := \coprod_m S^1 \times \{1\} \subset Z$.
  Moreover, let $\beta \in H_2(Z, \partial Z ;\R)$ be the relative homology class
  whose boundary class~$\partial \beta$ is the fundamental class of~$\coprod_m S^1$.
  Then 
  \[ \|\beta\|_1 = 4 \cdot \scl_S ( r_1 + \dots + r_m).
  \]
\end{corr}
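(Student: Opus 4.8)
The plan is to recognise Corollary~\ref{cor:cylrelfree} as the special case~$G = F(S)$ of Corollary~\ref{cor:cylrelgen}. For this I would take the wedge~$\bigvee_S S^1$ as a model of~$BF(S)$: it is aspherical with fundamental group~$F(S)$, and being a one-dimensional CW-complex it has~$H_2(F(S);\R) \cong H_2\bigl(\bigvee_S S^1;\R\bigr) \cong 0$. With this choice of~$X$, the mapping cylinder~$Z$ and the subspace~$\partial Z = \coprod_m S^1 \times \{1\}$ in the statement of Corollary~\ref{cor:cylrelfree} are precisely the objects~$Z$ and~$\partial Z$ of Corollary~\ref{cor:cylrelgen}.

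The only hypothesis of Corollary~\ref{cor:cylrelgen} stated differently is that the relators have infinite order, whereas here we merely assume~$r_1,\dots,r_m \in F(S)'$ to be non-trivial; but free groups are torsion-free, so a non-trivial element automatically has infinite order. Thus all hypotheses are satisfied, and Corollary~\ref{cor:cylrelgen} furnishes the unique relative class~$\beta \in H_2(Z,\partial Z;\R)$ with boundary~$\partial\beta = [\coprod_m S^1]_\R$ together with the identity~$\|\beta\|_1 = 4 \cdot \scl_{F(S)}(r_1 + \dots + r_m)$, which is the asserted formula since~$\scl_S = \scl_{F(S)}$ by our notational convention. If a self-contained argument is preferred, one simply repeats the proof of Corollary~\ref{cor:cylrelgen} verbatim for~$G = F(S)$: the long exact sequence of~$(Z,\partial Z)$ combined with~$H_2(F(S);\R) \cong 0$ and~$r_i \in F(S)'$ shows the connecting map~$\partial\colon H_2(Z,\partial Z;\R) \to H_1(\partial Z;\R)$ is an isomorphism (existence and uniqueness of~$\beta$); the inclusions~$\sigma_i\colon \partial\Delta^2 \to Z$ are $\pi_1$-injective because each~$r_i$ has infinite order, so Proposition~\ref{prop:relsvfill}(2) gives~$\|\beta\|_1 = \sfill_Z(\sigma_1 + \dots + \sigma_m)$; Remark~\ref{rem:fillcal} identifies this with~$\sfill_{F(S)}(r_1 + \dots + r_m)$; and Lemma~\ref{lem:sclfill} rewrites the latter as~$4\cdot\scl_{F(S)}(r_1 + \dots + r_m)$.

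I do not expect any real obstacle: the substantive content — the equivalence theorem and the uniform boundary condition feeding Proposition~\ref{prop:relsvfill}, the bar-versus-topological filling comparison of Remark~\ref{rem:fillcal}, and Bavard's filling identity (Lemma~\ref{lem:sclfill}) — has already been established, and the only point requiring attention is the elementary observation that free groups are torsion-free, which lets one weaken ``of infinite order'' to ``non-trivial'' in the hypotheses.
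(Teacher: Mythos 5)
Your proposal is correct and matches the paper's proof, which likewise notes that $\bigvee_S S^1$ is a model of~$BF(S)$ with $H_2(F(S);\R) \cong 0$ and then applies Corollary~\ref{cor:cylrelgen}. Your extra observation that torsion-freeness of free groups upgrades ``non-trivial'' to ``of infinite order'' is exactly the (implicit) point needed to invoke that corollary.
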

\begin{proof}
  Clearly,  $\bigvee_S S^1$ is a model of~$BF(S)$ and $H_2(F(S);\R) \cong 0$.
  Therefore, we can apply Corollary~\ref{cor:cylrelgen}.
\end{proof}

\subsection{Decomposable relators}\label{subsec:decomprel}

The filling view allows us to compute the $l^1$-semi-norm for
certain classes in degree~$2$ associated to ``decomposable relators''
in terms of stable commutator length. Let us first describe these homology
classes:

\begin{setup}[decomposable relators~I]\label{set:decomprel1}
  Let $G_1$ and $G_2$ be groups that satisfy~$H_2(G_1;\R) \cong 0$ and $H_2(G_2;\R) \cong 0$
  and let $r_1 \in G_1'$, $r_2 \in G_2'$ be elements of infinite order. We then consider the glued group
  \[ D(G_1,G_2,r_1,r_2) :=
     (G_1 * G_2) / \langle r_1 \cdot r_2 \rangle^\triangleleft \cong G_1 *_\Z G_2,
  \]
  where the amalgamation homomorphisms~$\Z \longrightarrow G_1$ and
  $\Z \longrightarrow G_2$ are given by~$r_1$ and~$r_2^{-1}$, respectively.

  Associated with this situation, there is a canonical homology
  class~$\alpha \in H_2(D(G_1,G_2,r_1,r_2);\R)$: Let $X_1$ and
  $X_2$ be classifying spaces for~$G_1$ and $G_2$, respectively.  We
  consider the cylinder spaces
  \begin{align*}
    Z_1 & := X_1 \cup_{r_1 \text{ on~$S^1 \times \{0\}$}}
    \bigl(S^1 \times [0,1]\bigr)
    \\
    Z_2 & := X_2 \cup_{r_2 \text{ on~$S^1 \times \{0\}$}}
    \bigl(S^1 \times [0,1]\bigr)   
  \end{align*}
  for the relators~$r_1$ and $r_2$, respectively. Then
  \[ P := Z_1 \cup_{(z,1) \sim (\overline z,1)} Z_2
  \]
  is a CW-complex such that the canonical maps~$Z_1 \longrightarrow P$
  and $Z_2 \longrightarrow P$ induce an isomorphism~$\pi_1(P) \cong
  D(G_1,G_2,r_1,r_2) =: G$.

  Let $\beta_1 \in H_2(Z_1, S^1 \times\{1\};\R)$ and $\beta_2 \in
  H_2(Z_2,S^1 \times\{1\};\R)$ be the relative classes whose
  boundaries are fundamental classes of~$S^1 \times \{1\}$
  (corresponding to the relators~$r_1$ and $r_2$, respectively).
  Moreover, let $\widetilde \alpha \in H_2(P;\R)$ be the class
  obtained by glueing~$\beta_1$ and~$\beta_2$. Then, we define $\alpha
  \in H_2(G;\R)$ as the image of~$\widetilde \alpha$ under the
  classifying map~$P \longrightarrow BG$ (which is induced by the
  canonical maps~$Z_1 \longrightarrow P$ and $Z_2 \longrightarrow P$). 
\end{setup}

\begin{rmk}[integrality of the canonical class]\label{rem:decomprel1integral}
  In the situation of Setup~\ref{set:decomprel1}, the canonical
  homology class~$\alpha$ is integral: It suffices to show that
  $\widetilde \alpha \in H_2(P;\R)$ is integral. 
  Comparing the long exact sequences of~$(Z_1,\partial Z_1)$
  with $\Z$- and $\R$-co\-efficients shows that $\beta_1 \in H_2(Z_1,\partial Z_1;\R)$
  is an integral class. Analogously, $\beta_2$ is integral. Thus,
  also the glued class~$\widetilde \alpha$ is integral.
\end{rmk}

\begin{setup}[decomposable relators~II]\label{set:decomprel2}
  Let $G_1$ be a group with~$H_2(G_1;\R) \cong 0$ and let $r_1, r_2 \in G_1'$
  be elements of infinite order.
  We then consider the group
  \[ T(G_1,r_1,r_2) := \bigl(G_1 * \langle t \rangle\bigr) / \langle r_1 \cdot t \cdot r_2 \cdot t^{-1}\rangle^{\triangleleft}, 
  \]
  where $t$ is a fresh generator of~$\langle t \rangle \cong \Z$.

  Also here, there is a canonical homology class~$\alpha
  \in H_2(T(G_1,r_1,r_2);\R)$, which is defined as follows: 
  Let $X_1$ be a model of~$BG_1$ and let
  \[ Z := X_1 \cup_{r_1, r_2} \bigl( S^1 \times [0,1] \sqcup S^1 \times [0,1]\bigr)
  \]
  be the cylinder space associated with~$r_1$ and~$r_2$. Let $\beta
  \in H_2(Z,\partial Z;\R)$ be the relative ``fundamental'' class as
  in Corollary~\ref{cor:cylrelgen}.  Glueing the two cylindrical ends
  of~$Z$ by an orientation reversing homeomorphism leads to a
  CW-complex~$P$ such that~$\pi_1(P) \cong T(G_1,r_1,r_2) =: G$ in the
  obvious way (the additional generator~$t$ corresponds to the
  loop~$\{1\} \times ([0,1] \sqcup_{s \sim s} [0,1])$ in the looped
  cylinder.  Let $\widetilde \alpha \in H_2(P;\R)$ be the class
  obtained by glueing~$\beta$ to itself via the cylinder. Then
  we define~$\alpha \in H_2(G;\R)$ as the image of~$\widetilde \alpha$
  under the classifying map~$P \longrightarrow BG$ (which is induced
  by the canonical map~$X_1 \longrightarrow P$ and the cylinder loop).
\end{setup}

\begin{thm}[decomposable relators]\label{thm:decomprelgen}
  Let $G_1$ be a group with~$H_2(G_1;\R) \cong 0$ and let $r_1 \in G_1'$
  be an element of infinite order.
  \begin{enumerate}
  \item \label{item:setup1 relators}
    Let $G_2$ also be a group with~$H_2(G_2;\R) \cong 0$, let $r_2 \in G_2'$
    be an element of infinite order, and let $\alpha \in H_2(D(G_1,G_2,r_1,r_2);\R)$
    be the canonical homology class (Setup~\ref{set:decomprel1}). Then
    \[ \|\alpha\|_1 = 4 \cdot (\scl_{G_1} r_1 + \scl_{G_2} r_2 )
                    = 4 \cdot \Bigl(\scl_{G_1 * G_2} (r_1 \cdot r_2) - \frac12 \Bigr).
    \]
  \item
    Let $r_2 \in G_1'$ be an element of infinite order, let $\alpha
    \in H_2(T(G_1,r_1,r_2);\R)$ be the canonical class (Setup~\ref{set:decomprel2}),
    and let $t$ be the fresh letter in~$T(G_1,r_1,r_2)$. Then
    \[ \|\alpha\|_1 = 4 \cdot \scl_{G_1} (r_1 + r_2)
    = 4 \cdot \Bigl( \scl_{G_1 * \langle t \rangle} (r_1 \cdot t \cdot r_2 \cdot t^{-1})
    - \frac12\Bigr).
    \]
  \end{enumerate}
\end{thm}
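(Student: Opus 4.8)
The plan is to compute $\|\widetilde\alpha\|_1$ on the CW-complex $P$ built in Setup~\ref{set:decomprel1} (respectively Setup~\ref{set:decomprel2}) and then transfer the answer to $H_2(G;\R)$. Since $r_1$ (and $r_2$) has infinite order, the amalgamation (respectively HNN) edge subgroup $\Z$ embeds into the vertex group(s); thus $P$ is realised by a graph of aspherical spaces with $\pi_1$-injective edge maps and is therefore aspherical, so the classifying map $P \longrightarrow BG$ is a homotopy equivalence and hence isometric for the $l^1$-semi-norm (Remark~\ref{rmk:l1functorial}), giving $\|\alpha\|_1 = \|\widetilde\alpha\|_1$. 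By construction $\widetilde\alpha$ is glued out of the relative cylinder classes $\beta_1,\beta_2$ (respectively $\beta$), and Corollary~\ref{cor:cylrelgen} computes $\|\beta_i\|_1 = 4 \cdot \scl_{G_i} r_i$ (respectively $\|\beta\|_1 = 4 \cdot \scl_{G_1}(r_1+r_2)$). So the whole point is the gluing identity $\|\widetilde\alpha\|_1 = \|\beta_1\|_1 + \|\beta_2\|_1$ (respectively $\|\widetilde\alpha\|_1 = \|\beta\|_1$).

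For the inequality ``$\leq$'' I would use Proposition~\ref{prop:relsvfill}, which identifies $\|\beta_i\|_1$ with the stable filling norm $\sfill_{Z_i}(\sigma_i)$ of the boundary circle $\sigma_i \colon \partial\Delta^2 \longrightarrow Z_i$. For $n \in \N$ pick fillings $b_i \in C_2(Z_i;\R)$ of the degree-$n$ fundamental cycles $c_{\sigma_i[n]}$ with $|b_i|_1$ close to $n \cdot \sfill_{Z_i}(\sigma_i)$. In $P$ the two boundary circles are identified to a single circle $C$ with reversed orientations, so $c_{\sigma_1[n]} + c_{\sigma_2[n]}$ is a boundary in $C_\bullet(C;\R)$; as $\pi_1(C) \cong \Z$ is amenable, the uniform boundary condition supplies a filling $e$ of $l^1$-norm bounded independently of $n$. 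Then $\frac 1n(b_1 + b_2 + e)$ is a cycle in $P$ representing $\widetilde\alpha$, and letting $n \to \infty$ yields $\|\widetilde\alpha\|_1 \leq \|\beta_1\|_1 + \|\beta_2\|_1$. In the situation of Setup~\ref{set:decomprel2} the argument is identical, starting from a filling $b$ of $c_{\sigma_1[n]} + c_{\sigma_2[n]}$ in $Z$ that gets self-glued.

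The reverse inequality is the main obstacle: it is the homological analogue of additivity of simplicial volume under gluing along an amenable boundary. I would collapse the gluing circle $C$ to a point, obtaining a map $P \longrightarrow (Z_1/\partial Z_1) \vee (Z_2/\partial Z_2)$ (respectively $P \longrightarrow Z/\partial Z$) that sends $\widetilde\alpha$ to the image of $\beta_1 \oplus \beta_2$ (respectively of $\beta$). By the equivalence theorem (as used in the proof of Proposition~\ref{prop:relsvfill}) the $l^1$-semi-norm of $\beta_i$ in $H_2(Z_i/\partial Z_i;\R)$ equals the relative semi-norm $\|\beta_i\|_1$, since the collapsed circles have amenable fundamental group; combining this with functoriality (Remark~\ref{rmk:l1functorial}) and additivity of the $l^1$-semi-norm over wedge sums (needed only for Setup~\ref{set:decomprel1}) yields $\|\widetilde\alpha\|_1 \geq \|\beta_1\|_1 + \|\beta_2\|_1$. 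Together with the upper bound and Corollary~\ref{cor:cylrelgen} this proves the first equality in each part. The second equality then follows from the standard behaviour of stable commutator length under free products, namely $\scl_{G_1 * G_2}(r_1 \cdot r_2) = \scl_{G_1} r_1 + \scl_{G_2} r_2 + \frac 12$ and $\scl_{G_1 * \langle t \rangle}(r_1 \cdot t \cdot r_2 \cdot t^{-1}) = \scl_{G_1}(r_1 + r_2) + \frac 12$ (using invariance of $\scl$ under conjugation to replace $t r_2 t^{-1}$ by $r_2$, and the retraction $G_1 * \langle t \rangle \longrightarrow G_1$ to identify $\scl_{G_1 * \langle t\rangle}(r_1 + r_2)$ with $\scl_{G_1}(r_1+r_2)$), which I would cite from~\cite{Calegari} (respectively~\cite{Chen}).
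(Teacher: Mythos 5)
Most of your outline matches the paper's strategy: reducing to $\|\widetilde\alpha\|_1$ on~$P$ (the paper does this via the mapping theorem, Corollary~\ref{cor:l1mappingtheorem}, rather than via asphericity of the graph of spaces, but both are fine), computing the cylinder pieces with Corollary~\ref{cor:cylrelgen}, and quoting Calegari's formulas $\scl_{G_1*G_2}(r_1r_2)=\scl_{G_1}r_1+\scl_{G_2}r_2+\frac12$ and $\scl_{G_1*\langle t\rangle}(r_1tr_2t^{-1})=\scl_{G_1}(r_1+r_2)+\frac12$ for the second equality. Your upper bound $\|\widetilde\alpha\|_1\leq\|\beta_1\|_1+\|\beta_2\|_1$, via Proposition~\ref{prop:relsvfill}, explicit fillings of $c_{\sigma_i[n]}$ and a uniform-boundary-condition correction on the glued circle, is a correct and pleasantly concrete replacement for one half of what the paper cites.

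The genuine gap is in the reverse inequality, which is exactly where the paper invokes the amenable glueing theorem of Bucher et al.\ (and notes that its bounded-cohomology ingredients apply beyond manifolds). Your argument collapses the glueing circle and rests on two claims: (i) that $H_2(Z_i,\partial Z_i;\R)\to \widetilde H_2(Z_i/\partial Z_i;\R)$ is isometric because $\partial Z_i$ has amenable fundamental group, and (ii) that the $l^1$-semi-norm is additive over the wedge $(Z_1/\partial Z_1)\vee(Z_2/\partial Z_2)$. Neither follows from the equivalence theorem as used in Proposition~\ref{prop:relsvfill}: that theorem produces relative cycles with small boundary and hence only gives the easy, norm-nonincreasing direction $\|\beta_i'\|_1\leq\|\beta_i\|_1$ for the collapse, whereas you need $\|\beta_i'\|_1\geq\|\beta_i\|_1$, i.e.\ the ability to lift efficient cycles from the quotient back to the pair, which requires relative bounded cohomology of pairs with amenable subspaces (the same machinery behind the amenable glueing theorem). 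Likewise, wedge additivity in the direction you need cannot be had from functoriality or retractions: pulling back extremal cocycles along the two retractions and adding them only yields $\|\beta_1'+\beta_2'\|_1\geq\frac12(\|\beta_1'\|_1+\|\beta_2'\|_1)$, and removing the factor~$\frac12$ is again a (pointed) instance of amenable glueing. So as written the hard half of the lower bound is asserted rather than proved; the fix is either to cite the amenable glueing theorem directly for the decomposition of $P$ along the $\pi_1$-injective amenable circle (as the paper does, using that $r_1,r_2$ have infinite order), or to actually develop the relative bounded-cohomology statements (i) and (ii).
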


\begin{proof}
  In both cases, we will use that the stable commutator length and the
  canonical CW-complexes of decomposable relators can be expressed in
  terms of the stable commutator lengths and cylinder complexes of the
  sub-relators.

  \emph{Ad~1.}
  It is known that~\cite[Proposition~2.99]{Calegari}
  \[ \scl_{G_1* G_2} (r_1 \cdot r_2) = \scl_{G_1} r_1 + \scl_{G_2} r_2 + \frac12.
  \]
  We will now show that $\|\alpha\|_1$ equals~$4 \cdot (\scl_{G_1} r_1 + \scl_{G_2} r_2)$. In
  the following, we will use the notation from Setup~\ref{set:decomprel1}.
  By construction, we have
  \[ \alpha = H_2(c;\R) (\widetilde \alpha),
  \]
  where $c \colon P \longrightarrow BD(G_1,G_2,r_1,r_2)$ is the
  classifying map of~$P$. The mapping
  theorem (Corollary~\ref{cor:l1mappingtheorem}) shows that
  $\|\alpha\|_1 = \| \widetilde \alpha\|_1$. 
  Therefore, it suffices to compute~$\|\widetilde \alpha\|_1$. 
  
  Because $r_1$ and $r_2$ have infinite order, the inclusions of~$S^1
  \times \{1\}$ into~$Z_1$ and~$Z_2$, respectively, are
  $\pi_1$-injective. As $\pi_1(S^1 \times \{1\}) \cong \Z$ is
  amenable, the amenable glueing
  theorem~\cite[Section~6]{bbfipp}
  shows that
  \[ \| \widetilde \alpha\|_1 = \|\beta_1\|_1 + \|\beta_2\|_1;
  \]
  the proofs of Bucher et al.\ carry over from the manifold
  case to this setting, because they established the necessary
  tools in bounded cohomology in this full generality. 
  Moreover, we know that (Corollary~\ref{cor:cylrelgen})
  \[ \|\beta_1\|_1 = 4 \cdot \scl_{G_1} r_1
     \qand
     \|\beta_2\|_1 = 4 \cdot \scl_{G_2} r_2.
  \]
  Putting it all together, we obtain
  $\|\alpha\|_1 = \|\widetilde \alpha\|_1 = 4 \cdot (\scl_{G_1}r_1 + \scl_{G_2} r_2)
  $,  
  as claimed.
  
  \emph{Ad~2.}
  We argue in a similar way as in the first part: 
  In this situation, it is known that~\cite[Theorem~2.101]{Calegari}
  \[ \scl_{G_1 * \langle t \rangle}(r_1 \cdot t \cdot r_2 \cdot t^{-1})
     = \scl_{G_1}(r_1 + r_2) + \frac12.
  \]
  We will now use the notation from Setup~\ref{set:decomprel2}. 
  The 
  classifying map~$c \colon P \longrightarrow BT(G_1,r_1,r_2)$ maps~$\widetilde
  \alpha$ to the canonical class~$\alpha \in H_2(G;\R)$ and the
  mapping theorem (Corollary~\ref{cor:l1mappingtheorem}) shows that $\| \alpha\|_1 =
  \|\widetilde \alpha\|_1$.
  
  Because $r_1$ and $r_2$ have infinite order, we can again use the amenable
  glueing theorem to deduce that
  \[ \|\widetilde \alpha\|_1 = \|\beta\|_1.
  \]
  Moreover, Corollary~\ref{cor:cylrelgen} shows that
  \[ \|\beta\|_1 = 4 \cdot \scl_{G_1} (r_1 + r_2).
  \]
  Therefore, we obtain~$\|\alpha\|_1 = \|\widetilde \alpha_1\|_1 = 4
  \cdot \scl_{G_1}(r_1 + r_2)$.
\end{proof}

In the case that $G_1$ and $G_2$ are free groups, statements of this
type are also contained in arguments of
Calegari~\cite[p.~2004]{calegari_surffromhom}. 

Unfortunately, there does not seem to be an easy way to remove the
condition~$H_2(G_1;\R) \cong 0$ in Theorem~\ref{thm:decomprelgen}:
Without this condition, we have too much ambiguity in the proof of
Proposition~\ref{prop:relsvfill} to ensure integrality and norm control
simultaneously. 

\subsection{Simplicial volume as filling invariant}\label{subsec:svfill}

We mention that also simplicial volume
of higher-dimensional manifolds admits a description as a filling
invariant (this result will not be used in the rest of the paper):

\begin{thm}[simplicial volume as filling invariant]\label{thm:fillsv}
  Let $d \in \N_{\geq 2}$, let $M$ be an orientable closed connected $d$-manifold, let $\tau
  \colon \Delta^d \longrightarrow M$ be an embedding of the standard
  $d$-simplex into~$M$ (i.e., $\tau$ is a homeomorphism onto its image),
  let $\Delta := \tau(\Delta^d) \subset M$, and
  let $\sigma := \tau|_{\partial \Delta^d} \colon \partial \Delta^d
  \longrightarrow M$.
  \begin{enumerate}
  \item Then $\| M \setminus \Delta^\circ, \partial \Delta\| = \sfill_{M \setminus \Delta^\circ} \sigma$.
  \item If $d \geq 3$, then
    $\|M\| = \sfill_{M \setminus \Delta^\circ} \sigma$.
  \item If $d = 2$ and $M \not\simeq S^2$, then
    $\|M\| = \sfill_{M \setminus \Delta^\circ} \sigma - 2$. 
  \end{enumerate}
\end{thm}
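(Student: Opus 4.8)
The plan is to write $N := M \setminus \Delta^\circ$ and $\partial N := \partial \Delta = \tau(\partial \Delta^d)$. By invariance of domain $\Delta^\circ$ is open in $M$, so $N$ is compact; assuming (as we may, since this holds for all embeddings arising in our constructions) that $\tau$ is locally flat, so that it extends to a collar, $N$ is a compact $d$-manifold with boundary $\partial N$, the map $\sigma = \tau|_{\partial \Delta^d}$ is a homeomorphism $\partial \Delta^d \cong \partial N$, and $M \cong N \cup_{\partial N} D^d$. All three assertions will be deduced by applying Proposition~\ref{prop:relsvfill} to $Z = N$ with $m = 1$ and boundary inclusion $\sigma$, and then comparing $\|N, \partial N\|$ with $\|M\|$.

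For assertion~(1) I would first check the hypotheses of Proposition~\ref{prop:relsvfill}: $\partial N$ has one component, homeomorphic to $\partial \Delta^d$; the inclusion $\sigma\colon \partial \Delta^d \to N$ is $\pi_1$-injective (automatic for $d \ge 3$; and for $d = 2$ with $M \not\simeq S^2$ one has $N \cong \Sigma_{g,1}$, $g \ge 1$, so $\pi_1(N)$ is free of rank $2g$ and $\sigma$ carries a generator to the genus-$g$ surface relator, an element of infinite order); and the connecting homomorphism $\partial\colon H_d(N, \partial N;\R) \to H_{d-1}(\partial N;\R) \cong \R$ is an isomorphism, being injective since $H_d(N;\R) = 0$ (as $\partial N \neq \emptyset$) and surjective since $[\partial N]_\R = \partial [N, \partial N]_\R$ generates the target. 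Proposition~\ref{prop:relsvfill}(2) then yields $\beta = \pm [N, \partial N]_\R$ and $\|N, \partial N\| = \|\beta\|_1 = \sfill_N \sigma$; this is~(1) except when $d = 2$ and $M \simeq S^2$. In that remaining case $N \cong D^2$ (and Proposition~\ref{prop:relsvfill} does not apply, since $\sigma$ is null-homotopic), but one checks directly that both sides vanish: $\|D^2, S^1\| = 0$ by the relative duality principle, since $H^2_b(D^2, S^1;\R) = 0$ (because $H^1_b(S^1;\R) = 0$ and $H^2_b(D^2;\R) = 0$), while coning $c_{\sigma[n]}$ to an interior point of the convex set $D^2$ gives $b_n \in C_2(D^2;\R)$ with $\partial b_n = c_{\sigma[n]}$ and $|b_n|_1 = |c_{\sigma[n]}|_1 = 3$ for all $n$, so $\sfill_{D^2}\sigma = \lim_{n\to\infty} 3/n = 0$.

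Assertions~(2) and~(3) then reduce, by~(1), to comparing $\|N, \partial N\|$ with $\|M\|$. First, $\|M\| \le \|N, \partial N\| = \sfill_N\sigma$ holds for every $d \ge 2$: given a filling $b \in C_d(N;\R)$ of $c_{\sigma[n]}$, coning $c_{\sigma[n]}$ to an interior point of the ball $\Delta$ yields $b' \in C_d(\Delta;\R)$ with $\partial b' = c_{\sigma[n]}$ and $|b'|_1 \le d+1$; then $\tfrac1n(b - b')$ is a cycle in $C_d(M;\R)$, and since $\tfrac1n b$ and $\tfrac1n b'$ are relative fundamental cycles of $(N, \partial N)$ and of $(\Delta, \partial \Delta)$ with matching boundary, a Mayer--Vietoris argument for $M = N \cup_{\partial N}\Delta$ shows that it represents $\pm [M]_\R$; hence $\|M\| \le \tfrac1n |b|_1 + \tfrac{d+1}{n}$, and taking the infimum over $b$ and letting $n \to \infty$ gives the inequality. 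For $d \ge 3$ the reverse inequality holds too, which is~(2): here $\partial N \cong S^{d-1}$ is simply connected, so $\pi_1(N) \cong \pi_1(M)$ by van Kampen, and --- $\partial N$ having amenable fundamental group --- the relative version of the duality principle (Proposition~\ref{prop:duality simvol bc}) together with the isometric identifications
\[
  H^d_b(N, \partial N;\R) \;\cong\; H^d_b(N;\R) \;\cong\; H^d_b(\pi_1 N;\R) \;=\; H^d_b(\pi_1 M;\R) \;\cong\; H^d_b(M;\R),
\]
which are compatible with evaluation against $[N, \partial N]_\R$ and $[M]_\R$ because $\Delta$ and $\partial N$ are boundedly acyclic in the relevant degrees, give $\|N, \partial N\| = \|M\|$. (This is the standard fact that simplicial volume is unchanged by removal of an open ball in dimension $\ge 3$.) For $d = 2$ and $M \not\simeq S^2$ one instead uses the explicit surface computations: $M \cong \Sigma_g$ with $g \ge 1$, $N \cong \Sigma_{g,1}$ with $\chi(N) = \chi(M) - 1 = 1 - 2g < 0$, so $\|M\| = \|\Sigma_g\| = 2|\chi(M)| = 4g-4$ (Example~\ref{exa:gap2}) while $\|N, \partial N\| = 2|\chi(N)| = 4g - 2$ by the classical formula for the relative simplicial volume of a surface with boundary --- equivalently, $\|N, \partial N\| = 4\scl_{F}\bigl(\textstyle\prod_i [a_i,b_i]\bigr) = 4(g - \tfrac12)$ by Corollary~\ref{cor:cylrelfree} and the known scl of the genus-$g$ relator in the free group $F$ of rank $2g$ --- whence $\|M\| = \|N, \partial N\| - 2 = \sfill_N\sigma - 2$, which is~(3).

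The only non-formal steps are the $\pi_1$-injectivity check for $\sigma$ (which fails precisely for $M \simeq S^2$ in dimension~$2$ and is responsible for the separate, easy treatment of that case) and the passage from $\|N, \partial N\|$ to $\|M\|$. I expect the latter --- in dimension $\ge 3$ --- to be the main obstacle: one must make the isometric identification of relative and absolute bounded cohomology of $(N, \partial N)$ compatible with the respective fundamental classes, which is exactly where the hypothesis $d \ge 3$ enters, through $\pi_1(N) \cong \pi_1(M)$. The breakdown of this compatibility when $\pi_1(N) \not\cong \pi_1(M)$ is what produces the correction term $-2$ in dimension~$2$, as confirmed by the Euler-characteristic (or scl) computation. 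As indicated, this step can be shortcut by citing the known invariance of simplicial volume under removal of an open ball in dimensions $\ge 3$.
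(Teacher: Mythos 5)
Your argument is correct and, for parts~(1) and~(3), it is essentially the paper's proof: part~(1) is the same application of Proposition~\ref{prop:relsvfill} to $Z = M\setminus\Delta^\circ$ (with the same $\pi_1$-injectivity discussion), and part~(3) is the same appeal to the classification of compact surfaces and the value $2\cdot|\chi|$ for the relative simplicial volume of surfaces with boundary. The genuine difference is in part~(2): the paper gets $\|M\| = \|M\setminus\Delta^\circ,\partial\Delta\|$ in one stroke from the amenable glueing theorem, namely $\|M\| = \|M\setminus\Delta^\circ,\partial\Delta\| + \|\Delta^d,\partial\Delta^d\|$ together with $\|\Delta^d,\partial\Delta^d\| = 0$, whereas you prove the inequality $\|M\|\leq\sfill_{M\setminus\Delta^\circ}\sigma$ by an explicit cone-and-glue argument on chains and the reverse inequality via the duality principle plus the isometric identification of relative and absolute bounded cohomology for pairs with amenable (here trivial) boundary fundamental group --- equivalently, by citing invariance of simplicial volume under removal of an open ball in dimension $\geq 3$. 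Your route makes the ``easy'' inequality self-contained at the chain level, but for the ``hard'' inequality it re-proves (or cites in different packaging) exactly the content of the amenable glueing/equivalence machinery that the paper quotes; if written out, the evaluation-compatibility step in your chain of isomorphisms should be phrased via the inclusion of pairs $(M\setminus\Delta^\circ,\partial\Delta)\hookrightarrow(M,\Delta)$ and the fact that the relative fundamental class maps to $[M,\Delta]_\R$, which is routine. Two further remarks: you also settle the case $d=2$, $M\simeq S^2$ of part~(1), which the paper's proof tacitly excludes (both sides vanish, as you verify), and your local-flatness assumption on $\tau$ is a sensible reading of the statement --- the paper is equally silent about wild embeddings.
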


\begin{proof}
  \emph{Ad~1.}
  This is a special case of Proposition~\ref{prop:relsvfill}:
  We consider~$Z := M \setminus \Delta^\circ$. 
  The map~$\sigma \colon \partial \Delta^d
  \longrightarrow M \setminus \Delta^\circ = Z$ is $\pi_1$-injective (if
  $d \geq 3$, then $\pi_1(\partial \Delta^d)$ is trivial; if $d = 2$,
  this holds by the classification of surfaces and the assumption
  $M \not\simeq S^2$). In view of Poincar\'e duality, the
  hypothesis on~$H_d(Z,\partial Z;\R)$ is satisfied. We therefore
  can apply Proposition~\ref{prop:relsvfill} to obtain
  \[ \| Z, \partial Z\|
  = \bigl\| [Z,\partial Z]_\R \bigr\|_1
  = \sfill_Z \sigma.
  \]
  
  \emph{Ad~2.}  In view of the first part, it suffices to show
  that~$\|M\| = \| M \setminus \Delta^\circ, \partial \Delta\|$. 
  The
  amenable glueing theorem for simplicial
  volume~\cite{vbc,bbfipp,frigeriomoraschini}  shows that
  \[ \|M\| = \|M \setminus \Delta^\circ, \partial \Delta\| + \|\Delta^d, \partial \Delta^d\|
  \]
  (because~$d\geq 3$, both inclusions~$\partial \Delta \longrightarrow M \setminus \Delta^\circ$
  and $\partial \Delta \longrightarrow \Delta$ are $\pi_1$-injective).  
  Moreover, $\|\Delta^d, \partial \Delta^d\| = 0$~\cite{vbc}. Hence, 
  we obtain
  $\|M\| = \|M\setminus \Delta^\circ, \partial \Delta\|$.
    
  \emph{Ad~3.}
  Again, 
  in view of the first part, it suffices to show
  that~$\|M\| = \| M \setminus \Delta^\circ, \partial \Delta\| - 2$.
  In this, two-dimensional, case, this equality follows from the
  classification of compact surfaces and the computation
  of the (relative) simplicial volume of compact surfaces in terms of their
  genus~\cite{vbc,thurston}.
\end{proof}

\subsection{Proof of Theorems \ref{theorem:doubling} and \ref{theorem:nogapgroup}}

Theorem \ref{theorem:doubling} is a special case of Theorem \ref{thm:decomprelgen} with the decomposable relators of Setup \ref{set:decomprel1}.
Let $G$ be a group that satisfies $H_2(G;\R) \cong 0$ and let $r \in G'$ be an element of infinite order. Then we define the \emph{double} $D(G,r)$ of $G$ and $r$ by setting
$$
D(G,r) := D(G,G,r,r) = (G_{\text{left}} \star G_{\text{right}})/ \langle r_{\text{left}} \cdot r_{\text{right}}\rangle^\triangleleft,
$$
where $G_{\text{left}}$ and $G_{\text{right}}$ are isomorphic copies of $G$ and $r_{\text{right}} \in G_{\text{right}}$, $r_{\text{left}} \in G_{\text{left}}$ are the elements corresponding to $r \in G$.
Observe that if $G$ is finitely presented, then so is $D(G,r)$. As in Setup~\ref{set:decomprel1} there is a canonical integral class $\alpha \in H_2(D(G,r);\R)$.

\begin{corr}[Theorem \ref{theorem:doubling}]\label{cor:double}
  Let $G$ be a group with~$H_2(G;\R) \cong 0$ and let $r \in G'$ be 
  of infinite order. Then the canonical integral class~$\alpha \in H_2(D(G,r);\R)$
  satisfies
  \[ \| \alpha\|_1 = 8 \cdot \scl_G r.
  \]
\end{corr}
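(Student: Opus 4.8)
The plan is to recognize this corollary as a direct specialization of Theorem~\ref{thm:decomprelgen}\,\eqref{item:setup1 relators}: we apply that theorem with $G_1 = G_{\text{left}}$, $G_2 = G_{\text{right}}$ (both isomorphic to $G$), and with $r_1 = r_{\text{left}}$, $r_2 = r_{\text{right}}$ the two copies of $r$. First I would check that the hypotheses of Theorem~\ref{thm:decomprelgen} are met: by assumption $H_2(G;\R)\cong 0$, hence $H_2(G_1;\R)\cong 0$ and $H_2(G_2;\R)\cong 0$; and since $r$ has infinite order in $G'$, so do $r_1$ and $r_2$ in their respective copies. Thus $D(G,r) = D(G,G,r,r)$ is exactly the glued group of Setup~\ref{set:decomprel1}, and the canonical integral class $\alpha\in H_2(D(G,r);\R)$ is the canonical class provided there (its integrality being Remark~\ref{rem:decomprel1integral}).

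Next I would simply invoke the formula of Theorem~\ref{thm:decomprelgen}\,\eqref{item:setup1 relators}:
\[
  \|\alpha\|_1 = 4\cdot(\scl_{G_1} r_1 + \scl_{G_2} r_2).
\]
Since $G_1$ and $G_2$ are both isomorphic to $G$ via isomorphisms carrying $r_1$ and $r_2$ to $r$, and since $\scl$ is invariant under group isomorphisms, we have $\scl_{G_1} r_1 = \scl_G r = \scl_{G_2} r_2$. Therefore
\[
  \|\alpha\|_1 = 4\cdot(\scl_G r + \scl_G r) = 8\cdot\scl_G r,
\]
which is the claimed identity.

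There is essentially no obstacle here beyond bookkeeping: the entire content has been front-loaded into Theorem~\ref{thm:decomprelgen}, whose proof already combines the relative filling computation (Proposition~\ref{prop:relsvfill}, via Corollary~\ref{cor:cylrelgen}), the amenable glueing theorem, and the additivity formula $\scl_{G_1*G_2}(r_1 r_2) = \scl_{G_1} r_1 + \scl_{G_2} r_2 + \tfrac12$. The only point that warrants an explicit remark is that the finite presentability of $D(G,r)$ follows because amalgamated products of finitely presented groups over a finitely generated (here cyclic) subgroup are finitely presented — but this is not needed for the norm identity itself and is recorded separately in the statement of Theorem~\ref{theorem:doubling}. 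So the proof is a one-line appeal to Theorem~\ref{thm:decomprelgen} together with the symmetry $\scl_{G_1} r_1 = \scl_{G_2} r_2 = \scl_G r$.
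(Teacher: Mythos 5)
Your proposal is correct and matches the paper's own argument exactly: the paper also deduces this corollary as an immediate specialization of Theorem~\ref{thm:decomprelgen}\,(\ref{item:setup1 relators}) to the double $D(G,r)=D(G,G,r,r)$, with the symmetry $\scl_{G_1}r_1=\scl_{G_2}r_2=\scl_G r$ giving the factor $8$. Nothing is missing.
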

\begin{proof}
  This is an immediate corollary of Theorem~\ref{thm:decomprelgen} (\ref{item:setup1 relators}).
\end{proof}

Applying Theorem \ref{theorem:doubling} to the universal central extension $E$ of Thompson's group $T$, we deduce Theorem~\ref{theorem:nogapgroup}:

\begin{corr}[Theorem~\ref{theorem:nogapgroup}] \label{cor:controlled integral 2 classes}
For every $q \in \Q_{\geq 0}$, there is a finitely presented group~$G_q$ and an integral class~$\alpha_q \in H_2(G_q;\R)$ such that $\| \alpha_q \|_1 = q$. In particular, for every $\epsilon > 0$ there is a finitely presented group $G_\epsilon$ and an integral class~$\alpha_\epsilon \in H_2(G_\epsilon;\R)$ such that $0 < \| \alpha_\epsilon \|_1 \leq \epsilon$.
\end{corr}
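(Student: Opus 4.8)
The plan is to feed the $\scl$-spectrum of the universal central extension~$E$ of Thompson's group~$T$ (Proposition~\ref{prop:scl on central extension of thompsons group}) into the doubling construction (Corollary~\ref{cor:double}, i.e.\ Theorem~\ref{theorem:doubling}), which turns a value of stable commutator length into the $l^1$-semi-norm of an integral $2$-class after multiplying by~$8$. First I would dispose of the case $q = 0$: here one may simply take $G_0$ to be the trivial group and $\alpha_0 := 0 \in H_2(G_0;\R)$, which is integral with $\|\alpha_0\|_1 = 0$.

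So assume $q \in \Q_{>0}$ and put $q' := q/8 \in \Q_{>0}$. By Proposition~\ref{prop:scl on central extension of thompsons group} there is an element $e_{q'} \in E$ with $\scl_E e_{q'} = q'$. Since this value is positive, $e_{q'}$ cannot be a torsion element, so it has infinite order; since $H_1(E;\Z) \cong 0$, the group $E$ is perfect and hence $e_{q'} \in E' = [E,E]$. Moreover $E$ is finitely presented, and from $H_2(E;\Z) \cong 0$ (Proposition~\ref{prop:scl on central extension of thompsons group}) together with the universal coefficient theorem we get $H_2(E;\R) \cong 0$. Thus $E$ and $e_{q'}$ meet all hypotheses of Corollary~\ref{cor:double}, which produces a finitely presented group $G_q := D(E,e_{q'})$ and the canonical integral class $\alpha_q := \alpha \in H_2(G_q;\R)$ with
\[ \|\alpha_q\|_1 = 8 \cdot \scl_E e_{q'} = 8 \cdot q' = q. \]

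For the ``in particular'' clause, given $\epsilon > 0$ I would pick a rational number $q$ with $0 < q \leq \epsilon$ (for instance $q = 1/n$ with $n$ large enough) and apply the first part to obtain $G_\epsilon := G_q$ and $\alpha_\epsilon := \alpha_q$ with $0 < \|\alpha_\epsilon\|_1 = q \leq \epsilon$. I do not expect a genuine obstacle here: the substance lies entirely in Proposition~\ref{prop:scl on central extension of thompsons group} and Theorem~\ref{theorem:doubling}, which we may invoke; the only points requiring (routine) care are verifying that $e_{q'}$ lies in the commutator subgroup of $E$ and has infinite order, and upgrading $H_2(E;\Z) \cong 0$ to $H_2(E;\R) \cong 0$.
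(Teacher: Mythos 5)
Your proposal is correct and follows essentially the same route as the paper: the $q=0$ case is handled trivially, and for $q>0$ one takes an element of $E$ with $\scl_E$ equal to $q/8$ (Proposition~\ref{prop:scl on central extension of thompsons group}), notes it has infinite order since its scl is positive, and feeds it into the doubling construction of Corollary~\ref{cor:double}. The small verifications you flag (infinite order, $E$ perfect so the element lies in $E'$, passing from $H_2(E;\Z)\cong 0$ to $H_2(E;\R)\cong 0$) are exactly the routine points the paper also records, so nothing is missing.
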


\begin{proof}
  Let $q \in \Q_{\geq 0}$. For~$q=0$ we can take the zero class of the trivial group 
  (or any integral $2$-class in any finitely presented amenable group).
  
  For $q > 0$, let 
 $r_q \in E$ be an element in the universal central extension $E$ of Thompson's group $T$  with~$\scl_E r_q = q/8$.  
 Proposition~\ref{prop:scl on central extension of thompsons group} asserts that such an element exists, that $E$ is finitely presented and that $H_2(E;\Z)\cong 0$ and hence $H_2(E;\R)\cong0$. 
 As $\scl_E r_q > 0$ the element $r_q \in E$ has infinite order. 
 
 Let $G_q := D(E,r_q)$ be the double and let $\alpha_q \in H_2(G_q;\R)$ be the associated integral $2$-class.
 Theorem~\ref{theorem:doubling} shows that
 $$
 \| \alpha_q \|_1 = 8 \cdot \scl_E r_q = q.
 $$

 We note that one can prove the second part of
 Theorem~\ref{theorem:nogapgroup} also via previously known examples of
 stable commutator length (Example~\ref{exa:hMCG}).
\end{proof}

\section{The $l^1$-semi-norm of products with surfaces: Proof of Theorem \ref{theorem:prodnorm}} \label{sec:l1 norm of prod with surfaces}

Bucher~\cite{bucherprodsurf} computed the simplicial volume of the product of two surfaces
(see Theorem~\ref{thm:bucher_prod_surf}). We will use her techniques to 
generalise this statement to the product of more general
$2$-classes. 
This will allow us to construct integral $4$-classes whose $l^1$-semi-norm can be expressed in terms of the $l^1$-semi-norm of $2$-classes.
Theorem~\ref{theorem:prodnorm} will be a corollary (Corollary \ref{corr:exact 4-classes as product with surface}) of these constructions.

\begin{thm}\label{thm:prodnorm}
  Let $G$ and $\Gamma$ be groups, let $\alpha \in H_2(G;\R)$, and
  $\beta \in H_2(\Gamma;\R)$.  Furthermore, let $\rho \colon \Gamma
  \longrightarrow \Homeo(S^1)$ be a circle action of~$\Gamma$. Assume furthermore that~$\rho^* \Or \in C^2_b(\Gamma;\R)$ is an extremal
  cocycle for~$\beta$; see Section~\ref{subsec:euler class}. Then
  the class~$\alpha \times \beta \in H_4(G \times \Gamma;\R)$
  satisfies
  \[ \| \alpha \times \beta\|_1
     = \frac 32 \cdot \|\alpha \|_1 \cdot \| \beta \|_1.
  \]
\end{thm}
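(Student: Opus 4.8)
The plan is to prove the two inequalities separately. The upper bound $\|\alpha\times\beta\|_1 \le \frac32\|\alpha\|_1\|\beta\|_1$ is immediate: it is exactly Corollary~\ref{cor:l1prod2improvedupperbound} applied to $\alpha$ and $\beta$, and it does not use the circle action at all. So the whole content is the lower bound $\|\alpha\times\beta\|_1 \ge \frac32\|\alpha\|_1\|\beta\|_1$, which I would get from the duality principle (Proposition~\ref{prop:duality simvol bc}) by exhibiting a single bounded $4$-cocycle on $G\times\Gamma$ (working with group cochains, and using the isometric identifications of Section~\ref{subsec:relationship bc groups and space}) that pairs with $\alpha\times\beta$ to $\frac32\|\alpha\|_1\|\beta\|_1$ and has $\|\cdot\|_\infty\le 1$.

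That cocycle will be built from two ingredients. Since the supremum in Proposition~\ref{prop:duality simvol bc} is achieved and alternation (Proposition~\ref{prop:alternating cocycles}) neither increases $\|\cdot\|_\infty$ nor changes the cohomology class, first fix an alternating bounded $2$-cocycle $\phi$ on $G$ with $\|\phi\|_\infty\le 1$ and $\langle[\phi],\alpha\rangle=\|\alpha\|_1$. Secondly, take $\rho^*\Or\in C^2_b(\Gamma;\R)$, which is alternating, has $\|\rho^*\Or\|_\infty\le 1$, and by hypothesis satisfies $\langle[\rho^*\Or],\beta\rangle=\|\beta\|_1$. Form the bounded cohomology cross-product cocycle $\phi\times\rho^*\Or$ on $G\times\Gamma$ and let $\Theta:=\alt^4(\phi\times\rho^*\Or)$ be a fully symmetrized representative of the same class. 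Because $\Theta$ is cohomologous to $\phi\times\rho^*\Or$ and cohomological cross-products are dual to homological ones (with no sign, the total degree being even), $\langle[\Theta],\alpha\times\beta\rangle=\langle[\phi],\alpha\rangle\cdot\langle[\rho^*\Or],\beta\rangle=\|\alpha\|_1\|\beta\|_1$. Hence, once we know $\|\Theta\|_\infty\le\frac23$, the cocycle $\frac32\,\Theta$ has $\|\cdot\|_\infty\le 1$ and feeding it into the duality principle gives $\|\alpha\times\beta\|_1\ge\frac32\|\alpha\|_1\|\beta\|_1$, finishing the proof.

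Everything thus reduces to the norm estimate $\|\alt^4(\phi\times\rho^*\Or)\|_\infty\le\frac23$, and this is the step I expect to be the main obstacle. Evaluating $\alt^4(\phi\times\rho^*\Or)$ on the five vertices $(x_0,y_0),\dots,(x_4,y_4)$ of a singular $4$-simplex and using that $\phi$ and $\rho^*\Or$ are alternating, this quantity becomes a normalised signed sum, over the ways to split $\{0,\dots,4\}$ into an unordered pair $A=\{a_1,a_2\}$, a middle index $m$, and an unordered pair $B=\{b_1,b_2\}$, of the products $\pm\,\phi(x_{a_1},x_{a_2},x_m)\cdot\Or(y_m,y_{b_1},y_{b_2})$. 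Bucher's computation~\cite{bucherprodsurf} (underlying Theorem~\ref{thm:bucher_prod_surf}) treats exactly this expression in the special case where $\phi$ is itself the pullback of an orientation cocycle, and shows its absolute value is at most $\frac23$. What must be checked here is that the bound $\frac23$ survives when $\phi$ is only assumed to be an alternating bounded cocycle with $\|\phi\|_\infty\le 1$: the naive pointwise bound only gives $1$, so genuine cancellation is needed, and it has to come from the cocycle identity $\delta\phi=0$ together with the constraints that the circular-order function $\Or$ satisfies on any five points of $S^1$. Concretely, I would fix the cyclic order of $y_0,\dots,y_4$ (finitely many cases), observe that the expression is then a linear functional of the finitely many values of $\phi$ on the $2$-faces spanned by the five vertices, and maximise it subject to $|\phi|\le 1$ and the five cocycle relations coming from the $4$-subsets; the claim is that this linear program again has value $\frac23$, with the cocycle relations on $\phi$ playing the role that Bucher's second orientation cocycle played. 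Isolating and proving this combinatorial lemma is the heart of the matter.

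A few points remain routine. One should work throughout with fixed models of $BG$, $B\Gamma$ and $B(G\times\Gamma)\simeq BG\times B\Gamma$ (or, equivalently, with group cochains), using the behaviour of cross-products recalled in Sections~\ref{subsec:relationship bc groups and space} and~\ref{subsec:simvol of prod}; the degenerate cases $\|\alpha\|_1=0$ or $\|\beta\|_1=0$ are already covered by the upper bound; and the constant $\frac32$ is sharp precisely because the two inequalities match.
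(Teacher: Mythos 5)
Your strategy is exactly the one the paper uses: the upper bound is Corollary~\ref{cor:l1prod2improvedupperbound}, and the lower bound comes from the duality principle (Proposition~\ref{prop:duality simvol bc}) applied to~$\tfrac32\cdot\Theta$, where $\Theta=\alt^4(\omega\times\rho^*\Or)$ for an alternating extremal cocycle~$\omega$ for~$\alpha$; the pairing identity $\langle[\Theta],\alpha\times\beta\rangle=\|\alpha\|_1\cdot\|\beta\|_1$ and the reduction of everything to the single estimate $\|\Theta\|_\infty\le\tfrac23$ are precisely the skeleton of the paper's proof. The problem is that, as you say yourself, you do not prove that estimate: you only assert that the resulting optimisation problem ``again has value~$\tfrac23$''. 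Since all the other steps are soft, this unproved combinatorial bound is the entire content of the theorem, so as written the proposal has a genuine gap at the decisive step.

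The lemma you need is true, and your proposed method (alternation, the cocycle identity $\delta^2\omega=0$, and a case analysis over the configuration of the five points $x_i=\rho(\gamma_i).\xi$ on the circle) is exactly how the paper, following Bucher, completes the argument; so the gap is one of execution rather than of conception, but the execution is where the work lies. Concretely, one writes $\Theta=\tfrac1{30}\sum_{k=0}^4 A(k)$, where $A(k)$ is the six-term block of summands attached to the cyclic shift $\tau=(0\,1\,2\,3\,4)^k$. In the generic case (all $x_i$ distinct and cyclically ordered) every $\Or$-factor equals~$1$, and the relation $\delta^2\omega(g_{\tau(0)},g_{\tau(2)},g_{\tau(3)},g_{\tau(4)})=0$ collapses each $A(k)$ from six terms to four, so $|A(k)|\le 4$ and $|\Theta|\le\tfrac{20}{30}=\tfrac23$; note the naive bound per block would be~$6$, so this is where the cancellation you anticipated actually enters. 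When some of the $x_i$ coincide, many $\Or$-factors vanish outright, and one gets bounds $\tfrac{19}{30}$, $\tfrac{36}{120}$, $\tfrac{48}{120}$, or~$0$, using one further application of the cocycle identity in one subcase and otherwise just counting the permutations whose $\Or$-factor survives. This last point is a caveat for your linear-programming formulation: in the degenerate configurations the bound does not come only from $|\omega|\le 1$ plus the cocycle constraints, but also from the vanishing pattern of $\Or$, so the case distinction over configurations of the $x_i$ cannot be avoided and must be built into the optimisation.
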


Theorem~\ref{thm:prodnorm} is a strict
generalisation of Bucher's result~\cite{bucherprodsurf} and our proof follows the outline of Bucher's work. Recall that for $g \geq 2$ we denote the oriented closed connected
surface of genus~$g$ by~$\Sigma_g$, its fundamental group by~$\Gamma_g$, 
and its fundamental class by~$[\Sigma_g]_\R \in H_2(\Sigma_g;\R) \cong H_2(\Gamma_g;\R)$.
Fix a hyperbolic structure on~$\Sigma_g$ and let $\rho \col \Gamma_g \to \Homeo(S^1)$ be the corresponding action on the boundary $\partial \Gamma_g \cong S^1$.
 The cocycle~$\rho^*\Or \in C^2_b(\Gamma_g;\R)$ is extremal to~$[\Sigma_g]_\R$ (see Section~\ref{subsec:euler class}) and satisfies
 $$
 \bigl\langle [\rho^* \Or], [\Sigma_g]_\R \bigr\rangle = \| \Sigma_g \| = 4 \cdot g -4.
 $$
 Therefore, we obtain the following immediate corollary to Theorem \ref{thm:prodnorm}:
 \begin{corr}[Theorem \ref{theorem:prodnorm}] \label{corr:exact 4-classes as product with surface}\label{cor:prodnormsurf}
   Let $g \geq 2$, 
   let $G$ be a group, let $\alpha \in H_2(G;\R)$ and let $\Gamma_g$ and $[\Sigma_g]_\R \in H_2(\Gamma_g;\R)$ be as above.
   Then the class~$\alpha \times [\Sigma_g]_\R \in H_4(G \times \Gamma_g;\R)$ satisfies
   $$
   \bigl\| \alpha \times [\Sigma_g]_\R \bigr\|_1 = \frac{3}{2} \cdot \| \alpha \|_1 \cdot \| \Sigma_g \| =  6 \cdot (g - 1) \cdot \| \alpha \|_1.
   $$
\end{corr}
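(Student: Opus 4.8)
The corollary is the special case of Theorem~\ref{thm:prodnorm} in which the second factor is a hyperbolic surface group equipped with its action on the boundary circle, so the plan is to reduce to that theorem and then to indicate how the theorem itself is proved, since that is where the work lies. First I would fix a hyperbolic structure on~$\Sigma_g$ and let $\rho\colon\Gamma_g\to\Homeo(S^1)$ be the induced action on $\partial\Gamma_g\cong S^1$. By Example~\ref{exmp:Euler cocycle is extremal} the cocycle $\rho^*\Or\in C^2_b(\Gamma_g;\R)$ is extremal for $[\Sigma_g]_\R$, and $\|[\Sigma_g]_\R\|_1=\|\Sigma_g\|=4g-4$. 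Hence Theorem~\ref{thm:prodnorm}, applied with $\Gamma:=\Gamma_g$ and $\beta:=[\Sigma_g]_\R$, gives
\[
  \bigl\|\alpha\times[\Sigma_g]_\R\bigr\|_1
  =\tfrac32\cdot\|\alpha\|_1\cdot\|[\Sigma_g]_\R\|_1
  =\tfrac32\cdot\|\alpha\|_1\cdot(4g-4)
  =6\cdot(g-1)\cdot\|\alpha\|_1,
\]
which is the assertion.

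For Theorem~\ref{thm:prodnorm} itself, the upper bound $\|\alpha\times\beta\|_1\le\frac32\|\alpha\|_1\|\beta\|_1$ is already available: it is Corollary~\ref{cor:l1prod2improvedupperbound}, valid for arbitrary pairs of $2$-classes and deduced from Bucher's Theorem~\ref{thm:bucher_prod_surf} via surface presentations and functoriality. So the content is the matching lower bound, and for this I would invoke the duality principle (Proposition~\ref{prop:duality simvol bc}): it suffices to exhibit a bounded cocycle $\Theta\in C^4_b(G\times\Gamma;\R)$ with $\|\Theta\|_\infty\le1$ and $\langle[\Theta],\alpha\times\beta\rangle=\frac32\|\alpha\|_1\|\beta\|_1$ (if $\|\alpha\|_1=0$ or $\|\beta\|_1=0$ there is nothing to prove). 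Choose an extremal cocycle $c\in C^2_b(G;\R)$ for $\alpha$, so that $\|c\|_\infty=1$ and $\langle[c],\alpha\rangle=\|\alpha\|_1$; replacing $c$ by $\alt^2(c)$ (Proposition~\ref{prop:alternating cocycles}) we may take $c$ alternating, and $\rho^*\Or$ is alternating because $\Or$ is. Set $\Theta:=\tfrac32\cdot\alt^4(c\times\rho^*\Or)$, where $c\times\rho^*\Or$ denotes the cohomological cross-product cocycle. Alternation does not change the bounded cohomology class, and the cohomological cross-product is dual to the homological one, so $\langle[\Theta],\alpha\times\beta\rangle=\frac32\langle[c],\alpha\rangle\cdot\langle[\rho^*\Or],\beta\rangle=\frac32\|\alpha\|_1\|\beta\|_1$. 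Thus everything reduces to the norm bound
\[
  \bigl\|\alt^4(c\times\rho^*\Or)\bigr\|_\infty\le\tfrac23.
\]

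This last estimate is the heart of the argument and the step I expect to be the main obstacle; it is the generalization of Bucher's computation $\|[\Or]\smile[\Or]\|_\infty\le\frac23$ from the case in which the first factor is itself pulled back from the orientation cocycle of a circle action to the case of an arbitrary alternating $2$-cocycle $c$ of norm $\le1$. Concretely, I would evaluate $\alt^4(c\times\rho^*\Or)$ on a tuple $(h_0,\dots,h_4)\in(G\times\Gamma)^5$, write the alternation as the average over $S_5$ of the signed products $\sign(\sigma)\cdot c(\bar h_{\sigma0},\bar h_{\sigma1},\bar h_{\sigma2})\cdot\rho^*\Or(\tilde h_{\sigma2},\tilde h_{\sigma3},\tilde h_{\sigma4})$ (where $h_i=(\bar h_i,\tilde h_i)$), use the alternating property of $c$ and of $\Or$ to collapse the $120$ summands into the $10$ blocks indexed by the $3$-element subset of $\{0,\dots,4\}$ feeding the $\Or$-factor, and then bound the resulting expression by a finite case analysis over the possible circular orders imposed by $\rho$ on $\tilde h_0,\dots,\tilde h_4$, the cocycle relation for $c$ producing the decisive cancellation; the finitely many non-generic configurations of the $\Gamma$-coordinates are handled separately and only lower the value. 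Granting this combinatorial lemma, the lower bound follows from duality as above, completing the proof of Theorem~\ref{thm:prodnorm} and hence of the corollary.
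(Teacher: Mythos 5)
Your proposal is correct and follows essentially the same route as the paper: the corollary is deduced from Theorem~\ref{thm:prodnorm} exactly as in the paper, using that $\rho^*\Or$ is extremal for $[\Sigma_g]_\R$ with $\langle[\rho^*\Or],[\Sigma_g]_\R\rangle=4g-4$, and your sketch of the theorem itself (upper bound from Corollary~\ref{cor:l1prod2improvedupperbound}, lower bound by duality applied to $\tfrac32\cdot\alt^4(\omega\times\rho^*\Or)$ with the key estimate $\bigl\|\alt^4(\omega\times\rho^*\Or)\bigr\|_\infty\le\tfrac23$) is precisely the paper's Bucher-style argument. The only step you leave as a sketch is the case-by-case verification of the $\tfrac23$ bound, which the paper carries out in detail via the decomposition $\sigma=(0\,1\,2\,3\,4)^k\circ\upsilon$ and a case analysis on coincidences among the circle points, exactly along the lines you describe.
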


\begin{proof}[Proof of Theorem \ref{thm:prodnorm}]
In this proof, all cocycles will be given in the homogeneous resolution.
The upper bound holds for all classes in degree~$2$
(Corollary~\ref{cor:l1prod2improvedupperbound}).  For the lower bound
we will use duality (Proposition~\ref{prop:duality simvol bc}):

Let $\orb \R \Gamma := [\rho^*\Or]^\R = -2 \cdot \eub \R\Gamma \in H^2_b(\Gamma; \R)$ be the orientation class for the given
action~$\rho \col \Gamma \to \Homeo(S^1)$. 
 Moreover, let $\omega \in C^2_b(G;\R)$ be an extremal cocycle for~$\alpha \in H_2(G;\R)$ in the homogeneous resolution; see Proposition \ref{prop:duality simvol bc}.
By possibly replacing~$\omega$ by~$\alt^2_b(\omega)$, we may assume that $\omega$ is alternating; see Section~\ref{subsec:alternating}. 
By assumption, $\| \omega \|_\infty \leq 1$ and 
$$
\bigl\langle [\omega], \alpha \bigr\rangle = \| \alpha \|_1 \qand \langle \orb \R \Gamma, \beta \rangle = \| \beta \|_1.
$$
The cross-product  $ \omega \times \rho^*\Or \in C^4_b(G \times \Gamma; \R)$ of $\omega$ and $\rho^*\Or$ is defined via
$$
\omega \times \rho^*\Or \col \bigl((g_1, \gamma_1), \dots, (g_5,\gamma_5)\bigr)
\longmapsto \omega(g_1,g_2,g_3) \cdot \rho^*\Or{}{}(\gamma_3,\gamma_4,\gamma_5)
$$
and satisfies 
$$
\bigl\langle  [\omega \times \rho^*\Or{}{}], \alpha \times \beta \bigr\rangle
= \| \alpha \|_1 \cdot \|\beta \|_1.
$$
This recovers the estimate $\| \alpha \|_1 \cdot \| \beta \|_1 \leq \| \alpha \times \beta \|_1$ as seen in Proposition~\ref{prop:l1productgeneric}.

\begin{claim}  \label{claim:norm of theta 2/3}
Let $\Theta := \alt^4_b(\omega \times \rho^*\Or)$ 
be the associated alternating cocycle of~$\omega \times \rho^*\Or$; see Section~\ref{subsec:alternating}. Then 
$\| \Theta \|_\infty \leq 2/3$.
\end{claim}

Once Claim~\ref{claim:norm of theta 2/3} is established, we can argue as follows: 
Recall that $\Theta$ and $\omega \times \rho^*\Or$ represent the same class in $H^4_b(G \times \Gamma;\R)$ by Proposition \ref{prop:alternating cocycles}.
Hence, $\langle \Theta, \alpha \times \beta \rangle = \| \alpha \|_1 \cdot \| \beta \|_1$.
Moreover by the claim we have that $\| \frac{3}{2} \cdot \Theta \|_\infty \leq 1$ and by duality we conclude that
$\frac{3}{2} \cdot \| \alpha \|_1 \cdot \| \beta \|_1 \leq \| \alpha \times \beta \|_1$.
Putting both estimates together, we will obtain
$$\frac{3}{2} \cdot \| \alpha \|_1 \cdot \| \beta \|_1 = \| \alpha \times \beta \|_1,
$$ as claimed in Theorem \ref{thm:prodnorm}.
To complete the proof, it thus only remains to show Claim~\ref{claim:norm of theta 2/3}.
\end{proof}

\begin{proof}[Proof of Claim \ref{claim:norm of theta 2/3}]
  We will follow the outline of Bucher's proof~\cite[Proposition~7]{bucherprodsurf},
  quoting parts of the proof verbatim. 
Let $g_0, \ldots, g_4 \in G$ and $\gamma_0, \ldots, \gamma_4 \in \Gamma$.
Moreover, let $\xi \in S^1$ be a point to define~$\Or$ and set $x_i := \rho(\gamma_i).\xi$
for all~$i \in \{0, \ldots, 4 \}$.
We will give upper bounds to $|\Theta((g_0, \gamma_0), \ldots, (g_4, \gamma_4))|$ in different cases.
By construction,
$$
\Theta\bigl((g_0,\gamma_0), \ldots, (g_4,\gamma_4)\bigr)
$$ 
may be written as
\begin{eqnarray} \label{equn:Theta definition}
\frac{1}{| S_5 |} \cdot \sum_{\sigma \in S_5} \sign(\sigma) \cdot \omega(g_{\sigma(0)}, g_{\sigma(1)}, g_{\sigma(2)}) \cdot \Or(x_{\sigma(2)},x_{\sigma(3)}
,x_{\sigma(4)} ),
\end{eqnarray}
where $\Or$ is the orientation; see Section~\ref{subsec:euler class}.
Every permutation~$\sigma \in S_5$ may be written uniquely as $\sigma = (\mbox{0 1 2 3 4})^k \circ \upsilon$, where $k \in \{0,\ldots,4 \}$  and $\upsilon \in S_5$ is a permutation with~$\upsilon(2)=0$.
Using that both $\Or$ and $\omega$ are alternating we obtain that 
\begin{eqnarray} \label{equn:theta with a(k)}
\Theta\bigl((g_0,\gamma_0), \ldots, (g_4,\gamma_4)\bigr) = \frac{1}{30} \cdot 
\sum_{k=0}^4 A(k),
\end{eqnarray}
where
\begin{align*}
  A(k) = 
  \;& 
  \omega(g_{\tau(0)}, g_{\tau(1)}, g_{\tau(2)})
  \cdot\Or(x_{\tau(0)}, x_{\tau(3)}, x_{\tau(4)}) \\
+ \;&  
\omega(g_{\tau(0)}, g_{\tau(3)}, g_{\tau(4)})
\cdot\Or(x_{\tau(0)}, x_{\tau(1)}, x_{\tau(2)}) \\ 
- \;&
\omega(g_{\tau(0)}, g_{\tau(1)}, g_{\tau(3)})
\cdot\Or(x_{\tau(0)}, x_{\tau(2)}, x_{\tau(4)}) \\ 
- \;&
\omega(g_{\tau(0)}, g_{\tau(2)}, g_{\tau(4)})
\cdot\Or(x_{\tau(0)}, x_{\tau(1)}, x_{\tau(3)}) \\ 
+ \;&
\omega(g_{\tau(0)}, g_{\tau(1)}, g_{\tau(4)})
\cdot\Or(x_{\tau(0)}, x_{\tau(2)}, x_{\tau(3)}) \\ 
+ \;&
\omega(g_{\tau(0)}, g_{\tau(2)}, g_{\tau(3)})
\cdot\Or(x_{\tau(0)}, x_{\tau(1)}, x_{\tau(4)}) 
\end{align*}
for $\tau = (\mbox{0 1 2 3 4})^k$.
Observe also that we may assume that the $x_i$ are cyclically ordered, as $\Theta$ is alternating.

In what follows we will estimate the terms~$A(k)$, depending on the relative position of the~$x_i$: 
\begin{itemize}
\item All $x_0, \ldots, x_4$ are distinct. 
As all $\Or$-terms in $A(k)$ equal~$1$ we have
\begin{align*}
  A(k) = 
  \;& 
\omega(g_{\tau(0)}, g_{\tau(1)}, g_{\tau(2)})+
\omega(g_{\tau(0)}, g_{\tau(3)}, g_{\tau(4)}) \\ 
- \;&
\omega(g_{\tau(0)}, g_{\tau(1)}, g_{\tau(3)})
-
\omega(g_{\tau(0)}, g_{\tau(2)}, g_{\tau(4)}) \\ 
+ \; &
\omega(g_{\tau(0)}, g_{\tau(1)}, g_{\tau(4)})
 +
\omega(g_{\tau(0)}, g_{\tau(2)}, g_{\tau(3)})
 \\
 = 
 \; & 
\omega(g_{\tau(2)}, g_{\tau(3)}, g_{\tau(4)})+
\omega(g_{\tau(0)}, g_{\tau(1)}, g_{\tau(2)}) \\ 
- \; &
\omega(g_{\tau(0)}, g_{\tau(1)}, g_{\tau(3)})
+
\omega(g_{\tau(0)}, g_{\tau(1)}, g_{\tau(4)}) 
\end{align*}
for $\tau = (\mbox{0 1 2 3 4})^k$, where in the last equation we used that
\begin{align*}
0 = & \;\delta^2 \omega(g_{\tau(0)}, g_{\tau(2)}, g_{\tau(3)}, g_{\tau(4)}) \\
= &\; \omega(g_{\tau(2)}, g_{\tau(3)}, g_{\tau(4)}) - \omega(g_{\tau(0)}, g_{\tau(3)}, g_{\tau(4)}) \\
 + &\; \omega(g_{\tau(0)}, g_{\tau(2)}, g_{\tau(4)}) - \omega(g_{\tau(0)}, g_{\tau(2)}, g_{\tau(3)})
\end{align*}
by the cocycle condition.
In particular, we see that $|A(k)| \leq 4$ as $\| \omega \|_\infty \leq 1$.
Hence,
$$
\bigl| \Theta((g_1,\gamma_1), \ldots, (g_5,\gamma_5)) \bigr|
\leq \frac{1}{30} \cdot \sum_{k=0}^4 |A(k)| \leq \frac{20}{30} = \frac{2}{3}.
$$

\item Two of the~$x_i$ are identical and the others are distinct.
Without loss of generality assume that~$x_0=x_1$. 
Observe that in this case $\Or(x_i,x_j,x_k)=0$ whenever two of the~$x_i, x_j, x_k$ are equal to $x_0$ or~$x_1$.
We will estimate~$|A(k)|$ in different cases:
\begin{itemize}
\item $k=0$: Then $A(0) = 
\omega(g_0, g_1, g_2) -
\omega(g_{0}, g_{1}, g_{3}) +
\omega(g_{0}, g_{1}, g_{4})$
and hence $|A(0)|\leq 3$.

\item $k=1$: Then $A(1) =  
\omega(g_1, g_4, g_0) 
-\omega(g_1, g_3, g_0) 
+\omega(g_1, g_2, g_0)$
and hence $|A(1)|\leq 3$.

\item $k=2$: Then
\begin{align*}
A(2) = &\; 
\omega(g_2, g_0, g_1) 
-\omega(g_{2}, g_{3}, g_{0})
-\omega(g_{2}, g_{4}, g_{1})
\\
+&\; \omega(g_{2}, g_{3}, g_{1})
+\omega(g_{2}, g_{4}, g_{0}).
\end{align*}
By the cocycle condition, it follows that 
\begin{align*}
0 &= \delta^2 \omega(g_2,g_4,g_0,g_1) \\
&= \omega(g_4,g_0,g_1)-\omega(g_2,g_0,g_1)+\omega(g_2,g_4,g_1)-\omega(g_2,g_4,g_0).
\end{align*}
Therefore, $A(2) =  \omega(g_4,g_0,g_1) 
-\omega(g_{2}, g_{3}, g_{0})
+\omega(g_{2}, g_{3}, g_{1})
$
and so $|A(2)|\leq 3$.

\item $k=3$: 
Then
\begin{align*}
A(3) = &\; 
\omega(g_3, g_4, g_0)
+\omega(g_3, g_1, g_2) 
-\omega(g_3, g_4, g_1)
\\
-&\;\omega(g_3, g_0, g_2)
+\omega(g_3, g_0, g_1)
\end{align*}
and hence $|A(3)|\leq 5$.
\item $k=4$: 
Then
\begin{align*}
A(4) = &\;
\omega(g_4, g_0, g_1)
-\omega(g_4, g_0, g_2)
-\omega(g_4, g_1, g_3)
\\
& \;
+\omega(g_4, g_0, g_3)
+\omega(g_4, g_1, g_2)
\end{align*}
and hence $|A(4)| \leq 5$.
\end{itemize}

Putting things together we see that
\begin{eqnarray*}
\bigl|\Theta((g_1,\gamma_1), \ldots, (g_5,\gamma_5))\big| \leq
\frac{1}{30}
\sum_{k=0}^4 |A(k)| \leq \frac{19}{30} < 2/3.
\end{eqnarray*}

\item Three of the $x_i$ are identical, the other ones are different. As $\Theta$ is alternating we may assume that $x_0=x_1=x_2$.
A permutation $\sigma \in S_5$ for which 
the
$\Or(x_{\sigma(2)}, x_{\sigma(3)}, x_{\sigma(4)})$
term in Equation (\ref{equn:Theta definition}) is non-trivial has to map exactly one of the elements in $\{2,3,4 \}$ to one of the elements $\{0,1,2 \}$, and has to map the remaining two elements of $\{2,3,4\}$ to~$\{3,4\}$. We then have two more choices for $\sigma(0)$ and~$\sigma(1)$.
We compute that the total number of such permutations is~$36$. For all other permutations, the $\Or$-term in Equation (\ref{equn:Theta definition}) will vanish.
We may then estimate
$$
\bigl|\Theta((g_0,\gamma_0), \ldots, (g_4,\gamma_4))\bigr|
\leq \frac{36}{5!} = \frac{36}{120} < \frac{2}{3}.
$$

\item Two pairs are identical and one element is different from these pairs. Assume without loss of generality that $x_0=x_1$, $x_2=x_3$. A permutation $\sigma \in S_5$ for which
the $\Or(x_{\sigma(2)}, x_{\sigma(3)}, x_{\sigma(4)})$-term in Equation (\ref{equn:Theta definition}) is non-trivial has to map
each of $\{2,3,4 \}$ to different sets $\{0,1 \}$, $\{ 2,3 \}$, and $\{5 \}$. Moreover, there are two choices for the two elements that get mapped to the sets with two elements.
Again, there are two more choices for $\sigma(0)$ and $\sigma(1)$.
We compute that there are a total of $48$ such permutations. For all other permutations the $\Or$-term in Equation (\ref{equn:Theta definition}) will vanish.
We may then estimate
$$
\bigl|\Theta((g_0,\gamma_0), \ldots, (g_4,\gamma_4))\bigr| \leq \frac{48}{5!} = \frac{48}{120}< \frac{2}{3}.
$$

\item If more than three of the $x_i$ are identical or if exactly three of the $x_i$ are identical and the two remaining $x_i$ are identical, then the $\Or$-term in Equation (\ref{equn:Theta definition}) always vanishes and we get that
$$
\bigl|\Theta((g_0,\gamma_0), \ldots, (g_4,\gamma_4))\bigr| = 0 < \frac{2}{3}.
$$
\end{itemize}

In summary, in each case we have seen that 
$$
\bigl| \Theta((g_0,\gamma_0), \ldots, (g_4,\gamma_4)) \bigr| \leq \frac{2}{3}
$$
and hence $\| \Theta \|_\infty \leq \frac{2}{3}$.
This finishes the proof of Claim \ref{claim:norm of theta 2/3}
(and also the proof of Theorem~\ref{thm:prodnorm}).
\end{proof}

\section{Manufacturing manifolds with controlled simplicial volumes}
\label{sec:manufac sim vol}

The computation of $\ell^1$-semi-norms of
$2$-classes in group homology allows us to construct manifolds with controlled simplicial volume. 

This construction will involve a normed version of Thom's realisation theorem, which we recall in Section~\ref{subsec:thom realisation}.
Theorem~\ref{theorem:nogap} is proven in Section~\ref{subsec:proofA} and the theorems for dimension~$4$ (Theorems~\ref{theorem:simvolQ} and~\ref{theorem:exact4mfd}) are proven in Section~\ref{subsec:proofB}.
Finally, in Section~\ref{subsec:related problems} we discuss related problems and further research topics.

\subsection{Thom's realisation theorem}\label{subsec:thom realisation}

In order to turn classes in group homology into manifolds with controlled
simplicial volume, we will use the following \emph{normed} version of Thom's
realisation theorem:

\begin{thm}[normed Thom realisation]\label{thm:thom}
  For each~$d \in \N_{\geq 4}$, there exists a constant~$K_d \in \N_{>0}$ with the following property:
  If $G$ is a finitely presented group (with model~$X$ of~$BG$) and $\alpha \in H_d(X;\R)$
  is an integral homology class, then there is an oriented closed connected (smooth) $d$-manifold~$M$,
  a continuous map~$f \colon M \longrightarrow X$ and a number~$m \in \{1,\dots, K_d\}$
  with
  \[ H_d(f ;\R) [M]_\R = m \cdot \alpha
     \qand \|M\| = m \cdot \|\alpha\|_1.
  \]
  Moreover, one can choose~$K_4 = 1$ and $K_5 = 1$.
\end{thm}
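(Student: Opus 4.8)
The plan is to deduce Theorem~\ref{thm:thom} from the classical Thom realisation theorem together with a surgery argument that upgrades the classifying map to a $\pi_1$-isomorphism; once that is done, the norm identity falls out of the mapping theorem for the $l^1$-semi-norm (Corollary~\ref{cor:l1mappingtheorem}).

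\emph{Realising a multiple.} Thom's realisation theorem provides a constant $N(d)\in\N_{>0}$, depending only on $d$, such that for every CW-complex $Y$ the image of the bordism map $\Omega^{SO}_d(Y)\to H_d(Y;\Z)$ contains $N(d)\cdot H_d(Y;\Z)$; moreover $N(d)=1$ whenever $d\le 6$, since every integral homology class of degree at most $6$ is realised by an oriented closed smooth manifold. First I set $K_d:=N(d)$ and $m:=N(d)$, choose an integral lift $\alpha_\Z\in H_d(X;\Z)$ of $\alpha$ (possible because $\alpha$ is integral, and harmless because $\|\cdot\|_1$ depends only on the image in $H_d(X;\R)$), and obtain an oriented closed smooth $d$-manifold $M_0$ with a map $f_0\colon M_0\to X$ such that $H_d(f_0;\R)[M_0]_\R=m\cdot\alpha$. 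After connect-summing the components of $M_0$ and extending $f_0$ over the connecting tubes (possible since $d\ge 3$ and $X$ is aspherical), I may assume $M_0$ connected.

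\emph{Correcting the fundamental group.} Next I would modify $(M_0,f_0)$ so that the classifying map becomes a $\pi_1$-isomorphism. Pick generators $g_1,\dots,g_k$ of $G$, replace $M_0$ by its connected sum $M_1$ with $k$ copies of $S^1\times S^{d-1}$, and extend $f_0$ over the $i$-th new summand by the composite $S^1\times S^{d-1}\to S^1\xrightarrow{\ g_i\ }X$ (well defined up to homotopy since $X$ is aspherical). As this map factors through $S^1$, the class of $S^1\times S^{d-1}$ vanishes in $H_d(X;\R)$, so the resulting map $f_1\colon M_1\to X$ still satisfies $H_d(f_1;\R)[M_1]_\R=m\cdot\alpha$, while $\pi_1(f_1)\colon\pi_1(M_1)\to G$ is now surjective. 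Since $\pi_1(M_1)$ is finitely generated and $G$ is finitely presented, the kernel $N$ of $\pi_1(f_1)$ is the normal closure of a finite set $n_1,\dots,n_\ell$; represent these by disjoint framed embedded circles in the interior of $M_1$ (their normal bundles are trivial because $M_1$ is oriented) and perform surgery along them. Because $d\ge 4$, each surgery glues in a simply connected piece $D^2\times S^{d-2}$, so by van Kampen the resulting oriented closed smooth manifold $M$ has $\pi_1(M)\cong\pi_1(M_1)/N\cong G$. The trace of these surgeries is a bordism $W$ from $M_1$ to $M$; as each handle is attached along a neighbourhood of an element of $N=\ker\pi_1(f_1)$, the map $f_1$ is null-homotopic on that neighbourhood and hence extends over $W$. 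Its restriction $f\colon M\to X$ therefore satisfies $H_d(f;\R)[M]_\R=f_{1*}[M_1]_\R=m\cdot\alpha$, and $\pi_1(f)$ is an isomorphism.

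\emph{Reading off the norm, and the obstacle.} Since $\pi_1(f)$ is an isomorphism, in particular surjective with amenable (trivial) kernel, Corollary~\ref{cor:l1mappingtheorem} shows that $H_d(f;\R)$ is isometric, so
$$
\|M\|=\bigl\|[M]_\R\bigr\|_1=\bigl\|H_d(f;\R)[M]_\R\bigr\|_1=\|m\cdot\alpha\|_1=m\cdot\|\alpha\|_1 ,
$$
with $m\in\{1,\dots,K_d\}$ and $m=1$ when $d\in\{4,5\}$, as claimed. The only delicate point is the middle step: one has to make sure that every connected sum and surgery used to force $\pi_1(f)$ to be an isomorphism can be carried out together with an extension of the classifying map over the corresponding bordism — which is possible precisely because $BG$ is aspherical and the circles along which one operates lie in $\ker\pi_1(f_1)$ — and that this kernel is finitely normally generated, which is exactly the input from the finite presentability of $G$. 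The first and last steps are essentially citations (Thom's theorem and the mapping theorem), the only numerical ingredient being the classical vanishing of oriented bordism in low degrees that gives $K_4=K_5=1$.
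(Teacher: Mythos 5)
Your argument is correct and follows essentially the same route as the paper: classical Thom realisation (with $K_4=K_5=1$ in low degrees), a surgery step making the classifying map a $\pi_1$-isomorphism, and then the mapping theorem for the $l^1$-semi-norm to transfer the norm. The only difference is that you carry out the $\pi_1$-surgery argument explicitly, whereas the paper outsources it to Crowley--L\"oh, so no gap remains.
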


\begin{proof}
  Everything except for the condition on the simplicial volume is contained in
  Thom's classical realisation theorems~\cite[Theorems~III.3,~III.4]{thomtoplib}.
  (Thom's original theorems apply to~$X$ because every singular homology class of~$X$
  is supported on a finite subcomplex; as $G$ is finitely presented,
  we can choose the subcomplex in such a way that the inclusion into~$X$ induces
  a $\pi_1$-isomorphism.)
  One can then apply surgery to obtain a manifold representation of~$\alpha$,
  where $f \colon M \longrightarrow X$ in addition is a $\pi_1$-isomorphism~\cite[(proof of)
  Theorem~3.1]{crowleyloeh} (this will not touch the multiplier~$m$). Therefore,
  the mapping theorem for the $l^1$-semi-norm (Corollary~\ref{cor:l1mappingtheorem})
  shows that
  \[ \| M \| = \bigl\| H_d(f;\R)[M]_\R \bigr\|_1
             = \| m \cdot \alpha \|_1 = m \cdot \| \alpha\|_1.
  \qedhere	     
  \]
\end{proof}

\subsection{No gaps in higher dimensions: Proof of Theorem~\ref{theorem:nogap}}\label{subsec:proofA}

We promote the computations of $l^1$-semi-norms in degree~$2$ to higher dimensions using cross-products. The manifolds will then be provided by the normed Thom realisation (Theorem \ref{thm:thom}).

\begin{proof}[Proof of Theorem~\ref{theorem:nogap}]
  Let $d \in \N_{\geq 4}$. We fix an oriented closed connected
  hyperbolic $(d-2)$-manifold~$N_d$; in particular, $\|N_d\| > 0$.
  Moreover, let $K_d \in \N$ be the constant provided by Thom's
  realisation theorem (Theorem~\ref{thm:thom}).

  Let $\varepsilon \in \R_{>0}$.  By Theorem~\ref{theorem:nogapgroup},
  there exists a finitely presented group~$G$ and an integral
  class~$\alpha \in H_2(G;\R)$ with
  $0 < \|\alpha\|_1 \leq \epsilon.
  $ 
  Let $X$ be a model of~$BG$. Then the product class
  \[ \alpha' := \alpha \times [N_d]_\R \in H_d(X \times N_d;\R)
  \]
  is integral and satisfies (by Proposition~\ref{prop:l1productgeneric})
  $$
  0 < \|\alpha\|_1 \cdot \|N_d\| \leq \| \alpha' \|_1 \leq {d \choose 2} \cdot \|N_d \| \cdot \epsilon.  
  $$
  
  The normed version of Thom's realisation theorem (Theorem~\ref{thm:thom})
  provides an orientable closed connected $d$-manifold~$M$
  and a number~$m \in \{1,\dots, K_d\}$ with
  \[ \|M \| = m \cdot \|\alpha'\|_1.
  \]
  We conclude that
  $$
  0 < \| M \| \leq {d \choose 2} \cdot \|N_d \| \cdot K_d \cdot \epsilon.
  $$
  As the constants on the right hand side just depend on $d$, this shows that there is no gap at $0$ in $\SV(d)$, the set of simplicial volumes of orientable closed connected $d$-manifolds.
  By additivity (Remark~\ref{rem:svsetbasics}), the set~$\SV(d)$ is also dense in~$\R_{\geq 0}$.
\end{proof}

\subsection{Dimension~$4$: Proofs of Theorems \ref{theorem:simvolQ} and \ref{theorem:exact4mfd}}\label{subsec:proofB}

In dimension~$4$, we have more control on the $l^1$-norm of integral
$4$-classes in group homology (Theorem \ref{theorem:prodnorm}).  This
allows us to prove Theorems~\ref{theorem:simvolQ}
and~\ref{theorem:exact4mfd}.

\begin{prop}\label{prop:simvolfromgroups}
  Let $G$ be a finitely presented group and let $\alpha \in H_2(G;\R)$
  be an integral class. Then there exists an orientable closed connected
  $4$-manifold~$M_\alpha$ with
  \[ \| M_\alpha \| = 6 \cdot \|\alpha\|_1.
  \]
\end{prop}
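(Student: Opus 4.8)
The strategy is to combine the exact product-norm computation (Corollary~\ref{cor:prodnormsurf}) with the normed Thom realisation theorem (Theorem~\ref{thm:thom}), using that $K_4 = 1$. First I would fix an oriented closed connected surface of genus~$2$, say~$\Sigma_2$, with fundamental group~$\Gamma_2$ and fundamental class~$[\Sigma_2]_\R \in H_2(\Gamma_2;\R)$; recall~$\|\Sigma_2\| = 4$ and that the boundary action~$\rho \colon \Gamma_2 \to \Homeo(S^1)$ furnishes an extremal cocycle~$\rho^*\Or$ for~$[\Sigma_2]_\R$. Let $X$ be a model of~$BG$. Then the cross-product
\[
  \alpha' := \alpha \times [\Sigma_2]_\R \in H_4(X \times \Sigma_2;\R)
\]
is again an integral class, since it is the image under the cross-product map in homology of a product of integral classes (and $X \times \Sigma_2$ is a model of $B(G \times \Gamma_2)$, with $G \times \Gamma_2$ finitely presented as a product of finitely presented groups).

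Next I would apply Corollary~\ref{cor:prodnormsurf} with $g = 2$ to compute the $l^1$-semi-norm of~$\alpha'$ exactly:
\[
  \|\alpha'\|_1 = 6 \cdot (2-1) \cdot \|\alpha\|_1 = 6 \cdot \|\alpha\|_1.
\]
Now invoke the normed Thom realisation theorem (Theorem~\ref{thm:thom}) in dimension~$d = 4$, where the constant may be chosen as~$K_4 = 1$: there is an oriented closed connected $4$-manifold~$M_\alpha$, a continuous map~$f \colon M_\alpha \to X \times \Sigma_2$, and a multiplier~$m \in \{1, \dots, K_4\} = \{1\}$, hence $m = 1$, with
\[
  H_4(f;\R)[M_\alpha]_\R = \alpha'
  \qand
  \|M_\alpha\| = 1 \cdot \|\alpha'\|_1 = 6 \cdot \|\alpha\|_1,
\]
which is exactly the claimed identity.

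The argument is essentially an assembly of earlier results, so there is no deep obstacle; the only point requiring care is that the product class~$\alpha'$ really is \emph{integral} (so that Thom realisation applies), which follows because the homological cross-product commutes with the change-of-coefficients map $\Z \to \R$, and that the hypothesis of Corollary~\ref{cor:prodnormsurf} is met on the nose by the genus-$2$ surface with its boundary circle action. The crucial leverage — and the reason dimension~$4$ behaves so well here — is that the multiplier~$m$ from Thom's theorem is forced to equal~$1$ in this dimension, so the semi-norm is transported without any loss of control.
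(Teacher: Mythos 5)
Your proposal is correct and follows essentially the same route as the paper: form the integral product class~$\alpha \times [\Sigma_2]_\R$, compute its $l^1$-semi-norm as~$6 \cdot \|\alpha\|_1$ via Corollary~\ref{corr:exact 4-classes as product with surface}, and realise it by the normed Thom theorem with~$K_4 = 1$. Your extra remarks (integrality of the cross-product, finite presentability of~$G \times \Gamma_2$, the multiplier being forced to~$1$) are exactly the points the paper uses, just spelled out more explicitly.
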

\begin{proof}
  We proceed as in the proof of Theorem~\ref{theorem:nogap} 
  and consider the product class
  \[ \alpha' := \alpha \times [\Sigma_2]_\R \in H_4(G \times \Gamma_2;\R)
  \]
of $\alpha$ with the fundamental class $[\Sigma_2]_{\R}$ of a surface of genus $2$. Observe that $\alpha'$ is also integral. Then the normed Thom realisation (Theorem~\ref{thm:thom})
  shows that there exists an orientable closed connected $4$-manifold~$M_\alpha$
  with~$\| M_\alpha\| = \|\alpha'\|_1$. We now apply the norm computation from
  Corollary~\ref{corr:exact 4-classes as product with surface} and obtain
  \[ \|M_\alpha\| = \| \alpha'\|_1 
           = 6 \cdot \|\alpha\|_1.
	   \qedhere 
  \]
\end{proof}

\begin{proof}[Proof of Theorem~\ref{theorem:simvolQ}]
  We only need to combine Theorem~\ref{theorem:nogapgroup} (which
  allows to realise any non-negative rational number as
  $l^1$-semi-norm of an integral $2$-class of a finitely presented
  group) with Proposition~\ref{prop:simvolfromgroups}.
\end{proof}

Moreover, we can summarise the relation between stable commutator length and simplicial volumes
in dimension~$4$ as follows:

\begin{corr}[Theorem \ref{theorem:exact4mfd}, dimension~$4$, exact values via~$\scl$]\label{cor:exact4}
  Let $G$ be a finitely presented group with~$H_2(G;\R) \cong 0$ and let $g \in G'$ be an element in the commutator subgroup.
  Then there exists an orientable closed connected $4$-manifold~$M_g$ with
  \[ \|M_g\| = 48 \cdot \scl_G g.
  \]
\end{corr}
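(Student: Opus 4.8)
The plan is to combine the degree-$2$ doubling computation with the four-dimensional realisation result, after disposing of the degenerate case separately. First I would observe that if $g$ has finite order, then $\scl_G g = 0$, and one may simply take $M_g := S^4$ (any rational homology $4$-sphere works), for which $\|M_g\| = 0 = 48 \cdot \scl_G g$. So from now on I would assume that $g$ has infinite order.

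Next I would form the double $G_g := D(G,g)$ of Setup~\ref{set:decomprel1}. Since $G$ is finitely presented, so is $G_g$, and by Remark~\ref{rem:decomprel1integral} there is a canonical \emph{integral} class $\alpha_g \in H_2(G_g;\R)$. Applying Corollary~\ref{cor:double} (which uses the hypothesis $H_2(G;\R)\cong 0$ and that $g$ has infinite order) gives
\[
  \| \alpha_g \|_1 = 8 \cdot \scl_G g .
\]
Then I would feed the finitely presented group $G_g$ together with the integral class $\alpha_g$ into Proposition~\ref{prop:simvolfromgroups}: this produces an orientable closed connected $4$-manifold $M_g$ with
\[
  \| M_g \| = 6 \cdot \| \alpha_g \|_1 = 48 \cdot \scl_G g ,
\]
which is exactly the claim. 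Internally, Proposition~\ref{prop:simvolfromgroups} crosses $\alpha_g$ with the fundamental class of a genus-$2$ surface, realises the resulting $4$-class by a manifold via the normed Thom theorem~\ref{thm:thom} with $K_4 = 1$, and evaluates the product norm using Corollary~\ref{corr:exact 4-classes as product with surface}; none of this needs $H_2(G_g;\R)$ to vanish.

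I expect essentially no serious obstacle: the statement is a formal consequence of results already established (Corollary~\ref{cor:double} for the input $2$-class, Proposition~\ref{prop:simvolfromgroups} for the passage to a $4$-manifold). The only point requiring a moment's care is the degenerate case $\scl_G g = 0$ — in particular when $g$ is torsion, where the infinite-order hypothesis of the doubling construction fails — and this is handled trivially by exhibiting an explicit volume-zero $4$-manifold as above.
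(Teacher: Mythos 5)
Your proposal is correct and follows exactly the paper's argument: dispose of the torsion case via $M_g := S^4$, then double to $D(G,g)$, apply Corollary~\ref{cor:double} to get $\|\alpha_g\|_1 = 8 \cdot \scl_G g$, and feed the integral class into Proposition~\ref{prop:simvolfromgroups}. Only the parenthetical aside that ``any rational homology $4$-sphere works'' should be dropped, since a rational homology sphere can have positive simplicial volume (e.g.\ a hyperbolic one); the choice $S^4$ itself is of course fine.
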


\begin{proof}
 We may assume without loss of generality that $r$ has infinite order
 (otherwise we can just take~$M = S^4$).  
 We again consider the doubled group~$D(G,r)$ (as in Corollary~\ref{cor:double})
 and the canonical homology class~$\alpha \in H_2(D(G,r);\R)$, which is integral
 (Remark~\ref{rem:decomprel1integral}); as $G$ is finitely presented, also $D(G,r)$
 is finitely presented. Applying Corollary~\ref{cor:double} shows that
 \[ \|\alpha\|_1 = 8 \cdot \scl_G r.
 \]
 In combination with Proposition~\ref{prop:simvolfromgroups}, we therefore obtain
 an orientable closed connected $4$-manifold~$M$ with
 \[ \| M \|  = 6 \cdot \|\alpha\|_1 = 48 \cdot \scl_G r.
 \qedhere
 \]
\end{proof}
 
\begin{rmk}
  The concrete example manifolds in the proof of Theorem~\ref{theorem:simvolQ}, in
  general, might have different fundamental group; however, by
  construction, their first Betti numbers are uniformly bounded: For each~$q\in \Q$,
  we have
  \[ b_1 (M_q;\Q) \leq \rk \pi_1(M_q) = \rk \pi_1\bigl(D(E,e_q) \times \Gamma_2\bigr) \leq 2 \cdot \rk E + 4.
  \]
\end{rmk}

\subsection{Related problems} \label{subsec:related problems}


The techniques of this paper may be adopted to construct $4$-manifolds with transcendental simplicial volume. Theorem~\ref{theorem:exact4mfd} reduces this problem to finding an appropriate finitely presented group with transcendental stable commutator length.
By constructing such groups explicitly, we could show that there are $4$-manifolds with arbitrarily small transcendental simplicial volume~\cite[Theorem~A]{trans_simvol}. Moreover, we could also show that the set of simplicial volumes is contained in the (countable) set~$\mbox{RC}^{\geq 0}$ of non-negative right-computable numbers~\cite[Theorem~B]{trans_simvol}.
It is unkown which real numbers arise as the stable commutator length of elements in the class of finitely presented groups. However, it is known~\cite{rp_scl} that for the class of recursively presented groups this set is exactly~$\mbox{RC}^{\geq 0}$.


Our techniques for manufacturing manifolds with controlled simplicial
volumes are based on group-theoretic methods and not on genuine
manifold-geo\-metric constructions. One might wonder whether
Theorem~\ref{theorem:nogap} also holds under additional topological or
geometric conditions such as asphericity or curvature conditions.


Originally, we set out to study simplicial volume of one-relator
groups and its relation with stable commutator length. However, we
then realised that some of the techniques applied in a much broader
context (with a weak homological condition). We discuss a connection
between the $l^1$-semi-norm of the relator-class in one-relator groups 
with the stable commutator length of the relator in the free group
in a separate article~\cite{heuerloeh_onerel}.

{\small
\bibliographystyle{alpha}
\bibliography{bib_l1}}

\newcommand{\etalchar}[1]{$^{#1}$}
\begin{thebibliography}{{Heu}19a}

\bibitem[Bav91]{Bavard}
Christophe Bavard.
\newblock Longueur stable des commutateurs.
\newblock {\em Enseign.\ Math.~(2)}, 37(1-2):109--150, 1991.

\bibitem[BBF{\etalchar{+}}14]{bbfipp}
M.~Bucher, M.~Burger, R.~Frigerio, A.~Iozzi, C.~Pagliantini, and M.~B.
  Pozzetti.
\newblock Isometric embeddings in bounded cohomology.
\newblock {\em J.~Topol.\ Anal.}, 6(1):1--25, 2014.

\bibitem[BBF16]{BBF}
Mladen Bestvina, Ken Bromberg, and Koji Fujiwara.
\newblock Stable commutator length on mapping class groups.
\newblock {\em Ann. Inst. Fourier (Grenoble)}, 66(3):871--898, 2016.

\bibitem[BC91]{bensoncohen}
D.~J. Benson and F.~R. Cohen.
\newblock Mapping class groups of low genus and their cohomology.
\newblock {\em Mem.\ Amer.\ Math.\ Soc.}, 90(443):iv+104, 1991.

\bibitem[BCL18]{bucherconnelllafont}
Michelle Bucher, Chris Connell, and Jean-Fran\c{c}ois Lafont.
\newblock Vanishing simplicial volume for certain affine manifolds.
\newblock {\em Proc.\ Amer.\ Math.\ Soc.}, 146(3):1287--1294, 2018.

\bibitem[BCP01]{boedigheimercohenpeim}
C.-F. B\"{o}digheimer, F.~R. Cohen, and M.~D. Peim.
\newblock Mapping class groups and function spaces.
\newblock In {\em Homotopy methods in algebraic topology ({B}oulder, {CO},
  1999)}, volume 271 of {\em Contemp.\ Math.}, pages 17--39. Amer.\ Math.\
  Soc., Providence, RI, 2001.

\bibitem[BFH16]{BFH}
Michelle Bucher, Roberto Frigerio, and Tobias Hartnick.
\newblock A note on semi-conjugacy for circle actions.
\newblock {\em Enseign. Math.}, 62(3-4):317--360, 2016.

\bibitem[BG88]{bargeghys}
Jean Barge and \'{E}tienne Ghys.
\newblock Surfaces et cohomologie born\'{e}e.
\newblock {\em Invent.\ Math.}, 92(3):509--526, 1988.

\bibitem[BH71]{birmanhilden}
Joan~S. Birman and Hugh~M. Hilden.
\newblock On the mapping class groups of closed surfaces as covering spaces.
\newblock pages 81--115. Ann.\ of Math.\ Studies, No.~66, 1971.

\bibitem[BK08]{bucherprodsurf}
Michelle Bucher-Karlsson.
\newblock The simplicial volume of closed manifolds covered by {$\Bbb
  H^2\times\Bbb H^2$}.
\newblock {\em J. Topol.}, 1(3):584--602, 2008.

\bibitem[BM02]{BurgerMonod}
Marc Burger and Nicolas Monod.
\newblock Continuous bounded cohomology and applications to rigidity theory.
\newblock {\em Geom.\ Funct.\ Anal.}, 12(2):219--280, 2002.

\bibitem[BN20]{bucherneofytidis}
Michelle Bucher and Christoforos Neofytidis.
\newblock The simplicial volume of mapping tori of $3$-manifolds.
\newblock {\em Math.\ Ann.}, 376(3-4):1429--1447, 2020.

\bibitem[BP92]{benedettipetronio}
Riccardo Benedetti and Carlo Petronio.
\newblock {\em Lectures on Hyperbolic Geometry}.
\newblock Universitext. Springer-Verlag, Berlin, 1992.

\bibitem[Buc09]{bucherbundles}
Michelle Bucher.
\newblock Simplicial volume of products and fiber bundles.
\newblock In {\em Discrete groups and geometric structures}, volume 501 of {\em
  Contemp.\ Math.}, pages 79--86. Amer. Math. Soc., Providence, RI, 2009.

\bibitem[Cal07]{calegari_pl_inverval}
Danny Calegari.
\newblock Stable commutator length in subgroups of {${\rm PL}^+(I)$}.
\newblock {\em Pacific J. Math.}, 232(2):257--262, 2007.

\bibitem[Cal08a]{calegari_surffromhom}
Danny Calegari.
\newblock Surface subgroups from homology.
\newblock {\em Geom.\ Topol.}, 12(4):1995--2007, 2008.

\bibitem[Cal08b]{whatisscl}
Danny Calegari.
\newblock What is{$\ldots$} stable commutator length?
\newblock {\em Notices Amer.\ Math.\ Soc.}, 55(9):1100--1101, 2008.

\bibitem[Cal09a]{Calegari}
Danny Calegari.
\newblock {\em scl}, volume~20 of {\em MSJ Memoirs}.
\newblock Mathematical Society of Japan, Tokyo, 2009.

\bibitem[Cal09b]{Calegari_rational}
Danny Calegari.
\newblock Stable commutator length is rational in free groups.
\newblock {\em J.~Amer.\ Math.\ Soc.}, 22(4):941--961, 2009.

\bibitem[Cal11]{Calegari_sss}
Danny Calegari.
\newblock scl, sails, and surgery.
\newblock {\em J.~Topol.}, 4(2):305--326, 2011.

\bibitem[CF10]{CalegariFujiwara}
Danny Calegari and Koji Fujiwara.
\newblock Stable commutator length in word-hyperbolic groups.
\newblock {\em Groups Geom. Dyn.}, 4(1):59--90, 2010.

\bibitem[CFL16]{CFL}
Matt Clay, Max Forester, and Joel Louwsma.
\newblock Stable commutator length in {B}aumslag-{S}olitar groups and
  quasimorphisms for tree actions.
\newblock {\em Trans. Amer. Math. Soc.}, 368(7):4751--4785, 2016.

\bibitem[CFP96]{CannonFloyd}
J.~W. Cannon, W.~J. Floyd, and W.~R. Parry.
\newblock Introductory notes on {R}ichard {T}hompson's groups.
\newblock {\em Enseign. Math. (2)}, 42(3-4):215--256, 1996.

\bibitem[CH19]{ChenHeuer}
Lvzhou Chen and Nicolaus Heuer.
\newblock Spectral gap of scl in graphs of groups and $3$-manifolds.
\newblock 2019.
\newblock \textsf{arXiv:1910.14146 [math.GT]}.

\bibitem[Che18]{Chen}
Lvzhou Chen.
\newblock Scl in free products.
\newblock {\em Algebr. Geom. Topol.}, 18(6):3279--3313, 2018.

\bibitem[CL15]{crowleyloeh}
Diarmuid Crowley and Clara L\"{o}h.
\newblock Functorial seminorms on singular homology and (in)flexible manifolds.
\newblock {\em Algebr.\ Geom.\ Topol.}, 15(3):1453--1499, 2015.

\bibitem[CM05]{condermaclachlan}
Marston Conder and Colin Maclachlan.
\newblock Compact hyperbolic $4$-manifolds of small volume.
\newblock {\em Proc.\ Amer.\ Math.\ Soc.}, 133(8):2469--2476, 2005.

\bibitem[CMS14]{calegarimondensato}
Danny Calegari, Naoyuki Monden, and Masatoshi Sato.
\newblock On stable commutator length in hyperelliptic mapping class groups.
\newblock {\em Pacific J.~Math.}, 272(2):323--351, 2014.

\bibitem[DH91]{DuncanHowie}
Andrew~J. Duncan and James Howie.
\newblock The genus problem for one-relator products of locally indicable
  groups.
\newblock {\em Math.~Z.}, 208(2):225--237, 1991.

\bibitem[EK01]{endokotschick}
H.~Endo and D.~Kotschick.
\newblock Bounded cohomology and non-uniform perfection of mapping class
  groups.
\newblock {\em Invent.\ Math.}, 144(1):169--175, 2001.

\bibitem[FL19]{fauserloehvarubc}
Daniel Fauser and Clara L{\"o}h.
\newblock Variations on the theme of the uniform boundary condition.
\newblock {\em J.~Topol.\ Anal.}, to appear, 2019.

\bibitem[FM18]{frigeriomoraschini}
Roberto Frigerio and Marco Moraschini.
\newblock Gromov's theory of multicomplexes with applications to bounded
  cohomology and simplicial volume.
\newblock {\em Mem.\ Amer.\ Math.\ Soc.}, to appear, 2018.
\newblock \textsf{arXiv:1808.07307 [math.GT]}.

\bibitem[Fri17]{Frigerio}
Roberto Frigerio.
\newblock {\em Bounded cohomology of discrete groups}, volume 227 of {\em
  Mathematical Surveys and Monographs}.
\newblock American Mathematical Society, Providence, RI, 2017.

\bibitem[Ghy87]{Ghys_circle}
\'{E}tienne Ghys.
\newblock Groupes d'hom\'{e}omorphismes du cercle et cohomologie born\'{e}e.
\newblock In {\em The {L}efschetz centennial conference, {P}art {III} ({M}exico
  {C}ity, 1984)}, volume~58 of {\em Contemp. Math.}, pages 81--106. Amer. Math.
  Soc., Providence, RI, 1987.

\bibitem[GMM09]{gabaimeyerhoffmilley}
David Gabai, Robert Meyerhoff, and Peter Milley.
\newblock Minimum volume cusped hyperbolic three-manifolds.
\newblock {\em J.~Amer.\ Math.\ Soc.}, 22(4):1157--1215, 2009.

\bibitem[Gro82]{vbc}
Michael Gromov.
\newblock Volume and bounded cohomology.
\newblock {\em Inst.\ Hautes \'{E}tudes Sci.\ Publ.\ Math.}, (56):5--99 (1983),
  1982.

\bibitem[Gro99]{gromov_metric}
Misha Gromov.
\newblock {\em Metric structures for {R}iemannian and non-{R}iemannian spaces},
  volume 152 of {\em Progress in Mathematics}.
\newblock Birkh\"{a}user Boston, Inc., Boston, MA, 1999.
\newblock Based on the 1981 French original, With appendices by M.~Katz,
  P.~Pansu and S.~Semmes, Translated from the French by Sean Michael Bates.

\bibitem[GS87]{GS}
\'{E}tienne Ghys and Vlad Sergiescu.
\newblock Sur un groupe remarquable de diff\'{e}o\-morphismes du cercle.
\newblock {\em Comment. Math. Helv.}, 62(2):185--239, 1987.

\bibitem[{Heu}19a]{rp_scl}
Nicolaus {Heuer}.
\newblock The full spectrum of scl on recursively presented groups.
\newblock 2019.
\newblock \textsf{arXiv:1909.01309 [math.GR]}.

\bibitem[Heu19b]{Heuer}
Nicolaus Heuer.
\newblock Gaps in scl for amalgamated free products and {RAAG}s.
\newblock {\em Geom.\ Funct.\ Anal.}, 29(1):198--237, 2019.

\bibitem[HK01]{hosterkotschick}
M.~Hoster and D.~Kotschick.
\newblock On the simplicial volumes of fiber bundles.
\newblock {\em Proc.\ Amer.\ Math.\ Soc.}, 129(4):1229--1232, 2001.

\bibitem[HL19a]{heuerloeh_onerel}
Nicolaus Heuer and Clara L{\"o}h.
\newblock Simplicial volume of one-relator groups and stable commutator length.
\newblock 2019.
\newblock \textsf{arXiv:1911.02470 [math.GT]}.

\bibitem[HL19b]{trans_simvol}
Nicolaus {Heuer} and Clara {L{\"o}h}.
\newblock {Transcendental simplicial volumes}.
\newblock 2019.
\newblock \textsf{arXiv:1911.06386 [math.GT]}.

\bibitem[Iva85]{ivanov}
N.~V. Ivanov.
\newblock Foundations of the theory of bounded cohomology.
\newblock {\em Zap.\ Nauchn.\ Sem.\ Leningrad.\ Otdel.\ Mat.\ Inst.\ Steklov.
  (LOMI)}, 143:69--109, 177--178, 1985.
\newblock Studies in topology,~V.

\bibitem[Iva17]{ivanovTNG}
N.~V. Ivanov.
\newblock Notes on the bounded cohomology theory.
\newblock 2017.
\newblock \textsf{arXiv:1708.05150 [math.AT]}.

\bibitem[IY82]{inoueyano}
Hisao Inoue and Koichi Yano.
\newblock The {G}romov invariant of negatively curved manifolds.
\newblock {\em Topology}, 21(1):83--89, 1982.

\bibitem[Kaw97]{kawazumi}
Nariya Kawazumi.
\newblock Homology of hyperelliptic mapping class groups for surfaces.
\newblock {\em Topology Appl.}, 76(3):203--216, 1997.

\bibitem[LS06]{lafontschmidt}
Jean-Fran\c{c}ois Lafont and Benjamin Schmidt.
\newblock Simplicial volume of closed locally symmetric spaces of non-compact
  type.
\newblock {\em Acta Math.}, 197(1):129--143, 2006.

\bibitem[Mat65]{mathercounting}
Michael Mather.
\newblock Counting homotopy types of manifolds.
\newblock {\em Topology}, 3:93--94, 1965.

\bibitem[MM85]{matsumotomorita}
Shigenori Matsumoto and Shigeyuki Morita.
\newblock Bounded cohomology of certain groups of homeomorphisms.
\newblock {\em Proc.\ Amer.\ Math.\ Soc.}, 94(3):539--544, 1985.

\bibitem[Mon12]{monden}
Naoyuki Monden.
\newblock On upper bounds on stable commutator lengths in mapping class groups.
\newblock {\em Topology Appl.}, 159(4):1085--1091, 2012.

\bibitem[NT07]{thomtoplib}
S.~P. Novikov and I.~A. Taimanov, editors.
\newblock {\em Topological library. {P}art~1: cobordisms and their
  applications}, volume~39 of {\em Series on Knots and Everything}.
\newblock World Scientific Publishing Co.\ Pte.\ Ltd., Hackensack, NJ, 2007.
\newblock Chapter~2: Translation of the original paper~\cite{thomoriginal} by
  V.~O.~Manturov.

\bibitem[Sam99]{sambusetti}
Andrea Sambusetti.
\newblock Minimal entropy and simplicial volume.
\newblock {\em Manuscripta Math.}, 99(4):541--560, 1999.

\bibitem[Som81]{soma}
Teruhiko Soma.
\newblock The {G}romov invariant of links.
\newblock {\em Invent. Math.}, 64(3):445--454, 1981.

\bibitem[Tho54]{thomoriginal}
Ren\'{e} Thom.
\newblock Quelques propri\'{e}t\'{e}s globales des vari\'{e}t\'{e}s
  diff\'{e}rentiables.
\newblock {\em Comment. Math. Helv.}, 28:17--86, 1954.

\bibitem[Thu97]{thurston}
William~P. Thurston.
\newblock {\em Three-dimensional geometry and topology. {V}ol.~1}, volume~35 of
  {\em Princeton Mathematical Series}.
\newblock Princeton University Press, Princeton, NJ, 1997.
\newblock Edited by Silvio Levy.

\bibitem[Wal13]{Walker_cyclic}
Alden Walker.
\newblock Stable commutator length in free products of cyclic groups.
\newblock {\em Exp. Math.}, 22(3):282--298, 2013.

\bibitem[Wei94]{weibel}
Charles~A. Weibel.
\newblock {\em An introduction to homological algebra}, volume~38 of {\em
  Cambridge Studies in Advanced Mathematics}.
\newblock Cambridge University Press, Cambridge, 1994.

\bibitem[Yan82]{yano}
Koichi Yano.
\newblock Gromov invariant and {$S^{1}$}-actions.
\newblock {\em J.~Fac.\ Sci.\ Univ.\ Tokyo Sect.\ IA Math.}, 29(3):493--501,
  1982.

\bibitem[Zhu08]{zhuang}
Dongping Zhuang.
\newblock Irrational stable commutator length in finitely presented groups.
\newblock {\em J. Mod. Dyn.}, 2(3):499--507, 2008.

\end{thebibliography}

\vfill

\noindent
\emph{Nicolaus Heuer}\\[.5em]
  {\small
  \begin{tabular}{@{\qquad}l}
DPMMS, University of Cambridge\\
    \textsf{nh441@cam.ac.uk},
    \textsf{https://www.dpmms.cam.ac.uk/$\sim$nh441/} 
  \end{tabular}}

\medskip

\noindent
\emph{Clara L\"oh}\\[.5em]
  {\small
  \begin{tabular}{@{\qquad}l}
    Fakult\"at f\"ur Mathematik,
    Universit\"at Regensburg,
    93040 Regensburg\\
    \textsf{clara.loeh@mathematik.uni-r.de}, 
    \textsf{http://www.mathematik.uni-r.de/loeh}
  \end{tabular}}

\end{document}